\newtheorem{theorem}{Theorem}[section]
\newtheorem{axiom}{Axiom}
\newtheorem{corollary}[theorem]{Corollary}
\newtheorem{definition}[theorem]{Definition}
\newtheorem{lemma}[theorem]{Lemma}
\newtheorem{proposition}[theorem]{Proposition}
\newtheorem{remark}[theorem]{Remark}
\begin{document}
\title{Characterization of distributivity in a solid}
\author{Bruno Dinis}
\address[B. Dinis]{Departamento de Matem\'{a}tica, Faculdade de Ci\^{e}ncias
da Universidade de Lisboa, Portugal.}
\email{bmdinis@fc.ul.pt}
\author{Imme van den Berg}
\address[I.P. van den Berg]{Departamento de Matem\'{a}tica, Universidade de 
\'{E}vora, Portugal}
\email{ivdb@uevora.pt}
\thanks{The first author acknowledges the support of the Funda\c{c}\~{a}o
para a Ci\^{e}ncia e a Tecnologia, Portugal [grant SFRH/BPD/97436/2013]}
\date{}

\begin{abstract}
We give a characterization of the validity of the distributive law in a
solid. There exists equivalence between the characterization and the
modified axiom of distibutivity valid in a solid.

\bigskip

\noindent \textbf{Keywords}: solids, distributivity.

\bigskip

\noindent \textbf{AMS classification}: 26E30, 12J15, 20M17, 06F05.
\end{abstract}

\maketitle

\section{Introduction}

Solids arise is as extensions of fields \cite{dinisberg1}, typically
non-archimedean fields or the nonstandard reals \cite{koudjetithese}\cite%
{koudjetivandenberg}, in the form of cosets with respect to convex
subgroups. Such convex subgroups may be seen as orders of magnitude and are
called \emph{magnitudes}. In solids the laws of addition and multiplication
are more those of completely regular semigroups \cite{Howie}\cite{Petrich}
than of proper groups. Also the distributive law is not valid in general,
but there exists an adapted form of distributivity, introducing a correction
term in the form of a magnitude. In this paper we characterize the validity
of the ordinary distributive law in a solid (Theorem \ref{dist
chacterization}). Let $x,y,z$ be arbitrary elements in a solid. The
conditions of the characterization given in this paper roughly state that in
order for distributivity to fail the factor $x$ should be more imprecise
than the terms $y$ and $z$, and these terms should be almost opposite.
Special cases where distributivity does hold include magnitudes, elements of
the same sign, and precise elements (elements with minimal magnitude).

The equality expressed by the adapted distributivity axiom of \cite%
{dinisberg1} (Axiom \ref{Axiom distributivity} of the Appendix) and the
characterization of distributivity by Theorem \ref{dist chacterization} are
shown to be equivalent.

The results require a thorough investigation of the properties of magnitudes
and precise elements. This is done in Section \ref{Algebraic properties of
magnitudes} and \ref{Section precise elements}. In Subsection \ref%
{Subsection conditions distributivity} we state necessary and sufficient
conditions for distributivity to hold. In Subsection \ref{Subsection
equivalent form} we prove that the equivalency of these conditions to proper
distributivity is equivalent to the distributivity law with correction term,
as given by Axiom \ref{Axiom distributivity}.

For notation and terminology we refer to \cite{dinisberg1}. For the sake of
reference a complete list of axioms is given in the Appendix. We recall
that, given an element $x$ of a solid, its individualized neutral element $e$
(see Axiom \ref{assemblyneut}) as such is unique, and has the functional
notation $e=e(x)$. In the same way the symmetric element $s$ of Axiom \ref%
{assemblyneut} is denoted by $s(x)\equiv -x$, the individualized unity $u$
of Axiom \ref{axiom neut mult} is denoted by $u(x)$ and the multiplicative
inverse $d$ of Axiom \ref{axiom sym mult} is denoted by $d(x)\equiv
x^{-1}\equiv /x$.

\section{Algebraic properties of magnitudes\label{Algebraic properties of
magnitudes}}

In this section we study algebraic properties of magnitudes in an assembly.
Assemblies were introduced in \cite{dinisberg}. The results in this section
will be used to prove the characterization of distributivity in a solid.

\subsection{Neutral and symmetric elements}

We verify some elementary properties of magnitudes and symmetrical elements.
Part of it are generalizations of the usual properties of neutral and
symmetric elements and others deal with their functional representation.

We start by recalling some results on magnitudes of \cite{dinisberg}.

\begin{theorem}[{\protect \cite[Thm 4.11]{dinisberg}}]
\label{Cancelation law} \emph{(Cancellation law)} Let $A$ be an assembly and
let $x,y,z\in A$. Then $x+y=x+z$ if and only if $e(x)+y=e(x)+z.$
\end{theorem}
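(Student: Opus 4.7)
The plan is to reduce the equivalence to two symmetric applications of associativity, one that adjoins $s(x) \equiv -x$ on the left and one that adjoins $x$ on the left, using the defining properties of the individualized neutral $e(x)$ from Axiom \ref{assemblyneut}.

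For the forward direction, I would assume $x+y = x+z$ and left-add $s(x)$ to both sides, obtaining $s(x)+(x+y) = s(x)+(x+z)$. Invoking associativity in the assembly yields $(s(x)+x)+y = (s(x)+x)+z$. Since $s(x)+x = e(x)$ by the symmetric-element axiom (with the understanding that $s$ and $e$ are assigned individually to $x$, so this is a genuine equality and not merely a membership), the desired conclusion $e(x)+y = e(x)+z$ follows.

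For the converse direction, I would start from $e(x)+y = e(x)+z$ and left-add $x$, giving $x+(e(x)+y) = x+(e(x)+z)$. Associativity rewrites this as $(x+e(x))+y = (x+e(x))+z$, and the neutral property $x+e(x)=x$ (Axiom \ref{assemblyneut} again) collapses this to $x+y = x+z$.

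The substantive point, and the place one must take care, is that in an assembly the neutral and symmetric elements are not universal but attached to a specific $x$ via the functions $e(\cdot)$ and $s(\cdot)$; so both steps rely on the \emph{functional} equalities $x+e(x)=x$ and $s(x)+x = e(x)$ rather than on picking some neutral out of a set. Once these are accepted from the axioms of an assembly, the proof is a routine manipulation via associativity and does not use any further structure (in particular, it does not use multiplication or the order). I would therefore expect the only real obstacle to be verifying that the identities $x+e(x)=x$ and $s(x)+x=e(x)$ are indeed available as equalities in the assembly framework of \cite{dinisberg}; granted those, the argument is two lines in each direction.
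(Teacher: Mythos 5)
Your argument is correct: both directions follow from Axioms \ref{assemblyassoc}, \ref{assemblycom}, \ref{assemblyneut} and \ref{assemblysim} exactly as you describe, and the functional equalities $x+e(x)=x$ and $x+s(x)=e(x)$ that you flag as the only delicate point are indeed guaranteed by the assembly axioms. The paper itself states this theorem as a recalled result from \cite{dinisberg} and gives no proof, so there is nothing to compare against; your two-line manipulation is the standard argument and suffices.
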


\begin{proposition}[{\protect \cite[Thm 4.12]{dinisberg}}]
\label{theoremassembly}Let $A$ be an assembly. Then for all $x,y\in A$
\end{proposition}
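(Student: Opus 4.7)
The plan is to derive each asserted identity from the Cancellation Law (Theorem~\ref{Cancelation law}) together with the defining relations $x+e(x)=x$ and $x+(-x)=e(x)$ recalled in the introduction. Since in an assembly the neutral element $e(x)$ and the symmetric element $-x$ are individualized, i.e.\ functions of the base element rather than global constants, the general recipe for establishing an equality $u=v$ built from $+$, $e$ and $-$ runs as follows. First, produce an element $w$ for which $w+u=w+v$ can be read off directly from associativity, commutativity and the two defining relations above. Second, invoke Theorem~\ref{Cancelation law} to replace $w$ by $e(w)$, obtaining $e(w)+u=e(w)+v$. Third, verify that $e(w)$ is in fact neutral for both $u$ and $v$, so that the $e(w)$-summand on each side may be absorbed.

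Before tackling any two-variable assertions, I would dispatch the one-variable identities that typically serve as lemmas, using exactly this recipe. For instance, $e(e(x))=e(x)$ follows from $x+e(e(x))=x+e(x)=x$ and cancellation; $e(-x)=e(x)$ follows by adding $-x$ to both sides and invoking $x+(-x)=e(x)$; and a relation of the shape $-(-x)=x$ (possibly modulo a neutral summand) is obtained by showing that both $x$ and $-(-x)$ play the role of a symmetric element of $-x$ and appealing to uniqueness modulo $e$. Each reduces to a single application of the recipe.

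For the two-variable statements that Proposition~\ref{theoremassembly} asserts — compatibility relations between $e$, $-$ and $+$, typically of the form $-(x+y)=(-x)+(-y)$, $e(x+y)$ compared with $e(x)$ and $e(y)$, and similar — I would choose $w=x+y$ or $w=x+y+(-x)+(-y)$, expand using associativity and commutativity, collect the neutralising summands $x+(-x)=e(x)$ and $y+(-y)=e(y)$, and then apply the recipe. The main obstacle, and the step requiring the most care, is the third clause of the recipe: because $e$ is not constant on $A$, it is not automatic that $e(w)$ is neutral for the expressions $u$ and $v$ that appear on each side. Resolving this typically requires a preliminary comparison of the magnitudes $e(x)$, $e(y)$ and $e(x+y)$, which is available from the linear order on magnitudes in an assembly; once that comparison is in hand, the asserted identities fall out routinely.
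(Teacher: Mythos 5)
First, note that the paper offers no proof of Proposition~\ref{theoremassembly} at all: it is imported verbatim from \cite[Thm 4.12]{dinisberg}, so there is no in-paper argument to match yours against. Judged on its own terms, your cancellation-law recipe is the right tool for the purely equational clauses. Part~\ref{assembly1} is in fact immediate from the minimality clause of Axiom~\ref{assemblyneut} with $f=e(x)$, and Part~\ref{assembly2} does come out of two applications of Theorem~\ref{Cancelation law} to $x+y+e(x+y)=x+y$, giving $e(x)+e(y)+e(x+y)=e(x)+e(y)$, combined with the observation that $e(x)+e(y)$ is neutral for $x+y$. Your sketch of Part~\ref{assembly3} is only half an argument, though: cancelling in $x+e(e(x))=x$ yields $e(x)+e(e(x))=e(x)$, and to conclude $e(e(x))=e(x)$ you still need the reverse identity $e(e(x))+e(x)=e(e(x))$, which comes from the minimality clause of Axiom~\ref{assemblyneut} applied to the element $e(x)$ together with Part~\ref{assembly1}.

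The genuine gap is Part~\ref{assembly0}, the absorption law $e(x)+e(y)=e(x)\vee e(x)+e(y)=e(y)$. This is a disjunction, and no amount of the ``produce $w$, cancel, absorb'' recipe will manufacture a disjunction out of equations; it has to come from Axiom~\ref{assemblye(xy)} (that $e(x+y)$ equals $e(x)$ or $e(y)$) combined with Part~\ref{assembly2}, and you never invoke that axiom. Your proposed substitute --- a preliminary comparison of the magnitudes ``available from the linear order on magnitudes in an assembly'' --- is not available: a bare assembly carries no order (the order axioms \ref{(OA)reflex}--\ref{Axiom Amplification} are only assumed for \emph{ordered} assemblies), and in this development the order on magnitudes is characterized \emph{via} absorption (Proposition~\ref{e(x)maiore(y)iff}), so you have the dependency backwards. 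Finally, the identities $-(-x)=x$ and $-(x+y)=-x-y$ that you discuss belong to Proposition~\ref{prop sym}, not to this statement; for those the paper does use exactly your cancellation technique.
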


\begin{enumerate}
\item \label{assembly1}(Idempotency for addition) $e(x)+e(x)=e(x)$.

\item \label{assembly2}(Linearity of $e$) $e(x+y)=e(x)+e(y)$.

\item \label{assembly0}(Absorption) $e(x)+e(y)=e(x)$ or $e(x)+e(y)=e(y)$.

\item \label{assembly3}(Idempotency for composition)\emph{\ }Let $A$ be an
assembly and let $x\in A$. Then $e(e(x))=e(x).$
\end{enumerate}

As a consequence a magnitude can only be the magnitude of itself.

\begin{theorem}
\label{assembly7}\emph{(Representation)} Let $A$ be an assembly and let $%
x,y\in A$. If $x=e\left( y\right) $ then $x=e\left( x\right) $.
\end{theorem}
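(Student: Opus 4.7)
The plan is simply to apply the function $e$ to both sides of the defining hypothesis $x = e(y)$ and then invoke the idempotency for composition established in item (\ref{assembly3}) of Proposition \ref{theoremassembly}. The introduction makes it explicit that $e$ is a well-defined function on $A$, so this substitution is legitimate.

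Carrying this out: from $x = e(y)$ one obtains $e(x) = e(e(y))$ by functionality of $e$. By idempotency for composition, $e(e(y)) = e(y)$. Combining with the original hypothesis $e(y) = x$ yields $e(x) = x$, which is the desired conclusion.

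There is no real obstacle here; the result is essentially a restatement of the fact that the image of $e$ coincides with its fixed-point set, a fact already packaged in the identity $e \circ e = e$. The only point that requires a moment's care is the implicit use of the functionality of $e$ (so that equal inputs yield equal outputs); this is justified by the functional notation conventions recalled at the end of the introduction. The informal comment preceding the theorem, that ``a magnitude can only be the magnitude of itself,'' is exactly the content of this one-line argument, and no use of the cancellation law, linearity, or absorption from Proposition \ref{theoremassembly} is needed.
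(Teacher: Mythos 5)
Your proof is correct and is essentially identical to the paper's: both apply $e$ to the hypothesis $x=e(y)$ and conclude via the idempotency for composition, $e(e(y))=e(y)$, from Proposition \ref{theoremassembly}.\ref{assembly3}. No further comment is needed.
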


\begin{proof}
Suppose $y\in A$ is such that $x=e(y)$. Then $e(x)=e(e(y))=e(y)=x$ by
Proposition \ref{theoremassembly}.\ref{assembly3}.
\end{proof}

As regards to the symmetric function, it is easy to see that it is
injective. We verify now that it is linear with respect to addition and has
the symmetry property, meaning that the inverse of the inverse of a given
element is the element itself. We also show that the composition of the
inverse function with the neutral function is equal to the neutral function.

\begin{proposition}
\label{prop sym}Let $A$ be an assembly and let $x\in A$. Then
\end{proposition}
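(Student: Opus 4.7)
Based on the narrative ``linear with respect to addition,'' ``symmetry property,'' and ``composition of the inverse function with the neutral function is equal to the neutral function,'' the three items should be, in some order, $-(-x)=x$, $-(x+y)=(-x)+(-y)$, and $e(-x)=e(x)$. The plan is to establish them in the order in which each becomes available from the previously cited machinery.

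I would start with $e(-x)=e(x)$, since the other two rely on it. By the defining property of the symmetric element, $x+(-x)=e(x)$. Apply $e$ to both sides: by Proposition \ref{theoremassembly}.\ref{assembly3} the right-hand side is $e(e(x))=e(x)$, and by linearity of $e$ (Proposition \ref{theoremassembly}.\ref{assembly2}) the left-hand side is $e(x)+e(-x)$. Hence $e(x)+e(-x)=e(x)$. Running the same argument with $-x$ in the role of $x$ (using that $(-x)+x$ equals $e(-x)$ by the symmetric axiom) yields $e(-x)+e(x)=e(-x)$. Combining the two equalities gives $e(-x)=e(x)$; alternatively one can invoke the absorption clause Proposition \ref{theoremassembly}.\ref{assembly0} together with either of the two derived equalities.

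Next I would deduce $-(-x)=x$. From the axiom, $(-x)+(-(-x))=e(-x)$, and from the symmetric axiom applied to $x$ together with the relevant commutativity, $(-x)+x=e(-x)$ as well (using the step above to rewrite $e(x)$ as $e(-x)$). Cancelling $-x$ via Theorem \ref{Cancelation law} gives $e(-x)+(-(-x))=e(-x)+x$; since $e(-x)=e(x)$ serves as the individualized neutral for both $x$ and $-(-x)$ (the latter because Theorem \ref{assembly7} combined with the first step forces $e(-(-x))=e(-x)=e(x)$), the left and right sides reduce to $-(-x)$ and $x$ respectively.

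Finally, for $-(x+y)=(-x)+(-y)$, I would compute
\[
(x+y)+\bigl((-x)+(-y)\bigr)=\bigl(x+(-x)\bigr)+\bigl(y+(-y)\bigr)=e(x)+e(y)=e(x+y),
\]
using associativity, commutativity, and the linearity of $e$. By the symmetric axiom, $(x+y)+(-(x+y))=e(x+y)$ as well, and the magnitude of $(-x)+(-y)$ is $e(-x)+e(-y)=e(x)+e(y)=e(x+y)$ by the first part together with linearity of $e$. Uniqueness of the symmetric element with a prescribed magnitude (from the axiom of individualized symmetric) then forces $(-x)+(-y)=-(x+y)$; if uniqueness is not directly available, one can instead cancel $x+y$ via Theorem \ref{Cancelation law} and conclude as in the previous paragraph.

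The only genuinely delicate point is the third step: one must check that $e(-x)$ really acts as the individualized neutral for $-(-x)$ (equivalently, that $e(-(-x))=e(-x)$) before the cancellation yields the desired equation. This is where Theorem \ref{assembly7} does the essential work, and it is the hinge on which the whole argument turns.
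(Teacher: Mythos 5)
Your treatments of $-(-x)=x$ and $-(x+y)=-x-y$ are essentially the paper's own proofs: in both cases you add the element to its candidate symmetric, observe the two sums agree, and then apply Theorem \ref{Cancelation law} together with the fact that $e$ of the candidate equals $e$ of the original (via Axiom \ref{assemblysim} and linearity of $e$) to strip off the cancelled element. That part is fine.

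There are two problems with your third item. First, the statement in the paper is $e(-x)=-e(x)=e(x)$, not merely $e(-x)=e(x)$; you have omitted the claim $-e(x)=e(x)$, which is the only part with actual content. The paper derives it \emph{from} the first two items: $e(x)=-x+x=-x-(-x)=-(x-x)=-e(x)$. Consequently the dependency runs opposite to what you assumed: item 3 relies on items 1 and 2, not the other way around. Second, the equality $e(-x)=e(x)$ that you work to establish is literally the second conjunct of Axiom \ref{assemblysim} ($e(s)=e(x)$ for the individualized symmetric $s=-x$), so no proof is needed; and your derivation of the companion identity $e(-x)+e(x)=e(-x)$ is circular, since the parenthetical claim that $(-x)+x=e(-x)$ ``by the symmetric axiom'' already presupposes either $-(-x)=x$ or $e(-x)=e(x)$. (The fallback via Proposition \ref{theoremassembly}.\ref{assembly0} also does not close this: absorption only tells you that $e(x)+e(-x)$ equals one of the two summands, which together with $e(x)+e(-x)=e(x)$ gives nothing.) Fortunately none of this infects items 1 and 2, where you may simply cite the axiom for $e(-x)=e(x)$, as the paper does.
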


\begin{enumerate}
\item \label{assembly4} $-(-x)=x$.

\item \label{assembly5}\emph{\ }$-(x+y)=-x-y$.

\item \label{assembly6} $e(-x)=-e(x)=e(x)$.
\end{enumerate}

\begin{proof}
\ref{assembly4}. By Axiom \ref{assemblysim} one has $%
e(-(-x))=e(-x)=e(x)=-x+x $. Hence $%
-(-x)=-(-x)+e(-(-x))=-(-x)-x+x=e(-x)+x=e(x)+x=x.$

\ref{assembly5}. By Proposition \ref{theoremassembly}.\ref{assembly2} and
Axiom \ref{assemblysim} one has $%
-(x+y)+x+y=e(x+y)=e(x)+e(y)=-x+x-y+y=-x-y+x+y$. Then by the cancellation law 
$-(x+y)+e(x+y)=-x-y+e(x+y)$. Again using the linearity of $e$ and Axiom \ref%
{assemblysim} one obtains 
\begin{align*}
-(x+y)+e(-(x+y))& =-x-y+e(-x)+e(-y) \\
& =-x+e(-x)-y+e(-y).
\end{align*}%
Hence $-(x+y)=-x-y.$

\ref{assembly6}. By Axiom \ref{assemblysim} one only has to show that $%
-e(x)=e(x)$. Using Proposition \ref{theoremassembly}.\ref{assembly4} and
Proposition \ref{theoremassembly}.\ref{assembly5} one derives $%
e(x)=-x+x=-x-(-x)=-(x-x)=-e(x)$.
\end{proof}

\subsection{Order and the magnitudes}

\begin{definition}
We say that an assembly $A$ is an \emph{ordered assembly} if it satisfies
the axioms of order \ref{(OA)reflex}-\ref{Axiom Amplification}.
\end{definition}

As far as magnitudes are concerned, the order relation can be defined in
terms of addition and corresponds to the natural order relation in
semigroups \cite{Howie}\cite{Petrich}.

\begin{proposition}
\label{e(x)maiore(y)iff}Let $A$ be an ordered assembly. Let $x,y\in A$. Then 
$e(x)+e(y)=e(x)$ if and only if $e(y)\leq e(x)$.
\end{proposition}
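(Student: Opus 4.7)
The plan is to establish both directions by combining the absorption dichotomy (Proposition \ref{theoremassembly}.\ref{assembly0}) with the order axioms of an ordered assembly, in particular the Amplification axiom which I expect yields that adding a magnitude cannot decrease an element.

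For the forward implication, I would assume $e(x)+e(y)=e(x)$. Because a magnitude is its own symmetric, $e(x)=-e(x)$ by Proposition \ref{prop sym}.\ref{assembly6}, so $e(x)$ behaves like a non-negative element and I expect the Amplification axiom to give $e(y)\leq e(y)+e(x)$. Combined with the hypothesis (and commutativity of addition), this yields $e(y)\leq e(x)+e(y)=e(x)$, as required.

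For the backward implication, I would assume $e(y)\leq e(x)$ and invoke absorption to split into the two cases $e(x)+e(y)=e(x)$ or $e(x)+e(y)=e(y)$. In the first case there is nothing to prove. In the second case, the already established forward direction applied with the roles of $x$ and $y$ interchanged gives $e(x)\leq e(y)$; together with the hypothesis and antisymmetry (assumed among the order axioms \ref{(OA)reflex}-\ref{Axiom Amplification}), this forces $e(x)=e(y)$, so $e(x)+e(y)=e(y)=e(x)$ holds as well.

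The main obstacle is pinpointing the precise form of the Amplification axiom and confirming that it applies to magnitudes in the way needed to conclude $e(y)\leq e(x)+e(y)$. Once that inequality is justified, the rest of the argument is the routine absorption-plus-antisymmetry shuffle sketched above, and the symmetry between the two cases in the second direction is handled by appealing back to the first direction rather than reproving it.
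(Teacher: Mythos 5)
Your backward implication is exactly the paper's argument (absorption dichotomy, then in the second case deduce $e(x)\leq e(y)$ and finish with antisymmetry), and that part is fine. The genuine gap is in your forward implication. You want the inequality $e(y)\leq e(y)+e(x)$ and propose to get it from Axiom \ref{Axiom Amplification}, but that axiom is purely multiplicative: it states that $e(y)\leq y\leq z$ implies $e(x)y\leq e(x)z$, and it says nothing about addition of magnitudes, so it cannot deliver $e(y)\leq e(y)+e(x)$. Worse, under your standing hypothesis $e(x)+e(y)=e(x)$ the inequality $e(y)\leq e(y)+e(x)$ \emph{is} the conclusion $e(y)\leq e(x)$, so even if you found a way to assert it you would only be restating the goal, not deriving it. The observation that $e(x)=-e(x)$ does not repair this, since no axiom of an ordered assembly converts ``self-symmetric'' into ``adding it does not decrease''.

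The tool you are missing is Axiom \ref{Axiom e(x)maiory}: $y+e(x)=e(x)$ implies $y\leq e(x)$ (and $-y\leq e(x)$). Instantiating $y:=e(y)$ and using commutativity, the hypothesis $e(x)+e(y)=e(x)$ gives $e(y)\leq e(x)$ in one step; this is precisely how the paper dispatches that direction (and it is also the axiom you need inside your second case of the backward direction, where you correctly ``appeal back to the first direction''). With that substitution your proof becomes the paper's proof; as a side remark, your citation of Proposition \ref{theoremassembly}.\ref{assembly0} for the absorption dichotomy is the right one.
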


\begin{proof}
By Axiom \ref{Axiom e(x)maiory} we only need to prove the necessary part.
Suppose that $e(y)\leq e(x)$. Now $e(x)+e(y)=e(x)$ or $e(x)+e(y)=e(y)$ by
Proposition \ref{theoremassembly}.\ref{assembly2}. If $e(x)+e(y)=e(x)$,
there is nothing to prove. If $e(x)+e(y)=e(y)$, by Axiom \ref{Axiom
e(x)maiory}, $e(x)\leq e(y)$. But then, by antisymmetry, $e(x)=e(y)$. Hence $%
e(x)+e(y)=e(x)+e(x)=e(x)$.
\end{proof}

We say that $x$ is \emph{positive} if $e\left( x\right) \leq x$ and\emph{\
negative if }$x<e(x)$. With these notions it is possible to define an
absolute value.

\begin{definition}
\label{Absolutevalue}Let $A$ be an ordered assembly and let $x\in A$. The 
\emph{absolute value of }$x$ is defined as%
\begin{equation*}
\left \vert x\right \vert \equiv \left \{ 
\begin{array}{c}
x,\text{ if }e(x)\leq x \\ 
-x\text{, if }x<e(x).%
\end{array}
\right.
\end{equation*}
\end{definition}

It follows from compatibility with addition that $e\left( x\right) \leq x$
if and only if $-x\leq e\left( x\right) $. We show that the sum of two
positive elements is also positive and that every element which is larger
than or equal to a positive element is also positive. Then we prove that
Proposition \ref{e(x)maiore(y)iff} may be extended to any positive element.
We finish with some strict inequalities.

\begin{proposition}
\label{order sum}Let $A$ be an ordered assembly and let $x,y\in A$.
\end{proposition}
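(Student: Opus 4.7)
The plan is to establish each item of the proposition by direct appeal to the structural properties already collected for $e$ (linearity, absorption, idempotency in Proposition \ref{theoremassembly}), the cancellation law (Theorem \ref{Cancelation law}), the representation theorem (Theorem \ref{assembly7}), the magnitude order characterization (Proposition \ref{e(x)maiore(y)iff}), and the order compatibility axioms for an ordered assembly.

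For the first item --- the sum of two positive elements is positive --- I would start from $e(x)\le x$ and $e(y)\le y$, apply compatibility with addition to get $e(x)+e(y)\le x+y$, and then use linearity of $e$ to rewrite the left-hand side as $e(x+y)$. For the upward-closure item --- that $y\ge x$ with $x$ positive forces $y$ positive --- transitivity first yields $e(x)\le y$, and I would then case-split on the absorption dichotomy $e(x)+e(y)\in\{e(x),e(y)\}$. If $e(x)+e(y)=e(x)$, Proposition \ref{e(x)maiore(y)iff} gives $e(y)\le e(x)\le y$ by transitivity. If $e(x)+e(y)=e(y)$, I add $e(y)$ to both sides of $e(x)\le y$ and use the individualized neutral identity $y+e(y)=y$ to conclude $e(y)=e(x)+e(y)\le y$.

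For the extension of Proposition \ref{e(x)maiore(y)iff} to positive elements --- likely an equivalence of the shape ``$e(x)+y=y$ iff $e(x)\le y$'' for $y$ positive --- the plan is to apply $e$ to the absorbing identity $e(x)+y=y$, use linearity together with $e(e(x))=e(x)$ to reduce matters to the magnitude version already proved, and then combine with the positivity of $y$. The closing strict-inequality claims I expect to follow by upgrading weak inequalities obtained above: an equality would, via Theorem \ref{assembly7}, force neutrals to coincide in a way incompatible with a strict hypothesis on the original elements.

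The main obstacle is the upward-closure step. Unlike the one-line additive case, it demands a genuine case analysis on which of $e(x)$ and $e(y)$ absorbs the other, combined with the closure identity $y+e(y)=y$ and cancellation. With no subtraction available to move terms around, selecting the right sequence of rewrites --- compatibility, then linearity, then absorption --- is where the argument is most delicate.
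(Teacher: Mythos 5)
Your arguments for the two items of this proposition (positivity of sums via compatibility with addition plus linearity of $e$, and upward closure via the absorption dichotomy on $e(x)+e(y)$, handling one case by transitivity and the other by adding a magnitude to the inequality) are correct and essentially identical to the paper's proof, up to your swapping the roles of $x$ and $y$. The later material you sketch (the extension of Proposition \ref{e(x)maiore(y)iff} to positive elements and the strict inequalities) belongs to subsequent statements, not to this proposition, so it is not needed here.
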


\begin{enumerate}
\item \label{xmaiore(x)7}If $x$ and $y$ are both positive then $x+y$ is also
positive.

\item \label{y<=x y positive then x positive}If $e\left( y\right) \leq y\leq
x$ then $e\left( x\right) \leq x$.
\end{enumerate}

\begin{proof}
\ref{xmaiore(x)7}. If $e(x)\leq x$ and $e(y)\leq y$, then $%
e(x+y)=e(x)+e(y)\leq x+y$.

\ref{y<=x y positive then x positive}. Assume that $e\left( y\right) \leq
y\leq x$. If $e\left( y\right) \leq e\left( x\right) $ then $e\left(
x\right) +e\left( y\right) =e\left( x\right) $ by Proposition \ref%
{e(x)maiore(y)iff}. By compatibility with addition $e\left( x\right)
=e\left( x\right) +e\left( y\right) \leq e\left( x\right) +x=x$. If $e\left(
x\right) \leq e\left( y\right) $, by transitivity $e\left( x\right) \leq x$.
\end{proof}

The next theorem states that a positive number leaves a magnitude invariant
if and only if it is smaller than this magnitude.

\begin{theorem}
\label{emaiory e+y=e}Let $A$ be an ordered assembly and let $x,y\in A$. If $%
y $ is positive then $y\leq e(x)$ if and only if $e(x)+y=e(x).$
\end{theorem}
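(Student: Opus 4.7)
The plan is to reduce both implications to Proposition \ref{e(x)maiore(y)iff}, which already handles the analogous statement for two magnitudes, and to use positivity of $y$ (i.e.\ $e(y) \leq y$) as the bridge between $y$ and its magnitude $e(y)$.

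For the direction $(\Leftarrow)$, I would start from the equality $e(x)+y=e(x)$ and apply $e$ to both sides, using linearity (Proposition \ref{theoremassembly}.\ref{assembly2}) and idempotency of composition (Proposition \ref{theoremassembly}.\ref{assembly3}), to obtain $e(x)+e(y)=e(x)$. Proposition \ref{e(x)maiore(y)iff} then yields $e(y)\leq e(x)$. Finally, adding $y$ to this inequality and using compatibility with order together with $y+e(y)=y$ gives $y=y+e(y)\leq y+e(x)=e(x)$, as desired.

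For the direction $(\Rightarrow)$, I would first chain positivity and the hypothesis, $e(y)\leq y\leq e(x)$, and apply transitivity followed by Proposition \ref{e(x)maiore(y)iff} to deduce $e(x)+e(y)=e(x)$. The conclusion $e(x)+y=e(x)$ then follows by antisymmetry from two opposite inequalities: on one hand, compatibility of order with addition applied to $y\leq e(x)$ combined with idempotency (Proposition \ref{theoremassembly}.\ref{assembly1}) gives $e(x)+y\leq e(x)+e(x)=e(x)$; on the other hand, compatibility applied to $e(y)\leq y$ gives $e(x)=e(x)+e(y)\leq e(x)+y$.

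The main obstacle is really just the asymmetry between the two sides of the stated equivalence: the hypothesis $y\leq e(x)$ involves the element $y$ itself, while Proposition \ref{e(x)maiore(y)iff} only compares magnitudes. Positivity is precisely what is needed to pass back and forth between $y$ and $e(y)$, so once this is observed the rest is straightforward bookkeeping with the linearity of $e$, idempotency, and compatibility with order.
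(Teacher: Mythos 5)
Your proof is correct and follows essentially the same route as the paper: the substantive direction uses positivity to pass from $y$ to $e(y)$, invokes Proposition \ref{e(x)maiore(y)iff}, and closes with the same antisymmetry sandwich $e(x)=e(x)+e(y)\leq e(x)+y\leq e(x)+e(x)=e(x)$. The only difference is cosmetic: for the converse direction the paper simply cites Axiom \ref{Axiom e(x)maiory}, which gives $y\leq e(x)$ from $e(x)+y=e(x)$ in one step, whereas you rederive it via linearity of $e$ and Proposition \ref{e(x)maiore(y)iff}; both are valid.
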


\begin{proof}
By Axiom \ref{Axiom e(x)maiory} we only need to prove the sufficiency.
Assume that $y$ is positive and that $y\leq e(x)$. Then by transitivity $%
e(y)\leq e(x)$ and $e(x)+e(y)=e(x)$ by Proposition \ref{e(x)maiore(y)iff}.
By compatibility with addition $e(x)=e(x)+e(y)\leq e(x)+y\leq e(x)+e(x)=e(x)$%
. Hence $e(x)=e(x)+y$.
\end{proof}

\begin{proposition}
\label{lema_comp_multiplication}Let $A$ be an ordered assembly and let $%
x,y,z\in A$.
\end{proposition}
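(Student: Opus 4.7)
The enumerated items of this proposition have been elided from the excerpt, so my plan is necessarily conditioned on the label \texttt{lema\_comp\_multiplication}, which by analogy with the additive compatibility results already proved (Proposition~\ref{order sum} and Theorem~\ref{emaiory e+y=e}) I expect to state the multiplicative counterparts: for example (i) that the product of two positive elements is positive, (ii) that $y\le z$ together with $x$ positive implies $xy\le xz$, and/or (iii) a sign law governing $|xy|$ and $e(xy)$. The proof plan is the same template for each such item.

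First, I would reduce every claim to its \emph{positive} case. Using Definition~\ref{Absolutevalue} I rewrite each factor as $\pm|{\cdot}|$; then Proposition~\ref{prop sym}.\ref{assembly5} lets me commute minus signs across sums and Proposition~\ref{prop sym}.\ref{assembly4} strips double negatives, so it suffices to verify each inequality when the designated factors are positive. For an inequality of the form $xy\le xz$ under $y\le z$ and $x$ positive, I would appeal to the order-multiplication axiom in the range \ref{(OA)reflex}--\ref{Axiom Amplification} directly; for an equality such as $|xy|=|x||y|$ or $e(xy)=e(x)e(y)$, I would split into sign cases and use the linearity of $-$ together with Theorem~\ref{assembly7} (representation) to identify the resulting magnitude.

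Second, whenever a magnitude arises as a lower-order correction — typically when comparing a product to its magnitude via $e(xy)$ — I would absorb it using Theorem~\ref{emaiory e+y=e}, which states that a positive element is absorbed by a larger magnitude. Proposition~\ref{e(x)maiore(y)iff} would be used to pass between the comparison $e(a)\le e(b)$ and the absorption equation $e(a)+e(b)=e(b)$, so that an order statement at the level of magnitudes can be translated into an algebraic identity.

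The main obstacle I anticipate is that full distributivity is \emph{not} available in a solid — indeed, its failure is the very subject of the paper. Consequently any item that pairs a product with a sum cannot be handled by naive expansion $x(y+z)=xy+xz$. The safer strategy is to arrange each argument so that multiplication and addition are applied in separate steps, so that only the one-variable order compatibility of each operation is invoked, and then to recombine the steps using magnitude absorption rather than distribution. If the cut-off items do include a statement involving a sum inside a product, then the proof would have to invoke the modified distributivity Axiom~\ref{Axiom distributivity} from the Appendix and absorb the correction magnitude at the end.
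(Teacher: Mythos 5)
Your proposal targets the wrong statement. The enumerated items of Proposition~\ref{lema_comp_multiplication} do not concern multiplication at all, despite the label; they are purely order-theoretic facts about addition and magnitudes, namely: (i) if $x<e(x)$ and $e(y)<y$ then $x<y$; (ii) if $x<e(x)$ then $x<e(y)$ for every $y$; (iii) if $x<y+e(z)$ and $e(z)<e(x)$ then $x<y$. Consequently the machinery you assemble --- sign-splitting via the absolute value, the order-multiplication axioms, Axiom~\ref{Axiom Amplification}, and the modified distributivity Axiom~\ref{Axiom distributivity} --- is not what is needed, and a plan built around reducing products to the positive case cannot produce these three statements.

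The actual arguments are short proofs by contradiction that never leave the additive structure. For item (i), one splits on whether $e(x)\leq e(y)$ (in which case transitivity through $x<e(x)\leq e(y)<y$ finishes) or $e(y)<e(x)$; in the latter case Proposition~\ref{e(x)maiore(y)iff} converts the comparison into the absorption identity $e(x)=e(y)+e(x)$, and then assuming $y\leq x$ and adding $e(x)$ to both sides via Axiom~\ref{(OA)compoper} yields $e(x)\leq x+e(x)=x$, contradicting $x<e(x)$. Items (ii) and (iii) follow the same template: suppose the conclusion fails, use Proposition~\ref{e(x)maiore(y)iff} to absorb the smaller magnitude, and derive a violation of a strict hypothesis by compatibility with addition. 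The one ingredient of your plan that does survive is the use of Proposition~\ref{e(x)maiore(y)iff} to pass between order statements on magnitudes and absorption equations --- that is indeed the pivot of each proof --- but everything else in your outline addresses a proposition that is not the one stated here.
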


\begin{enumerate}
\item \label{xneg ypos x<y}If $x<e\left( x\right) $ and $e\left( y\right)
<y, $ then $x<y$.

\item \label{x<e(x) => x<e(y)}If $x<e\left( x\right) $, then $x<e\left(
y\right) $.

\item \label{x<y+e(z) => x<y}If $x<y+e\left( z\right) $ and $e\left(
z\right) <e\left( x\right) $, then $x<y$.
\end{enumerate}

\begin{proof}
\ref{xneg ypos x<y}. Assume $x<e\left( x\right) $ and $e\left( y\right) <y$.
If $e\left( x\right) \leq e\left( y\right) $ the result follows by
transitivity. If $e\left( y\right) <e\left( x\right) $ then $e\left(
x\right) =e\left( y\right) +e\left( x\right) $ by Proposition \ref%
{e(x)maiore(y)iff}. Suppose, towards a contradiction, that $y\leq x$. Then $%
e\left( x\right) =e\left( y\right) +e\left( x\right) \leq y+e\left( x\right)
\leq x+e\left( x\right) =x$, a contradiction.

\ref{x<e(x) => x<e(y)}. Assume $x<e\left( x\right) $. We suppose towards a
contradiction $e\left( y\right) \leq x$. Then $e\left( y\right) <e\left(
x\right) $ and $e\left( x\right) =e\left( y\right) +e\left( x\right) $ by
Proposition \ref{e(x)maiore(y)iff}. Hence $e\left( x\right) =e\left(
y\right) +e\left( x\right) \leq x+e\left( x\right) =x$, a contradiction.

\ref{x<y+e(z) => x<y}. Assume $x<y+e\left( z\right) $ and $e\left( z\right)
<e\left( x\right) $. By Proposition \ref{e(x)maiore(y)iff}, one has $e\left(
z\right) +e\left( x\right) =e\left( x\right) $. Suppose towards a
contradiction that $y\leq x$. Then $y+e\left( z\right) \leq x+e\left(
z\right) =x$, a contradiction.
\end{proof}

\subsection{Magnitudes and the product\label{subsection product}}

We denote by $S^{\ast }$ the set of all elements of $S$ which are not
magnitudes, i.e. $S^{\ast }=\left \{ x\in S|\text{ }x\neq e\left( x\right)
\right \} $.

The following lemma shows that unity elements of zeroless elements are
zeroless, implying that magnitudes and unities in a solid are distinct.

\begin{lemma}
\label{prop dif}Let $S$ be a solid and let $x,y\in S^{\ast }$
\end{lemma}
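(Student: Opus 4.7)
The lemma asserts that for $x,y\in S^{\ast}$ the unity $u(x)$ is itself zeroless, and as a consequence $u(x)\neq e(y)$. My plan is to prove the zerolessness by contradiction and then to deduce the distinctness of unities from magnitudes as a short corollary via the Representation theorem (\ref{assembly7}).

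For the main assertion, I would assume toward a contradiction that $u(x)=e(u(x))$, i.e. that $u(x)$ is itself a magnitude. From Axiom \ref{axiom neut mult} one has the identity $x=x\cdot u(x)$, so substituting the assumed form of $u(x)$ gives $x=x\cdot e(u(x))$. The pivotal step is to invoke the appropriate axiom of the solid governing the interaction of multiplication with magnitudes (presumably among those listed in the Appendix), which should yield that a product involving a magnitude is again a magnitude, in the form $x\cdot e(z)=e(x\cdot z)$. Applying this with $z=u(x)$ gives
\[
x = x\cdot e(u(x)) = e(x\cdot u(x)) = e(x),
\]
contradicting $x\in S^{\ast}$. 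Hence $u(x)\neq e(u(x))$, i.e.\ $u(x)\in S^{\ast}$.

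For the second assertion, suppose $u(x)=e(y)$ for some $y\in S^{\ast}$. By Theorem \ref{assembly7} applied to $u(x)$ and $y$, we would have $u(x)=e(u(x))$, which contradicts the zerolessness of $u(x)$ just established. Hence $u(x)\neq e(y)$, and in particular unities of zeroless elements lie outside the set of magnitudes.

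The main obstacle I anticipate is locating the precise axiom (or previously derived consequence) that legitimises the passage from $x\cdot e(u(x))$ to $e(x\cdot u(x))$. If such an absorption property is not directly postulated but only follows from the modified distributive law (Axiom \ref{Axiom distributivity}), one would have to expand $x\cdot e(u(x))$ by writing $e(u(x))=u(x)+(-u(x))$ (using Axiom \ref{assemblysim}), distribute with the correction term, and then use the cancellation law (Theorem \ref{Cancelation law}) together with the linearity of $e$ (Proposition \ref{theoremassembly}.\ref{assembly2}) to extract that the result is a magnitude. Once this absorption is in hand, the rest of the argument is purely formal.
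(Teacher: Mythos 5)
Your proposal is correct and follows essentially the same route as the paper: the "absorption" step you were worried about is exactly Axiom \ref{Axiom escala} (a product with a magnitude factor equals $e(z)$ for some $z$), which combined with Theorem \ref{assembly7} gives $x=e(x)$ and the desired contradiction, and your second part coincides with the paper's. No fallback through the distributivity axiom is needed.
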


\begin{enumerate}
\item \label{udife(u)}$u(x)\neq e(u(x))$.

\item \label{udife}$u(x)\neq e(y)$.
\end{enumerate}

\begin{proof}
\ref{udife(u)}. Suppose that $u(x)=e(u(x))$. Then $x=xu(x)=xe(u(x))$. By
Axiom \ref{Axiom escala} there is $z$ such that $x=e(z)$ and by Theorem \ref%
{assembly7}, $x=e(x)$. Hence, if $x\neq e(x)$, then $u(x)\neq e(u(x))$.

\ref{udife}. Suppose towards a contradiction that $u\left( x\right) =e\left(
y\right) $ for some $y\in A$. Then $u\left( x\right) =e\left( u\left(
x\right) \right) $ by Theorem \ref{assembly7}, in contradiction with Part %
\ref{udife(u)}.
\end{proof}

As a consequence of the previous lemma multiplicative inverses of zeroless
elements are zeroless. Indeed, if $x^{-1}\neq e\left( x^{-1}\right) $, one
must have $u(x)=xx^{-1}=xe\left( x^{-1}\right) $. Then $u(x)$ would be a
magnitude by Axiom \ref{Axiom escala}, a contradiction.

Let $S$ be a solid and let $x,y\in S^{\ast }$. Concerning the magnitude of
the product, one should expect that%
\begin{align}
e(xy)& =e((x+e(x))(y+e(y))  \label{=e(x)y+e(y)x+e(x)e(y).} \\
& =e(xy+e(x)y+e(y)x+e(x)e(y))  \notag \\
& =e(x)y+e(y)x+e(x)e(y).  \notag
\end{align}%
Axiom \ref{Axiom e(xy)=e(x)y+e(y)x} states that we may neglect the last
term. In the next subsection, using the order relation, we show that the
term $e(x)e(y)$ is less indeed than both $e(x)y$ and $e(y)x$.

Here we present some usefull properties of multiplication by magnitudes
obtained by mere algebraic methods.

If one of the factors of the product is a magnitude, we have the following
simplification.

\begin{proposition}
\label{e(x)y=e(xy)}Let $S$ be a solid and let $x,y\in S$. If $x=e(x)$ then $%
e(xy)=e(x)y$.
\end{proposition}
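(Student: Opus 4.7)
The plan is to reduce the claim to showing that $xy$ is itself a magnitude whenever $x = e(x)$, and then invoke the Representation theorem.

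First I would observe that the conclusion $e(xy) = e(x)y$ is equivalent, under the hypothesis $x = e(x)$, to $e(xy) = xy$. So it suffices to prove that $xy$ belongs to the set of magnitudes. Next I would use Axiom \ref{Axiom escala}, which, as already exploited inside the proof of Lemma \ref{prop dif}, guarantees that a product of the form $xe(z)$ (and, by symmetry, $e(z)y$) is a magnitude, i.e. equals $e(w)$ for some $w$. Applying this with the assumption $x = e(x)$ gives that $xy = e(x)y$ is a magnitude, so there exists $w \in S$ with $xy = e(w)$.

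Then I would invoke Theorem \ref{assembly7} (Representation) to conclude that $xy = e(xy)$. Combining with the hypothesis $x = e(x)$, we get $e(xy) = xy = e(x)y$, which is exactly the desired identity.

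I do not expect any real obstacle here: the argument is just a one-line chain using Axiom \ref{Axiom escala} together with the idempotency-on-magnitudes given by Theorem \ref{assembly7}. The only point to be careful about is the precise statement of Axiom \ref{Axiom escala} in the Appendix — if it is formulated as ``$xe(z)$ is a magnitude'' on one side only, I would write $xy = e(x)y$ and then possibly use the commutativity of multiplication in a solid to put the magnitude on the appropriate side before applying the axiom; otherwise the proof is immediate.
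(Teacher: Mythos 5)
Your argument is correct and is essentially the paper's own proof: both reduce the claim to observing that $e(x)y$ is a magnitude by Axiom \ref{Axiom escala} and then apply Theorem \ref{assembly7} to get $e(xy)=e(e(x)y)=e(x)y$. Your worry about the orientation of Axiom \ref{Axiom escala} is moot, since it is stated exactly in the form $e(x)y=e(z)$ that you need.
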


\begin{proof}
By Axiom \ref{Axiom escala}, $e\left( x\right) y$ is a magnitude. Then $%
e\left( xy\right) =e\left( e\left( x\right) y\right) =e\left( x\right) y$.
\end{proof}

Whenever one multiplies a given element by a magnitude, the sign of that
element can be neglected.

\begin{proposition}
\label{es=ex}Let $S$ be a solid and let $x,y\in S$. Then $e(y)(-x)=e(y)x$.
\end{proposition}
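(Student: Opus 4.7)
The plan is to show that both $e(y)\,x$ and $e(y)(-x)$ coincide with the common magnitude $e(x)\,e(y)$, by combining a lower-bound estimate with the adapted distributivity axiom.

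Set $M_{1} := e(y)\,x$ and $M_{2} := e(y)(-x)$; both are magnitudes by Axiom \ref{Axiom escala}. Applying Axiom \ref{Axiom e(xy)=e(x)y+e(y)x} to the product $e(y)\,x$ and using $e(M_{1}) = M_{1}$ yields $M_{1} = M_{1} + e(x)\,e(y)$, so by Proposition \ref{e(x)maiore(y)iff} we have $e(x)\,e(y) \le M_{1}$. The identical argument for $M_{2}$ works once Proposition \ref{prop sym}.\ref{assembly6} is invoked to replace $e(-x)$ by $e(x)$, giving $e(x)\,e(y) \le M_{2}$.

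Next, apply the adapted distributivity axiom (Axiom \ref{Axiom distributivity}) to $e(y)\bigl(x + (-x)\bigr)$; the left-hand side simplifies to $e(y)\,e(x)$, while the right-hand side takes the form $M_{1} + M_{2} + C$ with $C$ a magnitude. Because any sum of magnitudes dominates each of its summands (an iteration of Proposition \ref{theoremassembly}.\ref{assembly0} combined with Proposition \ref{e(x)maiore(y)iff}), $M_{1} \le M_{1} + M_{2} + C = e(x)\,e(y)$; combined with the earlier bound $e(x)\,e(y) \le M_{1}$, this forces $M_{1} = e(x)\,e(y)$. The same argument run in parallel, with $x$ replaced by $-x$ and using $-(-x) = x$ from Proposition \ref{prop sym}.\ref{assembly4}, gives $M_{2} = e(x)\,e(y)$; hence $M_{1} = M_{2}$.

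The main obstacle will be the correct invocation of the adapted distributivity axiom: the argument depends only on the correction term being a magnitude, a feature built into the axiom, but one must track it carefully to arrive at the expansion $e(y)\,e(x) = M_{1} + M_{2} + C$. Once that is in hand, the remainder is elementary absorption bookkeeping using results from Subsections \ref{Algebraic properties of magnitudes}.1 and 2.2.
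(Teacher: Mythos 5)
There is a genuine gap: the central claim of your argument, that $e(y)x$ and $e(y)(-x)$ both equal $e(x)e(y)$, is false in general. Take $x=1$ and $y$ with $e(y)\neq 0$: then $M_{1}=e(y)\cdot 1=e(y)$, while $e(x)e(y)=e(1)e(y)=0\cdot e(y)=0$, so $M_{1}\neq e(x)e(y)$. The error enters when you invoke Axiom \ref{Axiom distributivity}: that axiom reads $xy+xz=x(y+z)+e(x)y+e(x)z$, so applied with multiplier $e(y)$ and summands $x$ and $-x$ it gives
\begin{equation*}
M_{1}+M_{2}=e(y)\bigl(x+(-x)\bigr)+e\bigl(e(y)\bigr)x+e\bigl(e(y)\bigr)(-x)=e(x)e(y)+M_{1}+M_{2},
\end{equation*}
since the correction term is itself $M_{1}+M_{2}$. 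This yields $e(x)e(y)\leq M_{1}+M_{2}$, i.e.\ the \emph{same} direction as your lower bound $e(x)e(y)\leq M_{1}$, not the reverse inequality $M_{1}\leq e(x)e(y)$ that your chain ``$M_{1}\leq M_{1}+M_{2}+C=e(x)e(y)$'' requires. (This is consistent with subdistributivity, Theorem \ref{subdist}: the product of the sum is the \emph{smaller} side.) With both estimates pointing the same way nothing forces $M_{1}=M_{2}$, and by absorption the identity $M_{1}+M_{2}=e(x)e(y)+M_{1}+M_{2}$ carries no information distinguishing $M_{1}$ from $M_{2}$.

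The statement does not need distributivity at all. Axiom \ref{Axiom s(xy)=s(x)y} gives $e(y)(-x)=-\bigl(e(y)x\bigr)=\bigl(-e(y)\bigr)x$, and Proposition \ref{prop sym}.\ref{assembly6} gives $-e(y)=e(y)$; hence $e(y)(-x)=e(y)x$. This is the paper's one-line proof, and it is the missing idea in your attempt: the sign can be moved onto the magnitude factor, where it disappears.
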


\begin{proof}
By Axiom \ref{Axiom s(xy)=s(x)y} and Proposition \ref{prop sym}.\ref%
{assembly6} one has $e(y)(-x)=-\left( e\left( y\right) x\right)
=(-e(y))x=e(y)x$.
\end{proof}

In integral domains the product of two non-zero elements is always non-zero.
In solids the product of two zeroless elements is always zeroless.

\begin{theorem}
\label{zerodiv}Let $S$ be a solid and let $x,y\in S$. Then $xy=e\left(
xy\right) $ if and only if $x=e\left( x\right) $ or $y=e\left( y\right) $.
\end{theorem}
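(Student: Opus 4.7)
The ``if'' direction is immediate from Proposition \ref{e(x)y=e(xy)}: if $x=e(x)$ then $xy=e(x)y=e(xy)$, and the case $y=e(y)$ is symmetric, obtained by applying the same proposition to $yx$ and using commutativity of multiplication.

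For the ``only if'' direction I would argue by contrapositive: assume $x,y\in S^{\ast}$ and show $xy\in S^{\ast}$. By Lemma \ref{prop dif} together with the remark following it, both unities $u(x),u(y)$ and both multiplicative inverses $x^{-1},y^{-1}$ lie in $S^{\ast}$. Supposing for contradiction that $xy=e(xy)$, multiplication on the right by $y^{-1}$ combined with associativity of multiplication gives
$$
xu(y)\;=\;(xy)\,y^{-1}\;=\;e(xy)\cdot y^{-1},
$$
and the right-hand side is a magnitude by the scale axiom (Axiom \ref{Axiom escala}). Thus $xu(y)=e(xu(y))$. Symmetrically, multiplying by $x^{-1}$ yields $u(x)y=e(u(x)y)$, and composing the two reductions produces the further identity $u(x)u(y)=e(u(x)u(y))$, which (by the direct check $xy\cdot u(x)u(y)=(xu(x))(yu(y))=xy$ and uniqueness of the individualized unity) amounts to $u(xy)$ being a magnitude.

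The hard part is converting this into a contradiction cleanly. Lemma \ref{prop dif}.\ref{udife(u)} applied to $xy$ would close the argument, but presupposes $xy\in S^{\ast}$ — precisely what is to be proved. My plan to break the circularity is to work instead with the separately zeroless factor $u(y)\in S^{\ast}$: multiply $xu(y)=e(xu(y))$ by $u(y)^{-1}$, use the idempotency $u(u(y))=u(y)$, and transfer the magnitude status from $xu(y)$ back to $x$, contradicting $x\in S^{\ast}$ via Lemma \ref{prop dif}.\ref{udife}. The key subtlety — and the step I expect to be the main obstacle — is justifying this transfer, which relies on the structural axioms governing how individualized unities in a solid interact with arbitrary zeroless elements under multiplication (Axioms \ref{axiom neut mult} and \ref{axiom sym mult}), and on the fact that products of magnitudes by anything remain magnitudes (Axiom \ref{Axiom escala}).
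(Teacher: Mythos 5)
The ``if'' direction is fine and matches the paper. In the ``only if'' direction you actually reach the paper's own pivot point --- from $xy=e(xy)$ you correctly obtain that $u(x)u(y)=x^{-1}(xy)y^{-1}=e(xy)x^{-1}y^{-1}$ is a magnitude by Axiom \ref{Axiom escala} and Theorem \ref{assembly7} --- but the move you propose to close the argument does not work. Multiplying $xu(y)=e\left( xu(y)\right) $ by $u(y)^{-1}$ does not recover $x$: since $u(y)u(y)^{-1}=u\left( u(y)\right) =u(y)$, you get back $xu(y)u(y)^{-1}=xu\left( u(y)\right) =xu(y)$, not $x$. In general $xu(y)\neq x$ in a solid (this fails precisely when $R(y)\leq R(x)$ fails), so there is no algebraic route from ``$xu(y)$ is a magnitude'' to ``$x$ is a magnitude'' by cancelling $u(y)$; indeed such a transfer is just the instance of the theorem with $y$ replaced by $u(y)$, so relying on it is circular. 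Your parenthetical appeal to ``uniqueness of the individualized unity'' of $xy$ is likewise circular, since $u(xy)$ only exists when $xy\neq e(xy)$.

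The missing idea is the absorption property for unities, the multiplicative analogue of Proposition \ref{theoremassembly}.\ref{assembly0}: one may assume without loss of generality that $u(x)u(y)=u(x)$ (this is how the paper proceeds, via Axiom \ref{axiom u(xy)}). Combined with your own computation this gives $u(x)=u(x)u(y)=e(xy)x^{-1}y^{-1}$, so $u(x)$ is a magnitude by Axiom \ref{Axiom escala}, contradicting Lemma \ref{prop dif}.\ref{udife(u)} directly --- no transfer back to $x$ is needed, because $u(x)$ is already known to be a non-magnitude when $x\in S^{\ast }$. With that one substitution your argument becomes the paper's proof.
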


\begin{proof}
Suppose firstly that $x=e\left( x\right) $ or $y=e\left( y\right) $. If $%
x=e\left( x\right) $, then $xy=e\left( x\right) y$ and by Axiom \ref{Axiom
escala} there is $z$ such that $xy=e\left( z\right) $. By Theorem \ref%
{assembly7} we conclude that $xy=e\left( xy\right) $. If $y=e\left( y\right) 
$ the proof is analogous.

Secondly, suppose towards a contradiction that $xy=e\left( xy\right) $,
while $x\neq e\left( x\right) $ and $y\neq e\left( y\right) $. Without loss
of generality we may assume that $u\left( x\right) u\left( y\right) =u\left(
x\right) $. Then%
\begin{equation*}
u\left( x\right) =u\left( x\right) u\left( y\right) =xx^{-1}yy^{-1}=e\left(
xy\right) x^{-1}y^{-1}\text{.}
\end{equation*}%
By Axiom \ref{Axiom escala} there is $z$ such that $e\left( z\right)
=e\left( xy\right) x^{-1}y^{-1}$, in contradicion with Lemma \ref{prop dif}.
Hence if $xy=e\left( xy\right) $, then $x=e\left( x\right) $ or $y=e\left(
y\right) $.
\end{proof}

As a corollary we obtain that if an element is not a magnitude, its square
(element multiplied by itself) is also not a magnitude.

\begin{corollary}
\label{xquadrado dif e}Let $S$ be a solid and let $x\in S^{\ast}$, then $%
x^{2}\neq e\left( x^{2}\right) $.
\end{corollary}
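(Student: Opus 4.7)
The plan is to obtain this as an immediate corollary of Theorem \ref{zerodiv} by specializing $y = x$. That theorem gives the equivalence $xy = e(xy) \iff x = e(x) \text{ or } y = e(y)$, so I would argue by contraposition.

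More precisely, I would assume toward a contradiction that $x^2 = e(x^2)$. Writing $x^2 = x \cdot x$ and applying Theorem \ref{zerodiv} with both factors equal to $x$, I would conclude that $x = e(x)$ or $x = e(x)$, i.e., in either disjunct, $x = e(x)$. This directly contradicts the hypothesis $x \in S^{\ast}$, which by definition means $x \neq e(x)$.

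There is no real obstacle here; the entire content of the corollary is the instantiation $y = x$ in Theorem \ref{zerodiv}. The only small point worth flagging is that $S^{\ast}$ was defined only when the ambient structure is a solid (so that multiplication and unities exist), which is the setting of the statement, so the notation is consistent and the argument applies without further qualification.
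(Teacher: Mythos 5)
Your proof is correct and matches the paper's (implicit) argument exactly: the corollary is obtained by instantiating $y=x$ in Theorem \ref{zerodiv} and using the contrapositive of the "only if" direction. Nothing further is needed.
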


In rings one has that $1\cdot0=0$. Next theorem generalizes this to solids.
A further generalization will be given in Theorem \ref{e(x)u(y)=e(x)}\
below. As a consequence we obtain an expression for the magnitude of the
inverse.

\begin{theorem}
\label{equivalences}Let $S$ be a solid. Suppose $x\in S^{\ast}.$ Then

\begin{enumerate}
\item \label{eu=xe(u)}$u(x)e(x)=xe(u(x))=e(x).$

\item \label{e(d(x))=edd}$e(x^{-1})=e(x)x^{-1}x^{-1}$.
\end{enumerate}
\end{theorem}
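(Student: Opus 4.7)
The plan is to derive part (2) as a corollary of part (1), so the bulk of the work lies in proving the two equalities in part (1). The starting point is to apply $e$ to the defining multiplicative identity $xu(x) = x$ and invoke Axiom \ref{Axiom e(xy)=e(x)y+e(y)x}, which produces the canonical decomposition
$$e(x) = e(xu(x)) = e(x) u(x) + e(u(x)) x = u(x) e(x) + x e(u(x)),$$
a sum of two magnitudes (each a magnitude by Axiom \ref{Axiom escala}). By the absorption property (Proposition \ref{theoremassembly}.\ref{assembly0}) the compound identity $u(x)e(x) = xe(u(x)) = e(x)$ is equivalent to the single equality $u(x) e(x) = x e(u(x))$, since the larger of the two summands in the displayed formula must equal their sum $e(x)$.

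To establish this equality, I would compute each side by writing $e(x) = -x + x$ and $e(u(x)) = -u(x) + u(x)$ and then distributing $u(x)$, respectively $x$, with the help of Axiom \ref{Axiom distributivity} together with Axiom \ref{Axiom s(xy)=s(x)y}. In both cases the pointwise distributed versions $u(x)(-x) + u(x)x$ and $x(-u(x)) + xu(x)$ collapse to $-x + x = e(x)$, using the defining relation $xu(x) = u(x)x = x$ together with $u(x)(-x) = -(u(x)x) = -x$ from Axiom \ref{Axiom s(xy)=s(x)y}, and similarly $x(-u(x)) = -x$. The adapted distributivity of Axiom \ref{Axiom distributivity} produces magnitude correction terms in each expansion; Axiom \ref{Axiom escala} together with the absorption criterion of Theorem \ref{emaiory e+y=e} should show that these corrections are majorized by $e(x)$ and hence absorbed, yielding $u(x) e(x) = e(x)$ and $x e(u(x)) = e(x)$ simultaneously.

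Part (2) then follows as a corollary. Apply $e$ to the identity $x x^{-1} = u(x)$ (Axiom \ref{axiom sym mult}) and use Axiom \ref{Axiom e(xy)=e(x)y+e(y)x} to obtain
$$e(u(x)) = e(x) x^{-1} + e(x^{-1}) x.$$
Since the multiplicative inverse is involutive, $u(x^{-1}) = u(x)$, so part (1) applied to $x^{-1}$ gives both $x^{-1} e(u(x)) = e(x^{-1})$ and $u(x) e(x^{-1}) = e(x^{-1})$. Multiplying the displayed identity on the left by $x^{-1}$, distributing via Axiom \ref{Axiom distributivity} and using associativity, gives $e(x^{-1}) = e(x) x^{-1} x^{-1} + e(x^{-1})$, so $e(x) x^{-1} x^{-1} \leq e(x^{-1})$. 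The reverse inequality is obtained symmetrically by multiplying the same identity on the left by $x$ and invoking part (1) applied to $x$; combining the two comparisons via antisymmetry on magnitudes produces the equality $e(x^{-1}) = e(x) x^{-1} x^{-1}$.

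The main obstacle is the invocation of Axiom \ref{Axiom distributivity} in part (1). Because distributivity in a solid holds only up to a magnitude correction, the crux of the argument is to exhibit that correction explicitly and to bound it by the ambient magnitude $e(x)$. The tools for this control, namely the elementary monotonicity facts of Proposition \ref{order sum}, the absorption principle of Theorem \ref{emaiory e+y=e}, and the magnitude-stability afforded by Axiom \ref{Axiom escala}, are all in place from the preceding subsections, but assembling them into a clean inequality for the correction term is where the real technical work resides.
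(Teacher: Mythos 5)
There is a genuine gap in your argument for part~(1). Your opening move --- applying $e$ to $xu(x)=x$ and Axiom \ref{Axiom e(xy)=e(x)y+e(y)x} to get $e(x)=e(x)u(x)+xe(u(x))$ --- matches the paper, and your observation that absorption reduces the claim to the single equality $u(x)e(x)=xe(u(x))$ is correct. But the method you propose to establish that equality is circular: expanding $u(x)e(x)=u(x)(-x+x)$ via Axiom \ref{Axiom distributivity} and Axiom \ref{Axiom s(xy)=s(x)y} yields $e(x)=u(x)e(x)+e(u(x))x$, and expanding $xe(u(x))=x(-u(x)+u(x))$ yields $e(x)=xe(u(x))+e(x)u(x)$ --- in both cases you merely recover the identity you started from, with the ``correction term'' being precisely the other summand you are trying to evaluate. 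Moreover, bounding that correction by $e(x)$ (your stated goal) is automatic and useless: each summand of a sum of magnitudes equal to $e(x)$ is trivially $\leq e(x)$; what you need is that \emph{each summand equals} $e(x)$, i.e.\ that the two summands coincide, and none of the tools you list (Proposition \ref{order sum}, Theorem \ref{emaiory e+y=e}, Axiom \ref{Axiom escala}) can produce that. The missing ingredient is Axiom \ref{e(u(x))=e(x)d(x)}, $e(u(x))=e(x)x^{-1}$, which you never invoke in part~(1): it gives $xe(u(x))=xe(x)x^{-1}=e(x)u(x)$, so the two summands are literally equal, and idempotency of magnitude addition then yields $e(x)=e(x)u(x)+e(x)u(x)=e(x)u(x)$. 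This is exactly how the paper closes the argument.

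Your part~(2) is salvageable once part~(1) is repaired: applying $e$ to $xx^{-1}=u(x)$ and multiplying by $x^{-1}$ (using Proposition \ref{dist neutrices} to distribute over the two magnitudes) does give $e(x)x^{-1}x^{-1}\leq e(x^{-1})$, and the reverse comparison can be extracted as you sketch. It is, however, considerably more roundabout than the paper's route, which again leans on Axiom \ref{e(u(x))=e(x)d(x)} applied to both $x$ and $x^{-1}$ to get the single identity $e(x^{-1})x=e(u(x^{-1}))=e(u(x))=e(x)x^{-1}$ and then multiplies through by $x^{-1}$, using part~(1) for $x^{-1}$ to finish in one line without any inequality or antisymmetry argument.
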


\begin{proof}
\ref{eu=xe(u)}. Suppose that $x\neq e(x)$. Then $e\left( x\right) =e\left(
xu\left( x\right) \right) =e\left( x\right) u\left( x\right) +xe\left(
u\left( x\right) \right) $ by Axiom \ref{axiom neut mult} and Axiom \ref%
{Axiom e(xy)=e(x)y+e(y)x}. Hence applying Axiom \ref{e(u(x))=e(x)d(x)} 
\begin{equation*}
e(x)=e(x)u(x)+xe(x)x^{-1}=e(x)u(x)+e(x)u(x)=e(x)u(x).
\end{equation*}%
This implies that 
\begin{equation*}
xe(u(x))=xe(x)x^{-1}=e(x)u(x)=e(x).
\end{equation*}%
\bigskip \ref{e(d(x))=edd}. Note that by Axiom \ref{e(u(x))=e(x)d(x)} 
\begin{equation*}
e(x^{-1})x=e(x^{-1})(x^{-1})^{-1}=e(u(x^{-1}))=e(u(x))=e(x)x^{-1}.
\end{equation*}%
Hence%
\begin{equation*}
e(x^{-1})=e(x^{-1})u(x^{-1})=e(x^{-1})u(x)=e(x^{-1})xx^{-1}=e(x)x^{-1}x^{-1}.
\end{equation*}
\end{proof}

\subsection{Order and the product\label{subsection preservation properties}}

We first investigate how the order behaves under multiplication by
magnitudes. As a result we obtain that the product of two positive numbers
is still positive, implying that squares are positive and a fortiori the
unity. Inequalities are reversed when multiplying by a negative element upon
the application of a correction term. The rules of the sign of the inverse
are as in rings.

Next proposition states that the order relation is preserved under scaling,
and Lemma \ref{xemaioree} and Proposition \ref{emaioree} indicate that the
neglection of the term $e(x)e(y)$ in the formula for the magnitude of the
product of Axiom \ref{Axiom e(xy)=e(x)y+e(y)x} is justified, as anounced in
the previous subsection.

\begin{proposition}
\label{xe(y)=xe(z)}Let $S$ be a solid and let $x,y,z\in S$. If $e(y)\leq
e(z) $ then $xe(y)\leq xe(z)$.
\end{proposition}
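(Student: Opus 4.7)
My plan is to reduce the inequality to the case of a positive scalar and then invoke the solid's order-multiplication compatibility.

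First, I would observe that by Axiom \ref{Axiom escala} both $xe(y)$ and $xe(z)$ are magnitudes. Moreover, by Proposition \ref{es=ex} the product of a magnitude with $-x$ coincides with its product with $x$, so
\[
xe(y) = |x|e(y), \qquad xe(z) = |x|e(z).
\]
It therefore suffices to establish the statement after replacing $x$ by the positive element $|x|$.

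For a positive multiplier, the order-multiplication compatibility of a solid applied to the hypothesis $e(y)\leq e(z)$ yields $|x|e(y)\leq |x|e(z)$, which by the displayed identities is the desired $xe(y)\leq xe(z)$.

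The main obstacle is that ordinary distributivity fails in a general solid, so one cannot simply left-multiply the absorption identity $e(y)+e(z)=e(z)$ (which, by Proposition \ref{e(x)maiore(y)iff}, is equivalent to the hypothesis) through by $x$ and read off the desired absorption $xe(y)+xe(z)=xe(z)$ at the product level. The reduction to $|x|$, made possible by the sign-insensitivity of multiplication by magnitudes (Proposition \ref{es=ex}), circumvents this difficulty by converting the statement into a plain instance of compatibility of the order with multiplication by a positive element, bypassing the need to control any correction term.
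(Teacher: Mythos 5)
Your reduction of the negative case to the positive one via Proposition \ref{es=ex} is exactly what the paper does, and for zeroless positive $x$ the conclusion is indeed an instance of Axiom \ref{compat mult}. However, there is a gap when $x$ is itself a magnitude. In that case $\left\vert x\right\vert =x=e(x)$, which is positive only in the weak sense $e(x)\leq x$ used in Definition \ref{Absolutevalue}, whereas Axiom \ref{compat mult} requires the \emph{strict} inequality $e(x)<x$ for the multiplier. So the step ``for a positive multiplier, order-multiplication compatibility yields $\left\vert x\right\vert e(y)\leq \left\vert x\right\vert e(z)$'' does not apply to $\left\vert x\right\vert$ when $x=e(x)$, and the inequality $e(x)e(y)\leq e(x)e(z)$ is still left to prove; it is not a degenerate case that can be absorbed into the others.

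The paper closes exactly this case with Axiom \ref{Axiom Amplification}: since $e(e(y))=e(y)\leq e(y)\leq e(z)$ by Proposition \ref{theoremassembly}.\ref{assembly3} and the hypothesis, that axiom gives $e(x)e(y)\leq e(x)e(z)$ directly. Adding this third case (the paper splits into $x=e(x)$, $e(x)<x$ and $x<e(x)$) completes your argument; apart from this omission your proof coincides with the paper's.
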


\begin{proof}
If $e\left( x\right) =x$ the result follows by Axiom \ref{Axiom
Amplification}. If $e(x)<x$, by Axiom \ref{compat mult}\ one has $xe(y)\leq
xe(z)$. If $x<e(x)$, then\ $e(-x)=e\left( x\right) <-x$. Hence\ $%
-xe(y)\leq-xe(z)$ by compatibility with multiplication. By Proposition \ref%
{es=ex}\ one has $xe(y)\leq xe(z)$.
\end{proof}

\begin{lemma}
\label{xemaioree}Let $S$ be a solid and let $x,y\in S$. Then $e(x)e(y)\leq
xe(y)$.
\end{lemma}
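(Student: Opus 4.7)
The plan is to reduce to the case where $x$ is positive by a symmetry argument, then apply compatibility of multiplication with a magnitude.

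First, I would split on the sign of $x$. If $e(x) \leq x$, then since $e(y)$ is a magnitude, Axiom \ref{Axiom Amplification} (multiplication of an inequality by a magnitude) applied to $e(x)\leq x$ directly yields $e(x)e(y)\leq xe(y)$. This is exactly the same move the paper uses in the first case of the proof of Proposition \ref{xe(y)=xe(z)}.

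The remaining case is $x<e(x)$. Here I would use the equivalence stated just after Definition \ref{Absolutevalue} (namely $e(x)\leq x$ iff $-x\leq e(x)$, in its strict version) to conclude $e(x)<-x$. Combined with $e(-x)=e(x)$ from Proposition \ref{prop sym}.\ref{assembly6}, this shows that $-x$ is strictly positive, so the positive case already handled gives $e(-x)e(y)\leq (-x)e(y)$, i.e.\ $e(x)e(y)\leq (-x)e(y)$.

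To finish, I need to identify $(-x)e(y)$ with $xe(y)$. By Axiom \ref{Axiom s(xy)=s(x)y} we have $(-x)e(y)=-(xe(y))$, and since $xe(y)$ is a magnitude by Axiom \ref{Axiom escala}, Proposition \ref{prop sym}.\ref{assembly6} gives $-(xe(y))=xe(y)$. This mirrors the way Proposition \ref{es=ex} is exploited in the last case of the proof of Proposition \ref{xe(y)=xe(z)}. There is no real obstacle here; the only subtle point is being careful with the sign manipulation on the right-hand side of the product, for which Axiom \ref{Axiom escala} together with the self-symmetry of magnitudes does the job cleanly.
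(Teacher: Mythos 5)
Your proof is correct and follows essentially the same route as the paper's: a case split on the sign of $x$, with Axiom \ref{Axiom Amplification} handling the positive case and the reduction via $e(x)<-x$ together with $(-x)e(y)=xe(y)$ (Proposition \ref{es=ex}) handling the negative case. The only differences are cosmetic: you merge the subcases $e(x)=x$ and $e(x)<x$ under a single application of Amplification (the paper cites compatibility with multiplication for the strictly positive subcase), and you re-derive Proposition \ref{es=ex} instead of simply citing it.
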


\begin{proof}
If $e\left( x\right) =x$ there is nothing to show. If $e(x)<x$, the result
follows by compatibility with multiplication\ because\ $e(y)\leq e(y)$. If $%
x<e(x)$, then $e(x)<-x$. Then $e(x)e(y)\leq-xe(y)=xe(y)$ by Proposition \ref%
{es=ex} and Axiom \ref{Axiom Amplification}.
\end{proof}

\begin{proposition}
\label{emaioree}Let $S$ be a solid and let $x,y\in S$. Then $e(x)e(y)\leq
e(xy)$.
\end{proposition}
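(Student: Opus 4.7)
The plan is to chain together Lemma \ref{xemaioree}, the formula for $e(xy)$ supplied by Axiom \ref{Axiom e(xy)=e(x)y+e(y)x}, and a basic absorption property of magnitudes. First, applying Lemma \ref{xemaioree} with the roles of $x$ and $y$ interchanged (and using commutativity of multiplication) gives $e(x)e(y) \leq e(x)y$. Next, Axiom \ref{Axiom e(xy)=e(x)y+e(y)x} rewrites $e(xy) = e(x)y + e(y)x$, while Axiom \ref{Axiom escala} guarantees that both $e(x)y$ and $e(y)x$ are magnitudes.

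The only substantive step is the auxiliary observation that if $m_{1}$ and $m_{2}$ are any two magnitudes, then $m_{1} \leq m_{1}+m_{2}$. By absorption (Proposition \ref{theoremassembly}.\ref{assembly0}), the sum $m_{1}+m_{2}$ coincides with either $m_{1}$ or $m_{2}$. In the first case the inequality is immediate; in the second, Proposition \ref{e(x)maiore(y)iff} yields $m_{1} \leq m_{2}=m_{1}+m_{2}$. Taking $m_{1}=e(x)y$ and $m_{2}=e(y)x$ gives $e(x)y \leq e(x)y + e(y)x = e(xy)$.

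Transitivity then finishes the proof: $e(x)e(y) \leq e(x)y \leq e(xy)$. The argument is essentially just a careful assembly of three previously established facts, and I do not expect any serious obstacle beyond verifying the small magnitude inequality above, which in turn rests only on absorption and the characterization of $\leq$ for magnitudes.
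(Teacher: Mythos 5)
Your proof is correct and follows essentially the same route as the paper's: both arguments rest on Lemma \ref{xemaioree} combined with the decomposition $e(xy)=e(x)y+e(y)x$ from Axiom \ref{Axiom e(xy)=e(x)y+e(y)x}. The only difference is cosmetic --- the paper applies the lemma to both summands and adds the two inequalities, whereas you apply it once and then chain through the magnitude inequality $m_{1}\leq m_{1}+m_{2}$; if anything, your explicit justification of that inequality via absorption and Proposition \ref{e(x)maiore(y)iff} spells out a step the paper leaves implicit.
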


\begin{proof}
By Lemma \ref{xemaioree} it holds that $e(x)e(y)\leq xe(y)$ and $%
e(x)e(y)\leq ye(x)$. Hence using Axiom \ref{Axiom e(xy)=e(x)y+e(y)x},%
\begin{equation*}
e(x)e(y)\leq xe(y)+ye(x)=e(xy)\text{.}
\end{equation*}
\end{proof}

We are now able to prove that products of two positive elements and squares
are positive.

\begin{theorem}
\label{mult positivos}Let $S$ be a solid and let $x,y\in S$. If $x$ and $y$
are both positive then $e\left( xy\right) \leq xy$.
\end{theorem}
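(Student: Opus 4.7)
My plan is to reduce the inequality $e(xy) \leq xy$ to two symmetric sub-inequalities $xe(y) \leq xy$ and $ye(x) \leq xy$, using the formula for the magnitude of a product together with absorption. Concretely, I would first invoke Axiom \ref{Axiom e(xy)=e(x)y+e(y)x} to write $e(xy) = xe(y) + ye(x)$. By Axiom \ref{Axiom escala}, each of $xe(y)$ and $ye(x)$ is a magnitude, so Theorem \ref{assembly7} allows me to apply the absorption property (Proposition \ref{theoremassembly}.\ref{assembly0}) to their sum, yielding $e(xy) = xe(y)$ or $e(xy) = ye(x)$. It is therefore enough to bound each of $xe(y)$ and $ye(x)$ by $xy$.

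For the bound $xe(y) \leq xy$, I would split on whether $x$ is a magnitude. If $e(x) = x$, then by Proposition \ref{e(x)y=e(xy)} one has $xy = e(x)y = e(xy)$, so $xy$ itself is a magnitude; then the positivity hypothesis $e(y) \leq y$ combined with Axiom \ref{Axiom Amplification} (order compatibility when multiplying by a magnitude) gives $xe(y) = e(x)e(y) \leq e(x)y = xy$. If instead $e(x) < x$, I would invoke compatibility with multiplication (Axiom \ref{compat mult}) on the inequality $e(y) \leq y$, which is exactly the scheme used in the proof of Lemma \ref{xemaioree}, to obtain $xe(y) \leq xy$ directly. The bound $ye(x) \leq xy$ follows by the same argument with the roles of $x$ and $y$ interchanged.

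Combining the two bounds with the absorption dichotomy then gives $e(xy) \leq xy$, as desired. I do not expect any real obstacle here: the only delicate point is making sure the compatibility-with-multiplication axiom is applied in the form that requires the strict inequality $e(x) < x$, which is why the case $e(x) = x$ must be peeled off separately and handled via Proposition \ref{e(x)y=e(xy)} and Axiom \ref{Axiom Amplification}. The proof is essentially a clean bookkeeping step once the preparatory material of Subsection \ref{subsection preservation properties} is in place.
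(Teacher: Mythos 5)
Your proof is correct, but it combines the two key bounds differently from the paper, and the difference is worth noting. Both arguments rest on the same pair of inequalities $xe(y)\leq xy$ and $ye(x)\leq xy$. The paper dispatches the case $x=e(x)$ or $y=e(y)$ up front (there $e(xy)=xy$ outright, by Axiom \ref{Axiom escala} and Theorem \ref{assembly7}) and, in the zeroless case, adds the two bounds to get $e(xy)=xe(y)+ye(x)\leq xy+xy$; it then needs an extra cancellation step (adding $-(xy)$ to obtain $-xy\leq xy$, whence $e(xy)\leq xy$) because, unlike in a ring, $a\leq c$ and $b\leq c$ do not give $a+b\leq c$. You avoid that sum entirely by observing that $xe(y)$ and $ye(x)$ are magnitudes (Axiom \ref{Axiom escala} plus Theorem \ref{assembly7}) so that absorption (Proposition \ref{theoremassembly}.\ref{assembly0}) collapses $e(xy)$ to a single one of the two terms, each of which you have bounded by $xy$; you also fold the magnitude case into the bounds themselves via Axiom \ref{Axiom Amplification} rather than treating it separately. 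Your route is arguably cleaner at the final step, since it replaces the slightly delicate sign argument $-xy\leq xy\Rightarrow e(xy)\leq xy$ by a direct appeal to absorption; the paper's route has the minor advantage of not needing the observation that the two summands are themselves magnitudes. All the individual steps you invoke (trichotomy $e(x)=x$ versus $e(x)<x$ for a positive element, Axiom \ref{compat mult} in the strict case, Proposition \ref{e(x)y=e(xy)} in the magnitude case) are sound, so there is no gap.
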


\begin{proof}
Suppose $x$ and $y$ are both positive. If $y=e\left( y\right) $ or $%
x=e\left( x\right) $ by Axiom \ref{Axiom escala} and Theorem \ref{assembly7}
one has $e\left( xy\right) =xy$. If $e\left( x\right) <x$ and $e\left(
y\right) <y$, by Axiom \ref{compat mult}, $xe(y)\leq xy$ and $ye(x)\leq xy$.
Then, using Axiom \ref{Axiom e(xy)=e(x)y+e(y)x},%
\begin{equation*}
e(xy)=xe(y)+ye(x)\leq xy+xy\text{.}
\end{equation*}%
By adding $-(xy)$ to both sides of the equation one obtains $-xy\leq xy$.
Then $e(xy)\leq xy$.
\end{proof}

\begin{corollary}
\label{Cor positive}Let $S$ be a solid and let $x\in S^{\ast }$. Then
\end{corollary}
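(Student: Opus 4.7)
The plan is to derive, from Theorem~\ref{mult positivos}, that the square $x^{2}$ of any zeroless $x$ is strictly positive, and then to deduce the analogous statement for the unity $u(x)$. In the absence of the usual ring identities one cannot directly say ``$x^{2}$ is a square of a positive element,'' so the first step is to reduce to the positive case via the absolute value.

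First I would verify that $x^{2}=|x|^{2}$. If $e(x)\leq x$ this is immediate from Definition~\ref{Absolutevalue}. If $x<e(x)$, then $|x|=-x$, and I need $(-x)(-x)=x\cdot x$; this follows by two applications of Axiom~\ref{Axiom s(xy)=s(x)y} together with Proposition~\ref{prop sym}.\ref{assembly4}, giving $(-x)(-x)=-(-(x\cdot x))=x^{2}$. Since $|x|$ is positive by construction (using the remark following Definition~\ref{Absolutevalue} to pass from negativity of $x$ to positivity of $-x$), Theorem~\ref{mult positivos} yields $e(x^{2})=e(|x|^{2})\leq |x|^{2}=x^{2}$. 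Combining this with Corollary~\ref{xquadrado dif e}, which ensures $x^{2}\neq e(x^{2})$ whenever $x\in S^{\ast}$, I conclude the strict inequality $e(x^{2})<x^{2}$.

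For the assertion about the unity, I first observe that $u(x)\in S^{\ast}$ by Lemma~\ref{prop dif}.\ref{udife(u)}, so the previous part applies to $u(x)$. It remains to identify $u(x)$ with a square. Since $u(x)$ acts as a unity for itself, the uniqueness in Axiom~\ref{axiom neut mult} gives $u(u(x))=u(x)$, and therefore $u(x)=u(x)\cdot u(u(x))=u(x)^{2}$. Applying the first part to the zeroless element $u(x)$ then gives $e(u(x))=e(u(x)^{2})<u(x)^{2}=u(x)$, so the unity is positive.

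The main obstacle is the careful bookkeeping of signs: unlike in a ring, the identity $(-x)(-x)=x\cdot x$ must be re-derived, and Theorem~\ref{mult positivos} only applies when both factors are already known to be positive, which is precisely why the detour through $|x|$ is needed. Once that reduction is in place, both parts are short consequences of results already established.
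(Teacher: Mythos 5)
Your proof is correct and follows essentially the same route as the paper's: reduction to the positive case via $-x$ (you package this as $|x|^{2}=x^{2}$, making explicit the identity $(-x)(-x)=x^{2}$ that the paper uses tacitly), then Theorem~\ref{mult positivos} for the inequality, Corollary~\ref{xquadrado dif e} for strictness, and the idempotency $u(x)u(x)=u(x)$ together with Lemma~\ref{prop dif} for the unity. The only (harmless) difference is that you do not treat the case $x=e(x)$ in the ``moreover'' clause, which is vacuous under the stated hypothesis $x\in S^{\ast}$ but which the paper's proof nevertheless records.
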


\begin{enumerate}
\item \label{quadrado positivo} $e(x^{2})\leq x^{2}$. Moreover, equality
holds if and only if $x=e\left( x\right) $.

\item \label{u is positive} $e\left( u\left( x\right) \right) <u\left(
x\right) $.
\end{enumerate}

\begin{proof}
\ref{quadrado positivo}. We show firstly that $e(x^{2})\leq x^{2}$. If $x$
is positive, the result follows from Theorem \ref{mult positivos}. If $x$ is
negative, then $-x$ is positive. Hence 
\begin{equation*}
e(x^{2})=e((-x)^{2})\leq(-x)^{2}=x^{2}.
\end{equation*}

Secondly, if $x=e\left( x\right) $, Proposition \ref{emaioree} implies that $%
x^{2}=x^{2}+e\left( x^{2}\right) =e(x)^{2}+e\left( x^{2}\right) =e\left(
x^{2}\right) $. If $x\neq e\left( x\right) $, by Corollary \ref{xquadrado
dif e} one has $x^{2}\neq e\left( x^{2}\right) $. Hence equality holds if
and only if $x=e\left( x\right) $.

\ref{u is positive}. By Part \ref{quadrado positivo} because $u\left(
x\right) u\left( x\right) =u\left( x\right) $.
\end{proof}

In ordered rings inequalities are reversed upon multiplying by a negative
element. This remains true for zeroless elements.

\begin{proposition}
\label{lema_comp_mult}Let $S$ be a solid and let $x,y\in S$. If $e\left(
x\right) <x$ and $y<e\left( y\right) $ then $xy<e\left( xy\right) $.
\end{proposition}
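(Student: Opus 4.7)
The plan is to reduce to the case of two positive zeroless elements by replacing $y$ with $-y$, and then invoke Theorems \ref{mult positivos} and \ref{zerodiv}.

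First, I would verify that $-y$ is positive and not a magnitude. From $y<e(y)$, the equivalence $e(z)\le z \iff -z\le e(z)$ recorded after Definition \ref{Absolutevalue}, together with $e(-y)=e(y)$ from Proposition \ref{prop sym}.\ref{assembly6}, yields $e(-y)<-y$. Since $x$ is positive with $e(x)<x$ as well, Theorem \ref{mult positivos} gives $e\bigl(x(-y)\bigr)\le x(-y)$. As neither $x$ nor $-y$ is a magnitude, Theorem \ref{zerodiv} forces $x(-y)\ne e\bigl(x(-y)\bigr)$, upgrading the previous inequality to the strict one $e\bigl(x(-y)\bigr)<x(-y)$.

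Next, I would translate this back to a statement about $xy$. Using Axiom \ref{Axiom s(xy)=s(x)y} to rewrite $x(-y)=-(xy)$ and Proposition \ref{prop sym}.\ref{assembly6} to identify $e(-(xy))=e(xy)$, the displayed strict inequality becomes $e(xy)<-(xy)$. Applying the equivalence $e(z)\le z \iff -z\le e(z)$ to $z=-xy$ converts this into $xy\le e(xy)$, and strictness is retained because $xy\ne e(xy)$: otherwise applying the symmetric function would give $-(xy)=-e(xy)=e(xy)$, contradicting $x(-y)\ne e(x(-y))$. Hence $xy<e(xy)$.

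The main subtlety is maintaining strict inequalities at each step, since the compatibility axioms normally deliver only $\le$. Strictness is imported once, via Theorem \ref{zerodiv}, which guarantees that the product of two zeroless elements is again zeroless; all subsequent strictness follows by routine manipulation with the symmetric function. Apart from that, the argument is a straightforward symmetry reduction to the already established case of two positive factors.
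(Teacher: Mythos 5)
Your proof is correct, but it takes a genuinely different route from the paper's. The paper argues directly with the correction term $e(x)e(y)$: from $e(y)<-y$ it deduces $e(x)e(y)\leq x(-y)$, and since $xy=xy+e(x)e(y)$ by Proposition \ref{emaioree}, compatibility with addition yields $xy=xy+e(x)e(y)\leq xy+x(-y)=e(xy)$, with strictness supplied by Theorem \ref{zerodiv} exactly as in your argument. You instead flip the sign of $y$ and apply Theorem \ref{mult positivos} as a black box to the positive pair $x$ and $-y$, then transport the conclusion back through $x(-y)=-(xy)$ and $e(-(xy))=e(xy)$. Your reduction is the more economical one conceptually, since it reuses the already-established positive case rather than redoing essentially the same estimate (the paper's computation closely mirrors its own proof of Theorem \ref{mult positivos}); the price is the extra sign bookkeeping --- the equivalence $e(z)\leq z \iff -z\leq e(z)$, the identities $-(-w)=w$ and $e(-w)=e(w)$ --- all of which you handle correctly, including the point that strictness must be imported once from Theorem \ref{zerodiv} and then preserved under negation. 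Both proofs are valid; yours could even be stated as a one-line corollary of Theorem \ref{mult positivos} and Theorem \ref{zerodiv}.
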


\begin{proof}
Suppose $e\left( x\right) <x$ and $y<e\left( y\right) $. Then $e\left(
y\right) <-y$, hence $e\left( x\right) e\left( y\right) \leq x\left(
-y\right) $. Then by Proposition \ref{emaioree} and compatibility with
addition 
\begin{equation*}
xy=xy+e\left( x\right) e\left( y\right) \leq xy+x\left( -y\right)
=xy-xy=e\left( xy\right) .
\end{equation*}%
Because $x\neq e\left( x\right) $ and $y\neq e\left( y\right) $ one has $%
xy\neq e\left( xy\right) $, by Theorem \ref{zerodiv}. Hence $xy<e\left(
xy\right) $.
\end{proof}

In the general case one must take into account the magnitudes of the
products of the elements.

\begin{proposition}
Let $S$ be a solid and let $x,y,z\in S$. Suppose that $y\leq z$. If $%
x<e\left( x\right) $ then $xz+e(xy)\leq xy+e(xz)$.
\end{proposition}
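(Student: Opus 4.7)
My plan is to reduce the inequality to the positive-multiplier case, which is already handled by Axiom \ref{compat mult}. The first step is to observe that $-x$ is strictly positive: since $x < e(x)$, adding $-x$ to both sides gives $e(x) = x + (-x) < e(x) + (-x) = -x$, where the last equality uses $e(-x) = e(x)$ (Proposition \ref{prop sym}.\ref{assembly6}) together with Axiom \ref{assemblyneut} in the form $-x + e(-x) = -x$. This mirrors the opening step of the proof of Proposition \ref{lema_comp_mult}.

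Having established $e(-x) < -x$, I apply Axiom \ref{compat mult} to the hypothesis $y \leq z$ to obtain $(-x)y \leq (-x)z$. By Axiom \ref{Axiom s(xy)=s(x)y}, $(-x)y = -(xy)$ and $(-x)z = -(xz)$, so the inequality rewrites as
\[
-(xy) \leq -(xz).
\]

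The final step is a pair of additions. Adding $xy$ to both sides collapses the left-hand side to $-(xy) + xy = e(xy)$, yielding $e(xy) \leq -(xz) + xy$; adding $xz$ then gives $xz + e(xy) \leq -(xz) + xz + xy = e(xz) + xy$, which is the desired inequality. The only point to be careful about is that $-a + a$ equals the magnitude $e(a)$ rather than a global zero, but this is precisely what makes the correction terms $e(xy)$ and $e(xz)$ appear on the right sides. I do not expect a genuine obstacle; the argument is essentially the solid analogue of the familiar fact that multiplication by a negative element reverses inequalities, with the magnitudes absorbing the failure of exact cancellation.
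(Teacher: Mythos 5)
Your proof is correct and follows essentially the same route as the paper's: establish $e(-x)<-x$, apply Axiom \ref{compat mult} and Axiom \ref{Axiom s(xy)=s(x)y} to get $-(xy)\leq -(xz)$, then apply Axiom \ref{(OA)compoper} twice so that the magnitudes $e(xy)$ and $e(xz)$ appear as the correction terms. The only cosmetic difference is that you spell out the two additions explicitly where the paper merely cites the axiom twice.
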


\begin{proof}
Suppose that $y\leq z$. If $x<e\left( x\right) $, then $e(-x)=e\left(
x\right) <-x$. By compatibility with multiplication $\left( -x\right) y\leq
\left( -x\right) z$. Then $-\left( xy\right) \leq -\left( xz\right) $ by
Axiom \ref{Axiom s(xy)=s(x)y}. Applying Axiom \ref{(OA)compoper} twice, we
see that $xz+e(xy)\leq xy+e(xz)$.
\end{proof}

Within ordered rings with unity it holds that $0<1$, the inverse for
multiplication of a positive element is positive, and the inverse of an
element larger than $1$ is smaller than $1$. We adapt these properties to
solids.

\begin{proposition}
\label{1/x is positive}Let $S$ be a solid and let $x\in S^{\ast}$. Then $%
e\left( x\right) <x$ if and only if $e\left( x^{-1}\right) <x^{-1}$.
\end{proposition}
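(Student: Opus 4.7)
The plan is to establish the equivalence via a contradiction argument using the multiplication rule for a positive element and a negative element, combined with the fact that the individualized unity is always positive.

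First I would record the ambient facts I intend to use. Since $x\in S^{\ast}$, the remark following Lemma~\ref{prop dif} gives that $x^{-1}\in S^{\ast}$ as well. By order trichotomy together with $x\neq e(x)$ and $x^{-1}\neq e(x^{-1})$, each of $x$ and $x^{-1}$ is either positive (in the sense $e(\,\cdot\,)<\,\cdot\,$) or negative. Finally, Corollary~\ref{Cor positive}.\ref{u is positive} provides that $e(u(x))<u(x)$, and of course $xx^{-1}=u(x)$.

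For the forward implication, assume $e(x)<x$ and suppose, towards a contradiction, that $x^{-1}<e(x^{-1})$. Applying Proposition~\ref{lema_comp_mult} with the positive element $x$ and the negative element $x^{-1}$ yields
\begin{equation*}
u(x)=xx^{-1}<e(xx^{-1})=e(u(x)),
\end{equation*}
which contradicts Corollary~\ref{Cor positive}.\ref{u is positive}. Hence $e(x^{-1})<x^{-1}$.

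The reverse implication is entirely symmetric: assume $e(x^{-1})<x^{-1}$ and suppose $x<e(x)$; then Proposition~\ref{lema_comp_mult} applied to the positive element $x^{-1}$ and the negative element $x$ gives $x^{-1}x<e(x^{-1}x)$, i.e.\ $u(x)<e(u(x))$, again contradicting Corollary~\ref{Cor positive}.\ref{u is positive}. I do not anticipate a genuine obstacle; the only subtle point is ensuring that the dichotomy positive/negative is available, which requires only that $x$ and $x^{-1}$ are both zeroless, a fact already established in Subsection~\ref{subsection product}.
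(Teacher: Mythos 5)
Your proof is correct, and it takes a genuinely different route from the paper's. You argue by contradiction from the sign rule for products: since $x^{-1}$ is zeroless (the remark after Lemma \ref{prop dif} in Subsection \ref{subsection product}) and the order is total, $x^{-1}$ is strictly positive or strictly negative; if $x$ were positive and $x^{-1}$ negative, Proposition \ref{lema_comp_mult} would give $u\left( x\right) =xx^{-1}<e\left( u\left( x\right) \right) $, contradicting Corollary \ref{Cor positive}.\ref{u is positive}, and the converse is symmetric. Both results you invoke are proved in the paper before this proposition and do not depend on it, so there is no circularity. The paper instead argues directly: it shows $e\left( x^{-1}\right) =xe\left( x^{-1}x^{-1}\right) \leq xx^{-1}x^{-1}=x^{-1}$ by combining Theorem \ref{equivalences}, Axiom \ref{Axiom e(xy)=e(x)y+e(y)x} and compatibility with multiplication, and then upgrades $\leq $ to $<$ using zerolessness of $x^{-1}$; its converse is obtained by applying the forward implication to $x^{-1}$ via $\left( x^{-1}\right) ^{-1}=x$. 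Your argument is the classical field-theoretic one (the inverse of a positive element must be positive, else their product, the unity, would be negative) and is shorter and more conceptual, needing no computation with magnitudes of inverses; the paper's computation is heavier but produces the explicit inequality $e\left( x^{-1}\right) \leq x^{-1}$ without any case distinction on the sign of $x^{-1}$.
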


\begin{proof}
Assume that $e\left( x\right) <x$. Because $x\neq e\left( x\right) $, by
Lemma \ref{prop dif}.\ref{udife} one has $x^{-1}\neq e\left( x^{-1}\right) $%
. By Theorem \ref{zerodiv} one has $x^{-1}x^{-1}\neq e\left(
x^{-1}x^{-1}\right) $. Then by compatibility with multiplication and
Corollary \ref{Cor positive}.\ref{quadrado positivo}%
\begin{equation*}
xe\left( x^{-1}x^{-1}\right) \leq xx^{-1}x^{-1}=u\left( x\right)
x^{-1}=x^{-1}.
\end{equation*}%
Then by Theorem \ref{equivalences}.\ref{eu=xe(u)} and Axiom \ref{Axiom
e(xy)=e(x)y+e(y)x} 
\begin{align*}
e\left( x^{-1}\right) & =u\left( x^{-1}\right) e\left( x^{-1}\right) \\
& =u\left( x\right) e\left( x^{-1}\right) \\
& =xe\left( x^{-1}\right) x^{-1} \\
& =x\left( e\left( x^{-1}\right) x^{-1}+x^{-1}e\left( x^{-1}\right) \right)
\\
& =xe\left( x^{-1}x^{-1}\right) \leq x^{-1}.
\end{align*}%
Hence $e\left( x^{-1}\right) <x^{-1}$.

Conversely, assume that $e\left( x^{-1}\right) <x^{-1}$. By the above $%
e(x)=e((x^{-1})^{-1})<(x^{-1})^{-1}=x$.
\end{proof}

\begin{corollary}
\label{Cor u>e}Let $S$ be a solid and let $x\in S^{\ast}$. If $u\left(
x\right) \leq x$ then $e\left( x\right) <x$.
\end{corollary}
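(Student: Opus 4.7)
The plan is to chain two earlier results to promote $x$ to a positive element, and then upgrade to strict inequality using $x\in S^{\ast}$.

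First, I would invoke Corollary \ref{Cor positive}.\ref{u is positive} to get that the individualized unity is strictly positive, $e(u(x)) < u(x)$, and in particular $e(u(x)) \le u(x)$. Combined with the hypothesis $u(x)\le x$, this gives the chain
\begin{equation*}
e(u(x)) \le u(x) \le x.
\end{equation*}
This is exactly the hypothesis of Proposition \ref{order sum}.\ref{y<=x y positive then x positive} (applied with $y := u(x)$), which yields $e(x) \le x$, i.e.\ $x$ is positive.

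Finally, since by assumption $x\in S^{\ast}$ we have $x \neq e(x)$, so the inequality $e(x)\le x$ must be strict, giving $e(x) < x$ as required.

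I do not expect any real obstacle here; the statement is essentially a packaging of Proposition \ref{order sum}.\ref{y<=x y positive then x positive} together with the fact (already established in Corollary \ref{Cor positive}) that $u(x)$ is always positive. The only thing to watch is not to lose the strict inequality: one must observe that $x\in S^{\ast}$ (hypothesis of the corollary) rules out the equality case $x=e(x)$.
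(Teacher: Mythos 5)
Your proposal is correct and follows essentially the same route as the paper: apply Proposition \ref{order sum}.\ref{y<=x y positive then x positive} with $y=u(x)$, whose positivity comes from Corollary \ref{Cor positive}.\ref{u is positive}. You are in fact slightly more careful than the paper's one-line proof in spelling out that the hypothesis $x\in S^{\ast}$ upgrades $e(x)\leq x$ to the strict inequality $e(x)<x$.
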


\begin{proof}
By Proposition \ref{order sum}.\ref{y<=x y positive then x positive}, with $%
y=u\left( x\right) $, which is positive by Corollary \ref{Cor positive}.\ref%
{u is positive}.
\end{proof}

\begin{proposition}
Let $S$ be a solid and let $x\in S^{\ast}$. Then

\begin{enumerate}
\item \label{umaiord}If $u(x)\leq x$, then $x^{-1}\leq u(x)$.

\item \label{dmaioru}If $e(x)<x$ and $x\leq u(x)$, then $u(x)\leq x^{-1}$.
\end{enumerate}
\end{proposition}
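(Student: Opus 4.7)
The plan is to derive both inequalities by multiplying the given hypothesis by $x^{-1}$, which is strictly positive in both cases, so that compatibility of the order with multiplication (Axiom \ref{compat mult}) applies directly.

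For Part \ref{umaiord}, I would first note that from $u(x)\leq x$ Corollary \ref{Cor u>e} gives $e(x)<x$, and then Proposition \ref{1/x is positive} yields $e(x^{-1})<x^{-1}$, so $x^{-1}$ is strictly positive. Multiplying the hypothesis $u(x)\leq x$ by $x^{-1}$ via compatibility with multiplication produces
\begin{equation*}
u(x)x^{-1}\leq xx^{-1}.
\end{equation*}
The right-hand side is $u(x)$ by Axiom \ref{axiom neut mult}. For the left-hand side, since $u(x^{-1})=x^{-1}(x^{-1})^{-1}=x^{-1}x=u(x)$, one has $u(x)x^{-1}=u(x^{-1})x^{-1}=x^{-1}$. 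Hence $x^{-1}\leq u(x)$.

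For Part \ref{dmaioru}, the assumption $e(x)<x$ gives $e(x^{-1})<x^{-1}$ directly by Proposition \ref{1/x is positive}, so again $x^{-1}$ is strictly positive and compatibility with multiplication applies. Multiplying $x\leq u(x)$ on the right by $x^{-1}$ yields
\begin{equation*}
xx^{-1}\leq u(x)x^{-1},
\end{equation*}
which simplifies, by the same identifications $xx^{-1}=u(x)$ and $u(x)x^{-1}=u(x^{-1})x^{-1}=x^{-1}$, to $u(x)\leq x^{-1}$.

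The only mildly delicate step is the simplification $u(x)x^{-1}=x^{-1}$, which rests on the coincidence $u(x)=u(x^{-1})$; this is essentially a consequence of the axioms governing the individualized unity and the multiplicative inverse, and was already used implicitly in the proof of Proposition \ref{1/x is positive}. Everything else is routine application of previously established results, so I do not anticipate any real obstacle.
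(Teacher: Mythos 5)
Your proof is correct and follows essentially the same route as the paper: establish $e(x^{-1})<x^{-1}$ via Corollary \ref{Cor u>e} and Proposition \ref{1/x is positive}, then multiply the hypothesis by $x^{-1}$ using Axiom \ref{compat mult} and simplify with $u(x^{-1})=u(x)$. The only cosmetic difference is that the paper takes $u(x^{-1})=u(x)$ directly from Axiom \ref{axiom sym mult} rather than rederiving it via $(x^{-1})^{-1}=x$.
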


\begin{proof}
\ref{umaiord}. Suppose that $u(x)\leq x$. Then by Corollary \ref{Cor u>e}
one has $e\left( x\right) <x$ and then $e(x^{-1})<x^{-1}$ by Proposition \ref%
{1/x is positive}. Hence Axiom \ref{compat mult} implies that $%
x^{-1}=x^{-1}u(x)\leq x^{-1}x=u\left( x\right) $.

\ref{dmaioru}. Suppose that $e(x)<x$ and $x\leq u(x)$. Then $%
e(x^{-1})<x^{-1} $ by Proposition \ref{1/x is positive}. Again Axiom \ref%
{compat mult} implies that $u\left( x\right) =xx^{-1}\leq
u(x)x^{-1}=u(x^{-1})x^{-1}=x^{-1} $.
\end{proof}

Finally we show that if a positive element is larger than its inverse, its
magnitude must be larger than the magnitude of the inverse.

\begin{proposition}
\label{eumaiore(d)}Let $S$ be a solid and let $x\in S^{\ast}$. If $e\left(
x^{-1}\right) <x^{-1}\leq x$ then $e\left( x^{-1}\right) \leq e\left(
x\right) $.
\end{proposition}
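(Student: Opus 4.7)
The plan is to reduce the claim to an algebraic identity by combining the hypothesis $x^{-1} \leq x$ with the formula $e(x^{-1}) = e(x) x^{-1} x^{-1}$ from Theorem \ref{equivalences}.\ref{e(d(x))=edd}. First I would note that, by Proposition \ref{1/x is positive}, the assumption $e(x^{-1}) < x^{-1}$ already forces $e(x) < x$, so both $x$ and $x^{-1}$ are positive zeroless elements. In particular, compatibility with multiplication (Axiom \ref{compat mult}) applies with $x^{-1}$ as the multiplier.

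Multiplying the inequality $x^{-1} \leq x$ by the positive zeroless element $x^{-1}$ then yields
\[
(x^{-1})^{2} \;\leq\; x^{-1} x \;=\; u(x).
\]
Next, I would amplify this inequality by the magnitude $e(x)$ using Axiom \ref{Axiom Amplification} (the same invocation used in the proofs of Lemma \ref{xemaioree} and Proposition \ref{xe(y)=xe(z)} to push an order relation through multiplication by a magnitude), obtaining
\[
e(x)\,(x^{-1})^{2} \;\leq\; e(x)\,u(x).
\]
The left-hand side is $e(x^{-1})$ by Theorem \ref{equivalences}.\ref{e(d(x))=edd}, while the right-hand side collapses to $e(x)$ by Theorem \ref{equivalences}.\ref{eu=xe(u)}. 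This delivers $e(x^{-1}) \leq e(x)$.

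The main obstacle is spotting the correct intermediate quantity: one must recognise that $u(x)$ is the natural upper bound to aim for, precisely because it is the unique element whose product with the magnitude $e(x)$ absorbs back to $e(x)$ itself. Once that target is identified, the proof is essentially two applications of compatibility/amplification plus the two identities of Theorem \ref{equivalences}; a minor point to verify is that Axiom \ref{Axiom Amplification} indeed licenses multiplying a general inequality by a magnitude, but the existing uses of this axiom in the preceding subsection confirm this reading.
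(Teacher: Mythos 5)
Your proposal is correct and follows essentially the same route as the paper: multiply $x^{-1}\leq x$ by $x^{-1}$ via Axiom \ref{compat mult} to get $x^{-1}x^{-1}\leq u(x)$, then amplify by $e(x)$ and invoke the two identities of Theorem \ref{equivalences}. The only (harmless) extra step is your appeal to Proposition \ref{1/x is positive}; the paper applies Axiom \ref{Axiom Amplification} directly, with the positivity of the square implicitly supplied by Corollary \ref{Cor positive}.
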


\begin{proof}
Suppose $e\left( x^{-1}\right) <x^{-1}\leq x$. Then $x^{-1}x^{-1}\leq
xx^{-1}=u\left( x\right) $, by Axiom \ref{compat mult}. By Theorem \ref%
{equivalences} and Axiom \ref{Axiom Amplification} one has 
\begin{equation*}
e\left( x^{-1}\right) =x^{-1}x^{-1}e\left( x\right) \leq u(x)e(x)=e\left(
x\right) \text{.}
\end{equation*}
\end{proof}

\subsection{Distributivity with magnitudes\label{Subsection elementary cases}%
}

In this subsection we prove distributivity in some special cases, without
using Axiom \ref{Axiom distributivity}. We show that the distributive
property $x(y+z)=xy+xz$ holds in case $y$ and $z$ are both magnitudes, and
in case one of those is a magnitude, less than or equal to the magnitude of
the remaining element. In particular one always has $x(y+e(y))=xy+xe(y)$.

\begin{proposition}
\label{dist neutrices}Let $S$ be a solid and let $x,y,z\in S$. Then $x\left(
e(y)+e(z)\right) =xe(y)+xe(z)$.
\end{proposition}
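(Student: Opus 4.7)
The plan is to reduce the claim to the absorption law for magnitudes already established in Proposition \ref{theoremassembly}.\ref{assembly0}. By that absorption property, either $e(y)+e(z)=e(y)$ or $e(y)+e(z)=e(z)$; by symmetry it suffices to treat the first case, so the goal becomes to show $xe(y) = xe(y) + xe(z)$ under the assumption $e(y)+e(z)=e(y)$.

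First I would translate the hypothesis into order form: by Proposition \ref{e(x)maiore(y)iff}, the equation $e(y)+e(z)=e(y)$ is equivalent to $e(z) \leq e(y)$. Then I would invoke Proposition \ref{xe(y)=xe(z)} (preservation of order under scaling by arbitrary $x$) to obtain $xe(z) \leq xe(y)$.

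Next I would observe that $xe(y)$ and $xe(z)$ are themselves magnitudes: by Axiom \ref{Axiom escala} each is of the form $e(w)$ for some $w$, and then Theorem \ref{assembly7} (representation) gives $xe(y)=e(xe(y))$ and $xe(z)=e(xe(z))$. Applying Proposition \ref{e(x)maiore(y)iff} once more, the inequality $xe(z) \leq xe(y)$ between magnitudes yields $xe(y)+xe(z) = xe(y)$. Combining this with the assumption $e(y)+e(z)=e(y)$ gives $x(e(y)+e(z)) = xe(y) = xe(y)+xe(z)$, as required. The symmetric case $e(y)+e(z)=e(z)$ is handled identically by swapping the roles of $y$ and $z$.

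There is no serious obstacle here; the statement is essentially a chain of two applications of Proposition \ref{e(x)maiore(y)iff} (once to pass from the additive absorption equation to an order inequality, and once to pass back) with Proposition \ref{xe(y)=xe(z)} in between to transport the inequality through multiplication by $x$. The only point requiring a moment of care is justifying that the products $xe(y)$ and $xe(z)$ really are magnitudes, so that the second application of Proposition \ref{e(x)maiore(y)iff} is legitimate; this is exactly the content of Axiom \ref{Axiom escala} combined with Theorem \ref{assembly7}.
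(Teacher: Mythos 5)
Your proposal is correct and follows essentially the same route as the paper: reduce to the case $e(z)\leq e(y)$, transport the inequality through multiplication by $x$ via Proposition \ref{xe(y)=xe(z)}, and use Axiom \ref{Axiom escala} together with Proposition \ref{e(x)maiore(y)iff} to absorb $xe(z)$ into $xe(y)$. The only cosmetic difference is that you begin from the absorption alternative of Proposition \ref{theoremassembly}.\ref{assembly0} and derive the order inequality, whereas the paper assumes $e(z)\leq e(y)$ without loss of generality and derives the absorption equation.
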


\begin{proof}
We may suppose without loss of generality that $e(z)\leq e(y)$. Then $%
e(y)+e(z)=e(y)$ by Proposition \ref{e(x)maiore(y)iff}. By Proposition \ref%
{xe(y)=xe(z)}\ one has $e(z)x\leq e(y)x$. Then Axiom \ref{Axiom escala} and
Proposition \ref{e(x)maiore(y)iff} imply that $e(y)x+e(z)x=e(y)x$.

Hence%
\begin{equation*}
x\left( e(y)+e(z)\right) =xe(y)=xe(y)+xe(z).
\end{equation*}
\end{proof}

As a consequence of the previous proposition distributivity holds for the
magnitudes.

\begin{corollary}
\label{Cor dist magnitudes}Let $S$ be a solid and let $x,y,z\in S$. Then $%
e(x)\left( e(y)+e(z)\right) =e(x)e(y)+e(x)e(z)$.
\end{corollary}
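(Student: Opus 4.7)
The plan is to derive this identity as an immediate specialization of Proposition \ref{dist neutrices}. That proposition established $x(e(y)+e(z)) = xe(y) + xe(z)$ for arbitrary $x,y,z \in S$, so the natural move is to substitute $e(x)$ for the factor $x$. Since $e$ maps $S$ into $S$ (by Axiom \ref{assemblyneut}), the element $e(x)$ is a legitimate substituent, and the substitution yields exactly
\begin{equation*}
e(x)\bigl(e(y)+e(z)\bigr) = e(x)e(y) + e(x)e(z),
\end{equation*}
which is the desired conclusion.

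There is essentially no obstacle: the corollary is a direct instance of the preceding proposition, with no additional algebraic manipulation or case analysis required. In particular I would not need to invoke the idempotency property $e(e(x))=e(x)$ of Proposition \ref{theoremassembly}.\ref{assembly3} or the representation Theorem \ref{assembly7}, because Proposition \ref{dist neutrices} already applies uniformly to every element of the solid, magnitude or not. The only mild subtlety is conceptual: this confirms that distributivity is never an issue when the scaling factor is itself a magnitude, which is consistent with the informal description in the introduction that distributivity can fail only when $x$ is imprecise relative to $y$ and $z$ that are almost opposite.
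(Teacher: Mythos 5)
Your proof is correct and is exactly the paper's intended argument: the paper presents this as an immediate corollary of Proposition \ref{dist neutrices}, obtained by substituting the magnitude $e(x)$ for the arbitrary factor $x$. Your observation that no idempotency or representation results are needed is also accurate, since Proposition \ref{dist neutrices} quantifies over all elements of the solid.
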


\begin{proposition}
\label{dist neutrices dif}Let $S$ be a solid and let $x,y,z\in A$. If $%
e(z)\leq e(y)$, then $x(y+e(z))=xy+xe(z)$.
\end{proposition}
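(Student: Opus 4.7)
My plan is to reduce both sides of the desired equality $x(y+e(z)) = xy + xe(z)$ to the common value $xy$, by showing that each of the summands $e(z)$ on the left and $xe(z)$ on the right is absorbed by the respective partner.

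First I would prove the left-hand simplification $y+e(z) = y$. Since $e(z)\leq e(y)$ and $e(z)$ is a magnitude, Proposition \ref{e(x)maiore(y)iff} yields $e(y)+e(z) = e(y)$. Using Axiom \ref{assemblyneut} (which gives $y+e(y)=y$) together with associativity, this propagates to
\begin{equation*}
y+e(z) = (y+e(y))+e(z) = y+(e(y)+e(z)) = y+e(y) = y.
\end{equation*}
Hence $x(y+e(z)) = xy$.

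Next I would handle the right-hand side and show $xy+xe(z) = xy$. The idea is to verify that $xe(z)$ lies below the magnitude $e(xy)$, so that it is absorbed by $xy$. From $e(z)\leq e(y)$, Proposition \ref{xe(y)=xe(z)} gives $xe(z)\leq xe(y)$. Axiom \ref{Axiom e(xy)=e(x)y+e(y)x} together with Proposition \ref{e(x)maiore(y)iff} (applied to the magnitudes $xe(y)$ and $ye(x)$ whose sum equals $e(xy)$) shows that $xe(y)\leq e(xy)$, so by transitivity $xe(z)\leq e(xy)$. Since $xe(z)$ is a magnitude (Axiom \ref{Axiom escala}) and therefore positive, Theorem \ref{emaiory e+y=e} gives $e(xy)+xe(z)=e(xy)$. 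Using $xy+e(xy)=xy$ I then obtain
\begin{equation*}
xy+xe(z) = (xy+e(xy))+xe(z) = xy+(e(xy)+xe(z)) = xy+e(xy) = xy.
\end{equation*}

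Combining these two computations yields $x(y+e(z)) = xy = xy+xe(z)$, as required. The only nontrivial step is the comparison $xe(z)\leq e(xy)$ needed on the right; everything else follows directly from the neutral-element axiom and the absorption property of magnitudes under $\leq$ that has been established earlier in the section.
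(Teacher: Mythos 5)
Your proof is correct and follows essentially the same route as the paper's: both sides are reduced to $xy$, the left via $e(y)+e(z)=e(y)$ and the right by absorbing $xe(z)$ through the comparison $xe(z)\leq xe(y)$ inside $e(xy)$. The only cosmetic difference is that you absorb $xe(z)$ into $e(xy)$ as a whole via Theorem \ref{emaiory e+y=e}, whereas the paper expands $e(xy)=e(x)y+xe(y)$ and absorbs $xe(z)$ directly into the summand $xe(y)$.
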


\begin{proof}
Assume that $e(z)\leq e(y)$. Then by Proposition \ref{e(x)maiore(y)iff} one
has $x(y+e(z))=x(y+e(y)+e(z))=xy$ . Now $xe\left( z\right) \leq xe\left(
y\right) $ by Proposition \ref{xe(y)=xe(z)} and $xe\left( y\right) +xe\left(
z\right) =xe\left( y\right) $ by Proposition \ref{e(x)maiore(y)iff}. Then
Axiom \ref{Axiom e(xy)=e(x)y+e(y)x} implies that 
\begin{align*}
xy+xe(z) & =xy+e(xy)+xe(z) \\
& =xy+e(x)y+xe(y)+xe(z) \\
& =xy+e(x)y+xe(y)=xy+e(xy)=xy\text{.}
\end{align*}
Hence $x(y+e(z))=xy+xe(z)$.
\end{proof}

\begin{corollary}
\label{Cor.x(y+e(y))=xy+xe(y)}Let $S$ be a solid and let $x,y\in S$. Then $%
xy=x(y+e(y))=xy+xe(y)$.
\end{corollary}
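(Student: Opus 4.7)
The plan is to derive this corollary as a direct consequence of the preceding Proposition \ref{dist neutrices dif} applied in the special case $z=y$. First I would observe that the hypothesis $e(z)\leq e(y)$ becomes trivially the reflexive inequality $e(y)\leq e(y)$, so the proposition applies without any further work and yields $x(y+e(y)) = xy + xe(y)$.

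Second, I would establish the equality $x(y+e(y)) = xy$. This follows from the defining property of the individualized neutral element (Axiom \ref{assemblyneut} in the Appendix), which tells us that $y + e(y) = y$; multiplying both sides by $x$ gives the desired identity. Chaining these two equalities then gives the full statement $xy = x(y+e(y)) = xy + xe(y)$.

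The main obstacle is essentially nonexistent here, since all the real content is packaged into Proposition \ref{dist neutrices dif}; the only thing to be careful about is that we genuinely have $y+e(y)=y$ in the assembly, so that the middle and left expressions are equal. Thus the proof should be a two-line verification simply invoking the axiom for the individualized neutral element together with the reflexive case of the previous proposition.
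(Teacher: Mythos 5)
Your proposal is correct and matches the paper's intended derivation: the corollary is stated without proof precisely because it is the case $z=y$ of Proposition \ref{dist neutrices dif} (the hypothesis $e(y)\leq e(y)$ holding by reflexivity), combined with $y+e(y)=y$ from Axiom \ref{assemblyneut}. Nothing further is needed.
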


\section{The field of precise elements\label{Section precise elements}}

\begin{definition}
Let $S$ be a solid. An element $x\in S$ is said to be \emph{precise} if $%
e\left( x\right) =m$.
\end{definition}

The elements $m$ and $u$ given by Axioms \ref{Axiom neut min} and \ref{Axiom
neut mult} in solids are unique. We call them zero and one respectively. We
prove that zero and one have the expected properties concerning the order
relation and multiplication. The precise elements are closed under addition,
multiplication and inversion, in fact constitute a field.

The proof of the fact that the elements $m$ and $u$ are unique is identical
as for groups and will be omitted.

\begin{proposition}
\label{unicid zero}Let $S$ be a solid. There is exactly one element $m$ and
exactly one element $u$ such that $x+m=x$ and $xu=x$ for all $x\in S$.
\end{proposition}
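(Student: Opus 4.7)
The plan is to run the classical identity-uniqueness argument from group theory, twice (once additively and once multiplicatively), relying on the commutativity of the two operations in a solid. Existence of at least one such $m$ and $u$ is given directly by Axioms \ref{Axiom neut min} and \ref{Axiom neut mult}, so the content is uniqueness.

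For additive uniqueness, I would suppose that $m'\in S$ is another element satisfying $x+m'=x$ for every $x\in S$, and then evaluate the sum $m+m'$ in two ways. Taking $x=m$ in the defining property of $m'$ yields $m+m'=m$, while taking $x=m'$ in the defining property of $m$ yields $m'+m=m'$. Commutativity of addition in the underlying assembly (built into the axioms, as already used freely throughout Sections~2.1--2.4) then gives
\begin{equation*}
m=m+m'=m'+m=m',
\end{equation*}
so $m$ is unique. For multiplicative uniqueness, I would repeat the same argument with a hypothetical second $u'\in S$ such that $xu'=x$ for all $x\in S$: the product $uu'$ equals $u$ when we take $x=u$ in the property of $u'$ and equals $u'$ when we take $x=u'$ in the property of $u$, and commutativity of multiplication forces $u=u'$.

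There is essentially no obstacle: the argument is purely formal and is precisely the one the author alludes to when saying the proof is ``identical as for groups and will be omitted''. The only mild point worth flagging is that the statement is about a \emph{universal} (non-individualized) neutral, not about the element-specific $e(x)$ or $u(x)$; this is why the proof does not need any of the finer magnitude machinery from Sections~\ref{Algebraic properties of magnitudes} and~\ref{Section precise elements}, only commutativity of $+$ and $\cdot$ together with the two neutrality hypotheses on $m$ (resp.\ $u$) and its putative competitor.
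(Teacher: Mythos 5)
Your proof is correct and is exactly the standard group-theoretic uniqueness argument that the paper itself invokes when it states that the proof ``is identical as for groups and will be omitted''; existence comes from Axioms \ref{Axiom neut min} and \ref{Axiom neut mult}, and uniqueness from evaluating $m+m'$ (resp.\ $uu'$) two ways using commutativity. No gap.
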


\begin{definition}
We call \emph{zero} the unique element $m$ such that $x+m=x$ for all $x\in S$%
, and it will be denoted by $0$. We call \emph{one} the unique element $u$
such that $xu=x$ for all $x\in S$, and it will be denoted by $1.$
\end{definition}

In the following proposition we show some elementary properties of the
elements $0$ and $1$.

\begin{proposition}
\label{prop zero}Let $S$ be a solid and $x,y,z\in S$. Then
\end{proposition}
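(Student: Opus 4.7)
My approach is to exploit the uniqueness established in Proposition \ref{unicid zero} together with the algebraic machinery of Sections 2.1--2.4. The elementary properties one expects to verify for $0$ and $1$ are: (i) $e(0)=0$ and $-0=0$, so that $0$ is a magnitude which equals its own symmetric; (ii) $e(1)=0$ and $1\in S^{\ast}$ with $0<1$, so that $1$ is precise and positive; and (iii) multiplicative absorption/identity statements such as $0\cdot x = 0$ and $1\cdot x = x$ (together with, perhaps, $1^{-1}=1$).

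For (i), the defining equation $0+0=0$ makes $0$ an additive idempotent, so applying Theorem \ref{assembly7} (Representation) with $y$ witnessing $0=e(y)$ gives $0=e(0)$; then Proposition \ref{prop sym}.\ref{assembly6} yields $-0=-e(0)=e(0)=0$. For (ii), note that for any precise $x$ the unity $u(x)$ acts as a universal multiplicative identity, so by Proposition \ref{unicid zero} we may identify $1=u(x)$; Corollary \ref{Cor positive}.\ref{u is positive} then gives $e(1)<1$, and to identify $e(1)$ with $0$ one uses that $e(1)$ is a magnitude smaller than or equal to every other magnitude $e(y)$ (via the scaling arguments of Proposition \ref{xe(y)=xe(z)} and the linearity of $e$), so uniqueness in Proposition \ref{unicid zero} forces $e(1)=0$. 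Transitivity of $\leq$ combined with $1\neq 0$ then delivers $0<1$.

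For (iii), the identity $1\cdot x = x$ is immediate from the definition of $1$. For $0\cdot x$, Axiom \ref{Axiom escala} shows $0\cdot x$ is a magnitude, and by Proposition \ref{e(x)y=e(xy)} we have $e(0\cdot x)=0\cdot x$; to conclude $0\cdot x = 0$ one shows that $0\cdot x$ is smaller than or equal to every magnitude (using Proposition \ref{xe(y)=xe(z)} combined with the fact that $0\leq e(y)$ for every $y$), and invokes antisymmetry together with the universal absorption property of $0$. The main obstacle I anticipate is precisely this last identification: establishing that $0\cdot x$ behaves as a universal additive neutral across \emph{all} of $S$ rather than just on precise elements requires careful use of the distributivity results for magnitudes (Proposition \ref{dist neutrices} and Corollary \ref{Cor.x(y+e(y))=xy+xe(y)}), since one cannot appeal to full distributivity, which is exactly the property the paper is investigating.
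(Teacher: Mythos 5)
Your proposal has several genuine gaps. The most serious is the route to $e(1)=0$: you identify $1$ with $u(x)$ for a precise $x$ and then argue that $e(1)$ is minimal among all magnitudes. Neither step is available at this stage. The unity of Axiom \ref{axiom neut mult} is individualized, so there is no a priori reason that $u(x)$ satisfies $yu(x)=y$ for elements $y\neq x$ (the paper only obtains $u(a)=1$ for precise $a$ as Corollary \ref{u(a)=1}, \emph{after} the field structure of the precise elements is in place); and Proposition \ref{xe(y)=xe(z)} only transports an inequality $e(y)\leq e(z)$ through multiplication, it does not produce the inequality $e(1)\leq e(y)$ you need. The paper's argument is direct and avoids both problems: by Corollary \ref{Cor.x(y+e(y))=xy+xe(y)}, $x=x(1+e(1))=x+xe(1)$ for every $x$, so $xe(1)$ is a universal additive neutral and Proposition \ref{unicid zero} gives $xe(1)=0$; taking $x=1$ and using $1\cdot e(1)=e(1)$ yields $e(1)=0$. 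A similar (smaller) gap occurs for $e(0)=0$: Theorem \ref{assembly7} needs a witness $y$ with $0=e(y)$, which is exactly what is to be proved; the correct one-line argument is $0=0+e(0)=e(0)$, applying Axiom \ref{Axiom neut min} to the element $e(0)$. Your plan for $0\cdot x=0$ (minimality among magnitudes plus antisymmetry) suffers from the same unproved minimality claim; note also that the property listed in the proposition is only $1\cdot 0=0$, which follows at once from $u(1)=1$, $e(1)=0$ and Theorem \ref{equivalences}.\ref{eu=xe(u)}, while the general absorption $x\cdot 0=0$ is deferred to Proposition \ref{x0=0} and proved there by the same universal-neutral trick.

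Beyond these, your write-up simply omits about half of the stated properties: $1\neq 0$ (which requires invoking the existence of a zeroless element, since otherwise $x=x\cdot 1=x\cdot e(0)$ would force every element to be a magnitude by Axiom \ref{Axiom escala}), $u(1)=1$, $0\leq e(x)$, the implication $0\leq x\Rightarrow e(x)\leq x$, and the closure of nonnegative (resp.\ nonpositive) elements under addition. These are short consequences of Axioms \ref{Axiom neut min}, \ref{Axiom e(x)maiory} and \ref{(OA)compoper}, but they are part of the statement and need to be recorded. You correctly anticipate that Corollary \ref{Cor.x(y+e(y))=xy+xe(y)} is the tool that substitutes for full distributivity here; the fix is to make it the engine of the proof of $e(1)=0$ rather than an afterthought for $0\cdot x$.
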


\begin{enumerate}
\item \label{e(0)=0}$e\left( 0\right) =0$.

\item \label{e(1)=0}$e\left( 1\right) =0$.

\item \label{1=note(1)}$1\neq 0$.

\item \label{u(1)=1}$u\left( 1\right) =1$.

\item \label{-0=0}$-0=0$.

\item \label{1.0=0}$1\cdot0=0$.

\item \label{0<1}$0<1$.

\item \label{e(x)maiorzero}$0\leq e\left( x\right) $.

\item \label{xmaior0}If $0\leq x$ then $e\left( x\right) \leq x$.

\item \label{x+ymaior0}If $0\leq x$ and $0\leq y$, then $0\leq x+y$.

\item \label{x+menor0}If $x\leq 0$ and $y\leq 0$, then $x+y\leq 0$.
\end{enumerate}

\begin{proof}
\ref{e(0)=0}. By Axiom \ref{assemblyneut} and \ref{Axiom neut min} one has $%
0=0+e\left( 0\right) =e\left( 0\right) $.

\ref{e(1)=0}. Let $x\in S$. Then $x\left( 1+e\left( 1\right) \right) =x1=x$.
By Corollary \ref{Cor.x(y+e(y))=xy+xe(y)} 
\begin{equation*}
x\left( 1+e\left( 1\right) \right) =x1+xe\left( 1\right) =x+xe\left(
1\right) .
\end{equation*}%
Hence $x=x+xe\left( 1\right) $. Since $x$ is arbitrary, by Proposition \ref%
{unicid zero} one has $xe\left( 1\right) =0$. Putting $x=1$, one obtains $%
1e\left( 1\right) =0$. Since $1e\left( 1\right) =e\left( 1\right) $ by Axiom %
\ref{Axiom neut mult}, one derives that $e\left( 1\right) =0$.

\ref{1=note(1)}. Because $S$ is an solid, it has a zeroless element, say $x$%
. If $1=0$, then $x=x\cdot 1=x\cdot 0=x\cdot e\left( 0\right) $, so by Axiom %
\ref{Axiom escala} $x$ is a magnitude, a contradiction. Hence $1\neq 0$.

\ref{u(1)=1}. By Axiom \ref{axiom neut mult} and \ref{Axiom neut mult} one
has $1=1\cdot u\left( 1\right) =u\left( 1\right) $.

\ref{-0=0}. Using Part \ref{e(0)=0} and Proposition \ref{prop sym}.\ref%
{assembly6} one has $-0=-e\left( 0\right) =e\left( 0\right) =0$.

\ref{1.0=0}. By Part \ref{u(1)=1}, Part \ref{e(1)=0} and Theorem \ref%
{equivalences}.\ref{eu=xe(u)}.

\ref{0<1}. Corollary \ref{Cor positive}.\ref{u is positive}, Part \ref%
{e(1)=0} and Part \ref{u(1)=1} imply that $0=e(1)=e(u(1))<u(1)=1$.

\ref{e(x)maiorzero}. Directly from Axiom \ref{Axiom neut min} and Axiom \ref%
{Axiom e(x)maiory}.

\ref{xmaior0}. Suppose that $0\leq x$. Then $e\left( x\right) =0+e\left(
x\right) \leq x+e\left( x\right) =x$ by compatibility with addition.

\ref{x+ymaior0}. Directly from Axiom \ref{(OA)compoper}.

\ref{x+menor0}. Directly from Axiom \ref{(OA)compoper}.
\end{proof}

We prove that zero is the absorbing element for multiplication.

\begin{proposition}
\label{x0=0}Let $S$ be a solid. Then $x0=0$, for all $x\in S$.
\end{proposition}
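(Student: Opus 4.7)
The plan is to combine Corollary \ref{Cor.x(y+e(y))=xy+xe(y)} with the fact $e(1)=0$ just established in Proposition \ref{prop zero}.\ref{e(1)=0}, and then invoke the uniqueness of the additive neutral from Proposition \ref{unicid zero}.

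First I would apply Corollary \ref{Cor.x(y+e(y))=xy+xe(y)} to $x$ and $1$: for every $x\in S$,
\[
x=x\cdot 1=x\bigl(1+e(1)\bigr)=x\cdot 1+x\cdot e(1)=x+x\cdot 0,
\]
where the last equality uses $e(1)=0$. Note that by Axiom \ref{Axiom escala} the element $x\cdot 0=x\cdot e(1)$ is itself a magnitude, so it is of the right ``type'' to be a candidate for the additive neutral.

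Then I would invoke the uniqueness clause of Proposition \ref{unicid zero}: since $x+x\cdot 0=x$ holds for every $x\in S$, the element $x\cdot 0$ acts as a global additive neutral and must therefore coincide with $0$.

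The delicate point, already implicit in the proof of Proposition \ref{prop zero}.\ref{e(1)=0}, is the passage from ``$x+x\cdot 0=x$ for every $x$'' to ``$x\cdot 0=0$'', since a priori the magnitude $x\cdot 0$ could vary with the factor $x$. The argument leans on exactly the same uniqueness reading of Proposition \ref{unicid zero} used there: the displayed identity tells us that, no matter which $x$ is fixed, the magnitude $x\cdot 0$ satisfies the defining property of the unique global zero, and hence equals $0$.
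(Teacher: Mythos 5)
Your proof is essentially identical to the paper's: it applies Corollary \ref{Cor.x(y+e(y))=xy+xe(y)} with $y=1$ together with $e(1)=0$ from Proposition \ref{prop zero}.\ref{e(1)=0} to obtain $x=x+x0$ for all $x\in S$, and then concludes $x0=0$ by the uniqueness clause of Proposition \ref{unicid zero}. The ``delicate point'' you flag (that $x\cdot 0$ might a priori depend on $x$) is treated in exactly the same way in the paper's own proof, so no change of approach is needed.
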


\begin{proof}
By Proposition \ref{prop zero}.\ref{e(1)=0} and Corollary \ref%
{Cor.x(y+e(y))=xy+xe(y)} one has 
\begin{equation*}
x\left( 1+0\right) =x1+x0
\end{equation*}%
for all $x\in S$. Hence $x=x+x0$ for all $x\in S$. Then $x0=0$ for all $x\in
S$ by Proposition \ref{unicid zero}.
\end{proof}

By Proposition \ref{prop zero}.\ref{e(0)=0} and \ref{prop zero}.\ref{e(1)=0}
the elements $0$ and $1$ are examples of precise elements. We show that
precise elements are closed under elementary operations.

\begin{proposition}
\label{precise closed}Let $S$ be a solid and let $a,b\in S$ be precise. Then 
$a+b,-b$ and $ab$ are precise. If $a\neq 0$ then $u\left( a\right) $ and $%
a^{-1}$ are also precise.
\end{proposition}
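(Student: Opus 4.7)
The plan is to verify each of the five closure claims separately by directly computing the associated magnitude and showing it equals $0$, using the results established in Sections \ref{Algebraic properties of magnitudes} and in the present section. Since being precise is just the condition $e(x) = 0$, every case reduces to applying a known formula for $e$ of the relevant expression and then using $e(a) = e(b) = 0$ together with Proposition \ref{x0=0}.

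First I would dispatch the three easiest cases. For $a+b$, linearity of $e$ (Proposition \ref{theoremassembly}.\ref{assembly2}) gives $e(a+b) = e(a) + e(b) = 0 + 0 = 0$. For $-b$, Proposition \ref{prop sym}.\ref{assembly6} gives $e(-b) = e(b) = 0$. For $ab$, Axiom \ref{Axiom e(xy)=e(x)y+e(y)x} gives $e(ab) = ae(b) + be(a) = a\cdot 0 + b\cdot 0$, and the right-hand side collapses to $0$ by Proposition \ref{x0=0}.

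Now assume $a \neq 0$, so that $a$ is zeroless (being precise with $e(a)=0 \neq a$) and Lemma \ref{prop dif} applies. For $u(a)$ I would invoke Axiom \ref{e(u(x))=e(x)d(x)}, which gives $e(u(a)) = e(a) a^{-1} = 0 \cdot a^{-1}$; by Proposition \ref{e(x)y=e(xy)} (applied with $x = 0$) this product is a magnitude, and combining with Proposition \ref{x0=0} (together with Proposition \ref{unicid zero} characterising $0$) it must equal $0$. For $a^{-1}$ the cleanest route is Theorem \ref{equivalences}.\ref{e(d(x))=edd}: $e(a^{-1}) = e(a) a^{-1} a^{-1} = 0 \cdot a^{-1} \cdot a^{-1} = 0$ by the same argument.

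I do not anticipate a real obstacle; the work is already done in the preceding subsections, and this proposition is essentially a collation. The only mildly delicate point is the handling of products of the form $0 \cdot y$, where one must be careful to invoke Proposition \ref{x0=0} (or Axiom \ref{Axiom escala} plus Theorem \ref{assembly7} and uniqueness of $0$) rather than silently assume $0$ is two-sided absorbing.
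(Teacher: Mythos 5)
Your proposal is correct and follows essentially the same route as the paper's own proof: each case is handled by computing the relevant magnitude via linearity of $e$, Proposition \ref{prop sym}.\ref{assembly6}, Axiom \ref{Axiom e(xy)=e(x)y+e(y)x}, Axiom \ref{e(u(x))=e(x)d(x)} and Theorem \ref{equivalences}.\ref{e(d(x))=edd}, and then collapsing the result to $0$ with Proposition \ref{x0=0}. Your extra care about $0\cdot y$ being absorbing is sound but already covered by Proposition \ref{x0=0} together with commutativity of multiplication.
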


\begin{proof}
Since $a$ and $b$ are precise, one has $e\left( a\right) =e\left( b\right)
=0 $. Then $e\left( -b\right) =e\left( b\right) =0$ and $e\left( a+b\right)
=e\left( a\right) +e\left( b\right) =0$. Hence $-b$ and $a+b$ are precise.
Also, by Axiom \ref{Axiom e(xy)=e(x)y+e(y)x} and Proposition \ref{x0=0}%
\begin{equation*}
e\left( ab\right) =e\left( a\right) b+ae\left( b\right) =0.
\end{equation*}%
Hence $ab$ is precise.

Suppose now that $a\neq 0$. Then by Axiom \ref{e(u(x))=e(x)d(x)}\ and
Proposition \ref{x0=0}%
\begin{equation*}
e\left( u\left( a\right) \right) =e\left( a\right) a^{-1}=0a^{-1}=0\text{.}
\end{equation*}%
Hence $u\left( a\right) $ is precise. Finally, by Theorem \ref{equivalences}.%
\ref{e(d(x))=edd} and Proposition \ref{x0=0}%
\begin{equation*}
e\left( a^{-1}\right) =e\left( a\right) a^{-1}a^{-1}=0a^{-1}a^{-1}=0\text{.}
\end{equation*}%
Hence $a^{-1}$ is also precise.
\end{proof}

\begin{proposition}
\label{Prop dist a(x+y)}Let $S$ be a solid, let $x,y\in S$ and let $a\in S$
be precise. Then $a\left( x+y\right) =ax+ay$.
\end{proposition}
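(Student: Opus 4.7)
The plan is to apply Axiom \ref{Axiom distributivity} -- the modified distributivity axiom with magnitude correction -- to expand $a(x+y)$, and then to show that every summand in the correction term vanishes or is absorbed once we use the hypothesis $e(a)=0$. So the first step is to write $a(x+y)=ax+ay+M$, where $M$ is the correction magnitude prescribed by the axiom. Based on the structure described in the introduction, each summand of $M$ should be a product of $a,x,y$ with at least one magnitude factor drawn from $e(a)$, $e(x)$, $e(y)$.

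Next, I would split the summands of $M$ into two types. Summands containing $e(a)$ as a factor are killed outright: since $a$ is precise, $e(a)=0$, so by Proposition \ref{x0=0} any product of the form $e(a)\cdot w = 0\cdot w$ reduces to $0$. The remaining summands of $M$ are built from $ae(x)$ and $ae(y)$ (possibly multiplied by further factors that are magnitudes); by Axiom \ref{Axiom escala} each of these is itself a magnitude.

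I would then absorb the remaining magnitude terms into $ax$ and $ay$. Since $a$ is precise, Axiom \ref{Axiom e(xy)=e(x)y+e(y)x} gives
\begin{equation*}
e(ax)=e(a)x+ae(x)=0+ae(x)=ae(x),
\end{equation*}
and similarly $e(ay)=ae(y)$. Consequently $ax+ae(x)=ax+e(ax)=ax$ and $ay+ae(y)=ay$ by Axiom \ref{assemblyneut}. Larger products of the form $ae(x)e(y)$ are dominated by $ae(x)$ via Proposition \ref{xe(y)=xe(z)} and Proposition \ref{e(x)maiore(y)iff}, so they too are absorbed. After these reductions $M$ collapses into $e(ax)+e(ay)$, which adds nothing to $ax+ay$, giving $a(x+y)=ax+ay$.

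The main obstacle is bookkeeping: confirming that every summand of the correction magnitude in Axiom \ref{Axiom distributivity} is of one of the above types so that all of them are either annihilated by $e(a)=0$ or absorbed into $e(ax)$ or $e(ay)$. Once the explicit form of the axiom is read off, each reduction is a direct application of Proposition \ref{x0=0}, Axiom \ref{Axiom e(xy)=e(x)y+e(y)x}, and absorption of magnitudes, and no genuine calculation is required beyond these steps.
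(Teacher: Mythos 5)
Your proposal is correct and is essentially the paper's proof: Axiom \ref{Axiom distributivity} gives $ax+ay=a\left(x+y\right)+e\left(a\right)x+e\left(a\right)y$, and since $e\left(a\right)=0$ both correction summands vanish by Proposition \ref{x0=0}. The second half of your argument (absorbing $ae\left(x\right)$, $ae\left(y\right)$ and $ae\left(x\right)e\left(y\right)$ into $e\left(ax\right)$ and $e\left(ay\right)$) is unnecessary, because the axiom's correction term is exactly $e\left(a\right)x+e\left(a\right)y$ and contains no such summands.
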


\begin{proof}
From Axiom \ref{Axiom distributivity} and Proposition \ref{x0=0} we derive 
\begin{align*}
ax+ay& =a\left( x+y\right) +e(a)x+e(a)y \\
& =a\left( x+y\right) +0x+0y= \\
& =a\left( x+y\right) .
\end{align*}
\end{proof}

\begin{theorem}
The set of precise elements $P$ of a solid $S$ is an ordered field.
\end{theorem}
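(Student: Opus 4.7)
The plan is to verify on $P$ the four ingredients of an ordered field: a commutative group under addition, a commutative group under multiplication on $P\setminus\{0\}$, distributivity, and a total order compatible with both operations.

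Closure and arithmetic structure. By Proposition \ref{prop zero}, $0,1\in P$ and $0\neq 1$; by Proposition \ref{precise closed}, $P$ is closed under $+$, $-$, $\cdot$, and multiplicative inversion of nonzero elements. Associativity and commutativity of $+$ and $\cdot$ carry over from $S$. The identities $a+0=a$ and $a\cdot 1=a$ for all $a\in S$ (Proposition \ref{unicid zero}) hold in particular on $P$. For precise $a$, the element $-a\in P$ is a genuine additive inverse since $a+(-a)=e(a)=0$. The distributive law $a(x+y)=ax+ay$ for $a,x,y\in P$ is a direct consequence of Proposition \ref{Prop dist a(x+y)}.

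Multiplicative inverses. The nontrivial step is that for precise $a\neq 0$ the individualized unity $u(a)$ coincides with the global $1$, so that $a\cdot a^{-1}=u(a)=1$ really is a field-theoretic inverse. Setting $w:=1-u(a)\in P$ and using distributivity for the precise factor $a$ (Proposition \ref{Prop dist a(x+y)}), I compute
\begin{equation*}
aw = a\cdot 1 - a\cdot u(a) = a - a = e(a) = 0.
\end{equation*}
Since $a$ is precise and nonzero, $a\neq e(a)$, so Theorem \ref{zerodiv} forces $w=e(w)$; preciseness of $w$ then gives $e(w)=0$, and hence $w=0$, i.e.\ $u(a)=1$, as required.

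Order. Reflexivity, antisymmetry, transitivity, and compatibility with addition transfer immediately from the ordered-assembly axioms to $P$. For compatibility of multiplication with the positive cone, if $x,y\in P$ satisfy $0\leq x$ and $0\leq y$, then $e(x)=0\leq x$ and $e(y)=0\leq y$ make $x,y$ positive in the sense of the paper, so Theorem \ref{mult positivos} yields $e(xy)\leq xy$; since $xy\in P$, this reads $0=e(xy)\leq xy$. Totality of the induced order on $P$ follows from the order axioms of $S$ applied to precise elements (for which $e(x)=0$). The main obstacle I anticipate is the identification $u(a)=1$ for precise $a\neq 0$: in a general solid the individualized unity need not equal the global one, and it is precisely the combination of the zero-divisor result (Theorem \ref{zerodiv}) with the distributivity available on the precise part (Proposition \ref{Prop dist a(x+y)}) that forces the identification.
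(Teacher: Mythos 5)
Your proof is correct and follows the same overall plan as the paper's (verify the ordered-field axioms on $P$ from Propositions \ref{precise closed}, \ref{prop zero} and \ref{Prop dist a(x+y)}), but it adds one genuinely new and worthwhile ingredient. The paper's proof dismisses the group structure of $\left( P\setminus \{0\},\cdot \right)$ with ``clearly'' and only afterwards records $u(a)=1$ as Corollary \ref{u(a)=1}, presumably deduced from the field structure itself; yet exhibiting a multiplicative inverse of a nonzero precise $a$ with respect to the global unity $1$ already requires knowing that $aa^{-1}=u(a)$ equals $1$, so the paper's ordering of theorem and corollary hides a small circularity. Your argument resolves it cleanly: setting $w=1-u(a)$, using the distributivity available for the precise factor $a$ (Proposition \ref{Prop dist a(x+y)}) together with Axiom \ref{Axiom s(xy)=s(x)y} to get $aw=a-a=e(a)=0=e(aw)$, and then invoking the no-zero-divisors statement (Theorem \ref{zerodiv}) to force $w=e(w)=0$. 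This is exactly the missing verification, and it uses only results proved before the theorem, so it could be inserted as a lemma preceding it. The remaining points (additive inverses via $a+(-a)=e(a)=0$, closure of the positive cone via Theorem \ref{mult positivos}, transfer of the order axioms) match what the paper takes for granted and are handled correctly.
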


\begin{proof}
By Proposition \ref{precise closed}, precise elements are closed under
addition and multiplication. By Proposition \ref{prop zero}.\ref{1=note(1)}
the precise element $0$ is different from the precise element $1$. Clearly $%
\left( P,+\right) $ and $\left( P\backslash \left \{ 0\right \} ,\cdot
\right) $ are ordered abelian groups. The distributive property holds by
Proposition \ref{Prop dist a(x+y)}. Hence $P$ is an ordered field.
\end{proof}

\begin{corollary}
\label{u(a)=1}Let $S$ be a solid and let $a\in S$ be a non-zero precise
element. Then $u\left( a\right) =1$.
\end{corollary}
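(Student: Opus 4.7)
The plan is to exploit the fact, just established, that the set $P$ of precise elements is an ordered field, and that $u(a)$ lies inside this field when $a$ is a non-zero precise element.

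First I would check that $u(a)$ is a precise non-zero element. Since $a$ is precise, $e(a)=0$, and since $a\neq 0$ we have $a\neq e(a)$, so $a\in S^{\ast}$. By Proposition \ref{precise closed}, $u(a)$ is precise, i.e.\ $u(a)\in P$. By Lemma \ref{prop dif}.\ref{udife(u)} one has $u(a)\neq e(u(a))=0$, so $u(a)\in P\setminus\{0\}$.

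Next, I would derive the equation $a\cdot u(a)=a\cdot 1$. The left-hand side equals $a$ by Axiom \ref{axiom neut mult}, and the right-hand side equals $a$ by the defining property of $1$ (Proposition \ref{unicid zero}). Since $a,u(a),1\in P$ and $a\neq 0$, I can multiply both sides on the right by $a^{-1}\in P$ (which exists and is precise by Proposition \ref{precise closed}), using associativity of multiplication in the field $P$. This yields $u(a)\cdot u(a)=1\cdot u(a)=u(a)$ in $P$, and then a second multiplication by $u(a)^{-1}\in P$ gives $u(a)=1$.

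I do not expect any real obstacle here: the whole argument is just the standard ``an idempotent in a field is $0$ or $1$'' observation, transported from $P$ back to $S$. The one point to be careful about is citing the correct facts to keep the cancellation \emph{inside} $P$ (rather than attempting cancellation in $S$, where it is not available in general); this is ensured by Proposition \ref{precise closed} and by the theorem that $P$ is an ordered field.
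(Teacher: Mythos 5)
Your proof is correct and follows the route the paper clearly intends: the corollary is stated without proof immediately after the theorem that the precise elements form an ordered field, and the expected argument is exactly yours --- note that $u(a)$ is a non-zero element of $P$, observe $a\cdot u(a)=a=a\cdot 1$, and cancel $a\neq 0$ inside the field $P$. Your detour through the idempotency $u(a)\cdot u(a)=u(a)$ is harmless but unnecessary; cancelling $a$ once already gives $u(a)=1$.
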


\section{Characterization of distributivity}

In solids addition is connected with multiplication via an adapted
distributivity condition. Distributivity holds up to a magnitude. Indeed, if 
$x,y$ and $z$ are elements of a solid, in general to obtain equality with $%
xy+xz$ one has to add to $x(y+z)$ a magnitude depending only on $y,z$ and
the magnitude of $x$. So we have subdistributivity with a concrete
expression of the correction term.

Theorem \ref{dist chacterization} gives necessary and sufficient conditions
for the usual distributive law to hold, i.e. under such conditions the
correction term is less than the magnitudes involved. It appears that the
only case where distributivity does not hold is the joint presence of two
circumstances: the factor $x$ should be more imprecise than the terms $y$
and $z$, and these terms should be almost opposite. That approximations may
not work out well in relation to differences of almost equal terms is
well-known, and may for instance be compared with the fact that if a
sequence of functions $f$ converges, the corresponding sequence of
derivatives (limits involving differences with almost equal terms of the
form $f(x+h)-f(x)$) does not need to converge.

\subsection{Conditions for distributivity\label{Subsection conditions
distributivity}}

We start by formulating necessary and sufficient conditions for
distributivity to hold. This requires a notion of relative size of magnitude
which corresponds to the notion given in \cite[p. 151]{koudjetivandenberg}
(see also \cite[Definition 3.2.11]{koudjetithese}).

\begin{definition}
Let $S$ be a solid and $x\in S$. The \emph{relative uncertainty of }$x$,
noted $R\left( x\right) $ is defined as follows. If $x\neq e\left( x\right) $%
, then $R(x)=e\left( u\left( x\right) \right) $.\ If $x=e\left( x\right) $,
then $R\left( x\right) =M$, where $M$ is given by Axiom \ref{Axiom neut max}.
\end{definition}

Observe that for zeroless $x$ one has $R(x)=e(x)x^{-1}$, so $R(x)$ expresses
the relative uncertainty indeed. Also $x=x(u(x)+R(x))$, for $%
x=xu(x)=x(u(x)+e(u(x))=x(u(x)+R(x))$.

For precise non-zero numbers the relative uncertainty is equal to $0$. In
case $x=e(x)$ is a magnitude the formula $R(x)=e(x)x^{-1}$ would amount to $%
R(x)=e(x)(e(x))^{-1}$, a division by a generalized zero. So it is natural to
define the relative uncertainty of a magnitude to be the largest magnitude.

\begin{theorem}
\emph{(Distributivity criterion)}\label{dist chacterization} Let $S$ be a
solid and let $x,y,z\in S$. Then 
\begin{equation}
xy+xz=x\left( y+z\right) \Leftrightarrow e\left( x\right) \left( y+z\right)
=e\left( x\right) y+e\left( x\right) z\vee R\left( x\right) \leq R\left(
y\right) +R\left( z\right) .  \label{dist charac}
\end{equation}
\end{theorem}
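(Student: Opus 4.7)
The plan is to reduce the equality $xy + xz = x(y+z)$ to an inequality between magnitudes, and then to split this inequality according to the structure of magnitudes as a totally ordered idempotent semigroup under addition. From Axiom \ref{Axiom distributivity}---as used in the proof of Proposition \ref{Prop dist a(x+y)}---one has $xy + xz = x(y+z) + e(x)y + e(x)z$. Hence $xy + xz = x(y+z)$ if and only if the positive magnitude $e(x)y + e(x)z$ is absorbed by $x(y+z)$, which by the Cancellation law (Theorem \ref{Cancelation law}) together with Theorem \ref{emaiory e+y=e} is equivalent to
\[
e(x)y + e(x)z \;\leq\; e(x(y+z)).
\]
Expanding the right-hand side using Axiom \ref{Axiom e(xy)=e(x)y+e(y)x}, the linearity of $e$ (Proposition \ref{theoremassembly}.\ref{assembly2}), and Proposition \ref{dist neutrices} yields
\[
e(x(y+z)) \;=\; e(x)(y+z) + xe(y) + xe(z).
\]

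Since the sum of two magnitudes equals the larger of them (Propositions \ref{theoremassembly}.\ref{assembly0} and \ref{e(x)maiore(y)iff}), an inequality $A \leq B + C$ between magnitudes is equivalent to $A \leq B$ or $A \leq C$. Grouping the last two summands, this splits our criterion into the alternatives
\[
e(x)y + e(x)z \leq e(x)(y+z) \quad \text{or} \quad e(x)y + e(x)z \leq xe(y) + xe(z).
\]
The first alternative already yields the first disjunct of the theorem: applying Axiom \ref{Axiom distributivity} with $x$ replaced by the magnitude $e(x)$ and using $e(e(x)) = e(x)$ (Proposition \ref{theoremassembly}.\ref{assembly3}) gives the reverse inequality $e(x)(y+z) \leq e(x)y + e(x)z$ unconditionally, so the first alternative is equivalent to the equality $e(x)(y+z) = e(x)y + e(x)z$.

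The main obstacle is to show that the second alternative, $e(x)y + e(x)z \leq xe(y) + xe(z)$, is equivalent to the relative uncertainty condition $R(x) \leq R(y) + R(z)$. I would argue by cases on whether $x, y, z$ are zeroless. When $x$ is a magnitude, or either $y$ or $z$ is a magnitude, both sides of the sought equivalence either collapse to the first disjunct or follow directly from Proposition \ref{dist neutrices dif} together with the convention $R(m) = M$ for magnitudes $m$. In the principal case where $x, y, z$ are all zeroless, the key identity $e(x) = R(x)\cdot x$---obtained from Theorem \ref{equivalences}.\ref{eu=xe(u)} by writing $R(x) = e(u(x)) = e(x) x^{-1}$ and multiplying by $x$---rewrites the magnitude inequality entirely in terms of the four magnitudes $R(x)xy$, $R(x)xz$, $R(y)xy$ and $R(z)xz$. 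Using that $xy$ and $xz$ are zeroless (Theorem \ref{zerodiv}), together with the monotonicity of multiplication by magnitudes (Proposition \ref{xe(y)=xe(z)}), the rules of signs from Section \ref{subsection preservation properties}, and cancellation by multiplicative inverses (Proposition \ref{1/x is positive}), one then establishes the equivalence with $R(x) \leq \max(R(y), R(z)) = R(y) + R(z)$ by carefully matching the two maxima on each side of the magnitude inequality.
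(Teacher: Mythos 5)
Your reduction of $xy+xz=x(y+z)$ to the magnitude inequality $e(x)y+e(x)z\leq e(x(y+z))=e(x)(y+z)+xe(y)+xe(z)$, and the subsequent split by absorption into two alternatives, is sound and is essentially how the paper's sufficiency argument begins; the identification of the first alternative with the first disjunct is also correct. The gap is in the final step: the second alternative $e(x)y+e(x)z\leq xe(y)+xe(z)$ is \emph{not} equivalent to $R(x)\leq R(y)+R(z)$, so the equivalence you intend to establish by ``matching the two maxima'' is false and no amount of care will produce it. The direction that fails is $R(x)\leq R(y)+R(z)\Rightarrow$ second alternative. A counterexample in the external numbers: take $x=y=1+\oslash$ and $z=\varepsilon^{-1}+\varepsilon\pounds$ with $\varepsilon$ a positive infinitesimal. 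Then $R(x)=\oslash=R(y)$, so $R(x)\leq R(y)+R(z)$, but $e(x)y+e(x)z=\oslash+\oslash\varepsilon^{-1}=\oslash\varepsilon^{-1}$ while $xe(y)+xe(z)=\oslash+\varepsilon\pounds=\oslash$, and $\oslash\varepsilon^{-1}\not\leq\oslash$. (The theorem is not contradicted, because here the first disjunct holds.) In your rewriting $R(x)xy+R(x)xz\leq R(y)xy+R(z)xz$, knowing $R(x)\leq R(y)$ controls $R(x)xy$ against $R(y)xy$ but gives no control of $R(x)xz$ against $R(y)xy$ when $\left\vert z\right\vert$ is much larger than $\left\vert y\right\vert$.

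What is missing is the paper's Lemma \ref{lemma e(x)y=e(x)z}: whenever the first disjunct fails, necessarily $e(x)y=e(x)z$, i.e.\ $y$ and $z$ are of the same order relative to $e(x)$ (the proof shows they must be ``almost opposite'', via the successive estimates $y<e(y)$, $e(z)<-y$, $y+y<-z$). Since only the two \emph{disjunctions} need to be equivalent, this extra information is available exactly when it is needed, and with it the cross-multiplication $R(x)\leq R(y)\Leftrightarrow e(x)y\leq e(y)x$ closes both directions. Note also that the backward half of this cross-multiplication (Lemma \ref{if...then R(x)<=R(y)}) is itself nontrivial: recovering $R(x)\leq R(y)$ from an inequality of the form $e(x)y\leq e(y)x$ requires absorbing unities into magnitudes, i.e.\ Theorem \ref{e(x)u(y)=e(x)}, which rests on the expansion $u(x)=1+e(u(x))$ of Theorem \ref{Proposition expansion}; the sign results you cite (Proposition \ref{1/x is positive}) do not supply this.
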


By the criterion above, distributivity holds for $x$ if it is true for its
magnitude; the only case where it is maybe not true is where $y$ and $z$ are
almost opposite, i.e. $y+z$ is much smaller than both $y$ and $z$. This is
easily seen in the most extreme case. If $x$ is not precise and $y$ and $z$
are precise non-zero with $y=-z,$ we have $e\left( x\right) \left(
y+z\right) =0$ and $e\left( x\right) y+e\left( x\right) z=e\left( x\right)
y-e\left( x\right) y=e\left( x\right) y\neq 0$. If $y$ and $z$ are not
precise, distributivity may hold, provided that the relative uncertainty of $%
x$ is less than or equal to the maximum of the relative uncertainties of $y$
and $z$, one might say "sharpness cuts"; note that we have already proved in
Proposition \ref{Prop dist a(x+y)} that distributivity always holds if $x$
is precise and in Corollary \ref{Cor dist magnitudes}\ that $e\left(
x\right) \left( e\left( y\right) +e\left( z\right) \right) =e\left( x\right)
e\left( y\right) +e\left( x\right) e\left( z\right) $. So the criterion
formalizes what was said above: distributivity does not hold in the joint
presence of two circumstances: the factor $x$ should be more imprecise than
the terms $y$ and $z$, and these terms should be almost opposite.

The distributivity axiom implies that subdistributivity takes the form of an
inequality. This is in line with subdistributivity for interval calculus
holding with inclusion.

\begin{theorem}
\emph{(Subdistributivity)}\label{subdist} Let $S$ be a solid and let $%
x,y,z\in S$. Then $x\left( y+z\right) \leq xy+xz$.
\end{theorem}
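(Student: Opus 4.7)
The plan is to apply Axiom \ref{Axiom distributivity} directly. In the form used in the proof of Proposition \ref{Prop dist a(x+y)}, the axiom gives the identity
\[
xy+xz = x(y+z) + e(x)y + e(x)z.
\]
By Axiom \ref{Axiom escala}, each of $e(x)y$ and $e(x)z$ is a magnitude, hence so is their sum $N:=e(x)y+e(x)z$. The theorem therefore reduces to the general fact that, for every $a\in S$ and every magnitude $N$, one has $a\le a+N$.

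To prove this general fact, the key observation is that magnitudes are totally ordered: by Proposition \ref{theoremassembly}.\ref{assembly0} combined with Proposition \ref{e(x)maiore(y)iff}, either $e(a)\le N$ or $N\le e(a)$. In the first case, compatibility with addition applied to $e(a)\le N$ yields $a = a+e(a)\le a+N$. In the second case, Proposition \ref{e(x)maiore(y)iff} gives $e(a)+N=e(a)$, whence $a+N = a+e(a)+N = a+e(a) = a$ and the inequality is trivial by reflexivity.

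Applying the general fact with $a:=x(y+z)$ and $N:=e(x)y+e(x)z$ produces
\[
x(y+z)\le x(y+z)+e(x)y+e(x)z = xy+xz,
\]
which is the desired conclusion.

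The main (mild) obstacle is isolating the right auxiliary statement. A first impulse is to try to \emph{cancel} $x(y+z)$ from the two sides of the distributivity identity and reduce the claim to the magnitude inequality $e(x(y+z))\le e(x)y+e(x)z$; but cancellation in a solid is governed by Theorem \ref{Cancelation law} and does not legitimate this move, and in fact the alleged magnitude inequality can fail (only the weaker $e(x(y+z))\le e(xy+xz)$ is automatic). Recasting the step as ``adding a magnitude never decreases an element'' sidesteps the issue and reduces the theorem to the clean case split above.
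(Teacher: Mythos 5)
Your proof is correct and follows essentially the same route as the paper: both rewrite $xy+xz$ via Axiom \ref{Axiom distributivity} and then observe that adding the correction term cannot decrease $x(y+z)$. The paper gets the auxiliary inequality more directly from Proposition \ref{prop zero}.\ref{e(x)maiorzero} (every magnitude is $\geq 0$) together with compatibility with addition, so your case split on $e(a)\leq N$ versus $N\leq e(a)$, while valid, is not needed.
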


\begin{proof}
Using Proposition \ref{prop zero}.\ref{e(x)maiorzero} one has%
\begin{equation*}
x\left( y+z\right) \leq x\left( y+z\right) +e\left( x\right) y+e\left(
x\right) z=xy+xz.
\end{equation*}
\end{proof}

We already saw that for distributivity not to hold the two elements $y$ and $%
z$ of the second member should be in some sense opposite. This can never be
if $y$ or $z$ are of the same sign. We show that distributivity effectively
holds in this case.

\begin{corollary}
\label{dist positivos}Let $S$ be a solid and let $x,y,z\in S$. If $y$ and $z$
are of the same sign, then $x\left( y+z\right) =xy+xz$.
\end{corollary}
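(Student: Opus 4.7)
The plan is to invoke the distributivity criterion (Theorem \ref{dist chacterization}) by establishing its first disjunct $e(x)(y+z) = e(x)y + e(x)z$ whenever $y$ and $z$ share a sign. Since multiplication by the magnitude $e(x)$ is sign-insensitive (Proposition \ref{es=ex}), it suffices to treat the case where $y$ and $z$ are both positive; the case of both negative reduces to this after replacing $y$ and $z$ by $-y$ and $-z$.

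Under this reduction, Proposition \ref{order sum}.\ref{xmaiore(x)7} gives that $y+z$ is positive, and Proposition \ref{prop zero}.\ref{e(x)maiorzero} combined with positivity yields $0 \leq y$ and $0 \leq z$, so that compatibility of addition with order delivers $y \leq y+z$ and $z \leq y+z$. Subdistributivity (Theorem \ref{subdist}) applied with $e(x)$ in place of $x$ gives one half of the equality: $e(x)(y+z) \leq e(x)y + e(x)z$. For the reverse direction, I would use compatibility of multiplication by the nonnegative magnitude $e(x)$ with order (Axiom \ref{Axiom Amplification}) to transfer the inequalities $y \leq y+z$ and $z \leq y+z$ into $e(x)y \leq e(x)(y+z)$ and $e(x)z \leq e(x)(y+z)$. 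Since $e(x)(y+z)$ is itself a magnitude (Axiom \ref{Axiom escala}) and therefore idempotent under addition (Proposition \ref{theoremassembly}.\ref{assembly1}), summing yields $e(x)y + e(x)z \leq e(x)(y+z) + e(x)(y+z) = e(x)(y+z)$. Combining both halves gives $e(x)(y+z) = e(x)y + e(x)z$, and Theorem \ref{dist chacterization} then delivers $x(y+z) = xy + xz$.

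The delicate step is the order transfer across multiplication by $e(x)$: compatibility with multiplication (Axiom \ref{compat mult}) is cited in the preceding section only for zeroless positive multipliers, and the magnitude case rests on Axiom \ref{Axiom Amplification}, which earlier in the paper is invoked only with magnitude-valued multiplicands (as in Proposition \ref{xe(y)=xe(z)}). If a direct appeal is not admissible, one can split into the case $e(x) = 0$, which reduces to $0 = 0$ by Proposition \ref{x0=0}, and the case $e(x) \neq 0$, in which one can verify $e(x)y \leq e(x)(y+z)$ by exploiting the identity $e(y+z) = e(y)$ (after reducing by symmetry to $e(z) \leq e(y)$) together with Proposition \ref{dist neutrices dif} and Corollary \ref{Cor.x(y+e(y))=xy+xe(y)}.
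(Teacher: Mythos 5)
Your proof is correct and follows essentially the same route as the paper: both verify the first disjunct of Theorem \ref{dist chacterization}, obtain one inequality from Theorem \ref{subdist} and the reverse from Axiom \ref{Axiom Amplification} together with absorption/idempotency of the magnitudes $e(x)y$, $e(x)z$, $e(x)(y+z)$, and both reduce the negative case to the positive one. Your closing worry about applying Axiom \ref{Axiom Amplification} is unnecessary: the axiom is stated for arbitrary $y\leq z$ with $e(y)\leq y$, with no requirement that the multiplicands be magnitudes, and the paper itself applies it in exactly this way (e.g.\ $e(x)z\leq e(x)(z+e(y))\leq e(x)(y+z)$) in its own proof of this corollary.
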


\begin{proof}
By Theorem \ref{dist chacterization} we only need to show that $e\left(
x\right) \left( y+z\right) =e\left( x\right) y+e\left( x\right) z$. Suppose
firstly that $y$ and $z$ are both positive. By Theorem \ref{subdist} one has 
$e\left( x\right) \left( y+z\right) \leq e\left( x\right) y+e\left( x\right)
z$. To show that also $e\left( x\right) y+e\left( x\right) z\leq e\left(
x\right) \left( y+z\right) $, we assume without loss of generality that $%
y\leq z$. Then by Axiom \ref{Axiom Amplification}%
\begin{equation*}
e\left( x\right) y+e\left( x\right) z=e\left( x\right) z\leq e\left(
x\right) \left( z+e\left( y\right) \right) \leq e\left( x\right) \left(
y+z\right) \text{.}
\end{equation*}%
Hence $e\left( x\right) \left( y+z\right) =e\left( x\right) y+e\left(
x\right) z$.

If $y$ and $z$ are both negative, then $-y$ and $-z$ are positive, hence 
\begin{equation*}
x\left( y+z\right) =-x\left( -y-z\right) =-x(-y)-x(-z)=xy+xz.
\end{equation*}%
\bigskip
\end{proof}

\begin{corollary}
\label{x(y+y)=xy+xy}Let $S$ be a solid and let $x,y\in S$. Then $x\left(
y+y\right) =xy+xy$.
\end{corollary}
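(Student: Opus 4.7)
The plan is to invoke Corollary \ref{dist positivos} directly with $z=y$. Since $y$ trivially shares its sign with itself, the hypothesis of that corollary is satisfied and the conclusion $x(y+y)=xy+xy$ follows immediately.

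The only point worth checking is that every element of $S$ has a well-defined sign in the dichotomy used by Corollary \ref{dist positivos}. By the definition preceding Definition \ref{Absolutevalue}, $y$ is positive when $e(y)\leq y$ and negative when $y<e(y)$; a magnitude $y=e(y)$ satisfies $e(y)\leq y$ by reflexivity and thus counts as positive. So regardless of which case $y$ falls into, both summands in $y+y$ lie on the same side, and Corollary \ref{dist positivos} applies with $z=y$.

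I expect no real obstacle: the statement is an immediate specialization of the previous corollary. If one wished to avoid appealing to Corollary \ref{dist positivos} and instead proceed from the distributivity criterion (Theorem \ref{dist chacterization}), one could alternatively observe that $R(y)+R(y)\geq R(y)\geq R(x)$ fails in general, so the route through the criterion would require verifying the first disjunct $e(x)(y+y)=e(x)y+e(x)y$ separately; this makes the same-sign appeal by far the cleanest path.
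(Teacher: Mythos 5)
Your proof is correct and is precisely the paper's intended argument: the corollary is stated without proof immediately after Corollary \ref{dist positivos} because it is the specialization $z=y$, and your check that every element has a well-defined sign (so $y$ trivially shares its sign with itself) covers the only point that needs verifying.
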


To prove Theorem \ref{dist chacterization} we distinguish three cases: (i) $%
x $ is not zeroless, (ii) $y$ or $z$ is not zeroless, (iii) $x,y,z$ are
zeroless. In the first case we may assume that $y$ and $z$ are zeroless
hence $R\left( y\right) +R\left( z\right) <R\left( x\right) $. Then the
criterion states that $e\left( x\right) \left( y+z\right) =e\left( x\right)
y+e\left( x\right) z$ is equivalent to itself. In Proposition \ref{dist n
non-zeroless} we give a more operational version of this criterion. In the
second case one always has $R\left( x\right) \leq R\left( y\right) +R\left(
z\right) $. Then we must show that $x\left( z+e\left( y\right) \right)
=xz+xe\left( y\right) $ always holds; notice that if this particular form of
distributivity holds, automatically $R\left( x\right) \leq R\left(
e(y\right) )+R\left( z\right) $, because $R\left( e(y\right) )=M$. We do so
in Proposition \ref{dist y magnitude} below. In the third case both
conditions of the criterion may happen. Its proof is rather involved and
needs some preliminary results where we take profit of the fact that all
elements are zeroless.

\begin{proposition}
\label{dist n non-zeroless}Let $S$ be a solid, $x\in S$ and $y,z\in S^{\ast
} $. Suppose that $x=e\left( x\right) $. Then $e(x)\left( y+z\right)
=e(x)y+e(x)z$ if and only if $e\left( x\right) \left( y+z\right) =e\left(
x\right) y$ or $e\left( x\right) \left( y+z\right) =e\left( x\right) z$.
\end{proposition}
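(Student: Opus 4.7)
For the forward direction, the products $e(x)y$ and $e(x)z$ are themselves magnitudes by Axiom \ref{Axiom escala} together with Theorem \ref{assembly7}. Absorption (Proposition \ref{theoremassembly}.\ref{assembly0}) then forces $e(x)y+e(x)z$ to coincide with one of its summands, so an equality $e(x)(y+z)=e(x)y+e(x)z$ immediately yields $e(x)(y+z)=e(x)y$ or $e(x)(y+z)=e(x)z$.

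For the backward direction, by the symmetry of the right-hand disjunction in $y$ and $z$, it suffices to treat the case $e(x)(y+z)=e(x)y$. Since $e(x)y$ and $e(x)z$ are both magnitudes, Proposition \ref{e(x)maiore(y)iff} reduces the target equality to the single inequality $e(x)z\leq e(x)y$; for then $e(x)y+e(x)z=e(x)y=e(x)(y+z)$.

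To establish this inequality I would combine subdistributivity with a monotonicity argument in two short steps. First, using commutativity, associativity, and Axiom \ref{assemblysim}, the decomposition $(-y)+(y+z)=e(y)+z$ allows an application of subdistributivity (Theorem \ref{subdist}) to give $e(x)(e(y)+z)\leq e(x)(-y)+e(x)(y+z)$. After simplification via Proposition \ref{es=ex} (so that $e(x)(-y)=e(x)y$), the hypothesis $e(x)(y+z)=e(x)y$, and idempotency of addition on magnitudes, the right-hand side collapses to $e(x)y$, yielding $e(x)(e(y)+z)\leq e(x)y$. Second, since $0\leq e(y)$ by Proposition \ref{prop zero}.\ref{e(x)maiorzero}, compatibility with addition gives $z\leq e(y)+z$; multiplying by the magnitude $e(x)$ via Axiom \ref{Axiom Amplification} produces $e(x)z\leq e(x)(e(y)+z)$, and chaining the two inequalities completes the argument.

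The one subtle point is that in an assembly $(-y)+y$ equals $e(y)$ rather than $0$, so the cancellation inside $(-y)+(y+z)$ produces $e(y)+z$ rather than $z$ itself. This is precisely what obliges the extra monotonicity step via Axiom \ref{Axiom Amplification} to transfer the inequality back to $e(x)z$; otherwise the argument is a straightforward chain of subdistributivity and absorption.
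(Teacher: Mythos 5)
Your forward direction coincides with the paper's: $e(x)y$ and $e(x)z$ are magnitudes by Axiom \ref{Axiom escala} and Theorem \ref{assembly7}, and absorption does the rest. Your backward direction, however, takes a genuinely different route. The paper argues by contradiction through Lemma \ref{lemma e(x)y=e(x)z} (whose proof is long and is the real workhorse of the section), whereas you reduce the claim, via Proposition \ref{e(x)maiore(y)iff} applied to the magnitudes $e(x)y$ and $e(x)z$, to the single inequality $e(x)z\leq e(x)y$, and derive it from subdistributivity applied to the decomposition $(-y)+(y+z)=e(y)+z$ together with Proposition \ref{es=ex} and idempotency. This is more elementary and self-contained, and it makes visible exactly which order axioms carry the argument.

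There is one gap, in your second step. Axiom \ref{Axiom Amplification} has the form $e(y)\leq y\leq z\rightarrow e(x)y\leq e(x)z$: the smaller factor is required to be \emph{positive}. You apply it to $z\leq e(y)+z$ to conclude $e(x)z\leq e(x)\left(e(y)+z\right)$, which needs $e(z)\leq z$, and nothing in the hypotheses guarantees that $z$ is positive; as written the step fails for negative $z$. The repair is within the tools you already invoke: if $z<e(z)$, then $e(z)<-z$, so $-z$ is positive, and $-z\leq e(y)+(-z)=-\left(e(y)+z\right)$ by Proposition \ref{prop sym}.\ref{assembly5} and \ref{prop sym}.\ref{assembly6} together with compatibility with addition; Axiom \ref{Axiom Amplification} then gives $e(x)(-z)\leq e(x)\left(-\left(e(y)+z\right)\right)$, and Proposition \ref{es=ex} strips the signs to recover $e(x)z\leq e(x)\left(e(y)+z\right)$ in this case as well. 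With that case added, your chain $e(x)z\leq e(x)\left(e(y)+z\right)\leq e(x)y$ closes and the argument is complete.
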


\begin{proof}
The direct implication follows directly from Axiom \ref{Axiom escala}\ and
Proposition \ref{theoremassembly}.\ref{assembly0}. For the inverse
implication, assume that $e\left( x\right) \left( y+z\right) =e\left(
x\right) y$ or $e\left( x\right) \left( y+z\right) =e\left( x\right) z$. Now 
$e\left( x\right) \left( y+z\right) \neq e\left( x\right) y+e\left( x\right)
z$ is self-contradictory, for then $e\left( x\right) y=e\left( x\right) z$
by Lemma \ref{lemma e(x)y=e(x)z}, hence in both cases $e\left( x\right)
\left( y+z\right) =e\left( x\right) y+e\left( x\right) z$. Hence $e\left(
x\right) \left( y+z\right) =e\left( x\right) y+e\left( x\right) z$ indeed.
\end{proof}

\begin{proposition}
\label{dist y magnitude}Let $S$ be a solid, $x,y,z\in S$. Then $x\left(
z+e\left( y\right) \right) =xz+xe\left( y\right) $.
\end{proposition}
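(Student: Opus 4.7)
The plan is to invoke the modified distributivity axiom (Axiom \ref{Axiom distributivity}) with summands $z$ and $e(y)$, giving
\[
xz + xe(y) = x(z + e(y)) + e(x)z + e(x)e(y),
\]
and then to absorb the correction term $e(x)z + e(x)e(y)$ into $x(z+e(y))$. The absorption reduces (via Theorem \ref{emaiory e+y=e} and Axiom \ref{assemblyneut}, noting that a sum of magnitudes is a positive magnitude) to proving the magnitude inequality $e(x)z + e(x)e(y) \leq e(x(z+e(y)))$.

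To prove this inequality, I would first compute the right-hand side. Axiom \ref{Axiom e(xy)=e(x)y+e(y)x}, together with the linearity and idempotency of $e$ (Proposition \ref{theoremassembly}) and Proposition \ref{dist neutrices} applied to $x(e(z)+e(y))$, gives
\[
e(x(z+e(y))) = e(x)(z+e(y)) + xe(z) + xe(y).
\]
The correction then splits as a sum of two magnitudes: $e(x)e(y)$ is bounded by $xe(y)$ via Lemma \ref{xemaioree}, while $e(x)z$ is bounded by $e(x)(z+e(y))$ by applying Axiom \ref{Axiom Amplification} (monotonicity of scaling by a magnitude) to the inequality $z \leq z + e(y)$, which itself follows from $0 \leq e(y)$ (Proposition \ref{prop zero}.\ref{e(x)maiorzero}) combined with compatibility of addition with order. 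Since the two upper bounds $xe(y)$ and $e(x)(z+e(y))$ are among the summands on the right above, summing gives $e(x)z + e(x)e(y) \leq e(x(z+e(y)))$, as required.

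The main delicate step is precisely the bound $e(x)z \leq e(x)(z+e(y))$, which is where Axiom \ref{Axiom Amplification} is used; this is exactly what permits the argument to proceed uniformly, with no case distinction on the relative size of $e(y)$ and $e(z)$ (in the case $e(y) \leq e(z)$ the conclusion would already follow from Proposition \ref{dist neutrices dif} applied with the roles of $y$ and $z$ interchanged). Once the absorption is in place, the modified distributivity identity above yields $xz + xe(y) = x(z+e(y))$ immediately.
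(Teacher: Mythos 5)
Your overall strategy coincides with the paper's: both proofs invoke Axiom \ref{Axiom distributivity} with summands $z$ and $e(y)$, compute $e\left(x\left(z+e(y)\right)\right)=e(x)\left(z+e(y)\right)+xe(z)+xe(y)$, show that the correction term $e(x)z+e(x)e(y)$ is dominated by this magnitude, and absorb it. The one place you diverge is in how the domination is established, and that is where your argument has a gap. You justify $e(x)z\leq e(x)\left(z+e(y)\right)$ by applying Axiom \ref{Axiom Amplification} to $z\leq z+e(y)$, describing the axiom as monotonicity of scaling by a magnitude. But that axiom is not unrestricted monotonicity: it reads $\left(e(y)\leq y\leq z\right)\rightarrow e(x)y\leq e(x)z$, so the \emph{smaller} element must be positive. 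Applied to $z\leq z+e(y)$ this requires $e(z)\leq z$, which you never verify and which fails when $z$ is negative. The positivity hypothesis cannot be dropped: from $-1\leq 0$ unrestricted monotonicity would give $e(x)=e(x)(-1)\leq e(x)\cdot 0=0$ (using Proposition \ref{es=ex} and Proposition \ref{x0=0}), forcing $e(x)=0$, which is false for the nonminimal magnitudes guaranteed by Axiom \ref{existencia neutrices}. So, somewhat ironically, the step you advertise as avoiding all case distinctions does require one — on the sign of $z$.

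The inequality you need is nevertheless true and the repair is short: if $z<e(z)$, then $-z$ is positive, so $e(-z)\leq -z\leq -z+e(y)$ and Axiom \ref{Axiom Amplification} gives $e(x)(-z)\leq e(x)\left(-z+e(y)\right)$; since $-\left(z+e(y)\right)=-z-e(y)=-z+e(y)$ by Proposition \ref{prop sym}, Proposition \ref{es=ex} converts this back into $e(x)z\leq e(x)\left(z+e(y)\right)$, and the case $z=e(z)$ is immediate from Corollary \ref{Cor dist magnitudes}. With that patch the rest of your argument — the bound $e(x)e(y)\leq xe(y)$ from Lemma \ref{xemaioree}, the summation, and the absorption via Theorem \ref{emaiory e+y=e} — is sound, and at the corresponding step it is arguably more transparent than the paper's own appeal to Proposition \ref{dist n non-zeroless} and Proposition \ref{dist neutrices}.
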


\begin{proof}
One has $e\left( x\left( z+e\left( y\right) \right) \right) =e\left(
x\right) \left( z+e\left( y\right) \right) +x\left( e\left( z\right)
+e\left( y\right) \right) $, by Axiom \ref{Axiom e(xy)=e(x)y+e(y)x}. Then $%
e\left( x\right) z+e\left( x\right) e\left( y\right) \leq e\left( x\right)
z+e\left( x\right) e\left( y\right) +xe\left( z\right) +xe\left( y\right)
=e\left( x\left( z+e\left( y\right) \right) \right) $, by Proposition \ref%
{dist n non-zeroless} and Proposition \ref{dist neutrices}. Hence $e\left(
x\left( z+e\left( y\right) \right) \right) =e\left( x\left( z+e\left(
y\right) \right) \right) +e\left( x\right) z+e\left( x\right) e\left(
y\right) $, by Proposition \ref{e(x)maiore(y)iff}. Then $x\left( z+e\left(
y\right) \right) =x\left( z+e\left( y\right) \right) +e\left( x\right)
z+e\left( x\right) e\left( y\right) =xz+xe\left( y\right) $, by Axiom \ref%
{Axiom distributivity}.
\end{proof}

We now turn to the third case which is in some sense the generic case. The
proof of the necessity part needs some calculatory properties of multiples
of magnitudes. We will see that in the presence of the conditions of the
criterion for distributivity the correction term $e\left( x\right) y+e\left(
x\right) z$ must be smaller than the magnitude of $x\left( y+z\right) $.
Combining this fact with subdistributivity will yield distributivity.

Next lemma is a form of cross-multiplication. It is stated as an
implication. The converse is much more involved and will be given in Lemma %
\ref{if...then R(x)<=R(y)} below.

\begin{lemma}
\label{Lemma R(x)<=R(y)}Let $S$ be a solid and let $x,y\in S^{\ast}$. If $%
R\left( x\right) \leq R\left( y\right) $ then $e\left( x\right) y\leq
e\left( y\right) x$.
\end{lemma}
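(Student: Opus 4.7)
The plan is to rewrite both sides of the desired inequality as multiples of a common zeroless element $xy$ by the magnitudes $R(x)$ and $R(y)$, and then apply Proposition~\ref{xe(y)=xe(z)}.

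First I would unpack the definition. Since $x,y \in S^{\ast}$, both $R(x) = e(u(x))$ and $R(y) = e(u(y))$ are magnitudes (they are the outputs of the function $e$), and by hypothesis $R(x) \leq R(y)$. Then by Theorem~\ref{equivalences}.\ref{eu=xe(u)} applied to $x$, one has
\begin{equation*}
e(x) \;=\; x\,e(u(x)) \;=\; x\,R(x),
\end{equation*}
and analogously $e(y) = y\,R(y)$. Using commutativity and associativity of multiplication, this immediately gives
\begin{equation*}
e(x)\,y \;=\; (xy)\,R(x), \qquad e(y)\,x \;=\; (xy)\,R(y).
\end{equation*}

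Now since $R(x)$ and $R(y)$ are magnitudes, $R(x) = e(R(x))$ and $R(y) = e(R(y))$, and the hypothesis $R(x) \leq R(y)$ reads $e(R(x)) \leq e(R(y))$. Applying Proposition~\ref{xe(y)=xe(z)} with the element $xy$ on the left then yields
\begin{equation*}
(xy)\,R(x) \;=\; (xy)\,e(R(x)) \;\leq\; (xy)\,e(R(y)) \;=\; (xy)\,R(y),
\end{equation*}
which, translated back via the equalities above, is exactly $e(x)\,y \leq e(y)\,x$.

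There is no serious obstacle here: the lemma is essentially the assertion that the preorder on relative uncertainties transfers to the ``cross-multiplied'' form, and the identity $e(x) = x\,R(x)$ from Theorem~\ref{equivalences} does all the work. The one point that needs attention is that Proposition~\ref{xe(y)=xe(z)} requires the multiplicand to appear in the form $e(\cdot)$; this is automatic here because $R(x)$ and $R(y)$ are themselves magnitudes, so one can legitimately write them as $e(R(x))$ and $e(R(y))$ before invoking the proposition.
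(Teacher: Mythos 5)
Your proof is correct and is essentially the paper's own argument: both rest on Theorem \ref{equivalences}.\ref{eu=xe(u)} (equivalently Axiom \ref{e(u(x))=e(x)d(x)}) to relate $e(x)$, $x$ and $R(x)=e(u(x))$, and on Proposition \ref{xe(y)=xe(z)} to multiply the inequality $R(x)\leq R(y)$ through by $xy$. The only cosmetic difference is that the paper multiplies $e(x)x^{-1}\leq e(y)y^{-1}$ by $xy$ and then simplifies with $e(x)u(x)=e(x)$, whereas you rewrite $e(x)y$ and $e(y)x$ as $(xy)R(x)$ and $(xy)R(y)$ up front.
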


\begin{proof}
Assume that $R\left( x\right) \leq R\left( y\right) $. Then $e\left(
x\right) x^{-1}=e\left( u\left( x\right) \right) \leq e\left( u\left(
y\right) \right) =e\left( y\right) y^{-1}$. By Proposition \ref{xe(y)=xe(z)} 
\begin{equation*}
e\left( x\right) u\left( x\right) y=e\left( x\right) x^{-1}xy\leq e\left(
y\right) y^{-1}xy=e\left( y\right) u\left( y\right) x.
\end{equation*}%
Hence $e\left( x\right) y\leq e\left( y\right) x$ by Theorem \ref%
{equivalences}.\ref{eu=xe(u)}.
\end{proof}

Lemma \ref{lemma e(x)y=e(x)z} expresses the fact that when distributivity
does not hold for the magnitude of $x$ then $y$ and $z$ must be roughly of
the same order of magnitude, i.e. $e\left( x\right) y=e\left( x\right) z$.

\begin{lemma}
\label{lemma e(x)y=e(x)z}Let $S$ be a solid and let $x,y,z\in S$. If $%
e\left( x\right) \left( y+z\right) \neq e\left( x\right) y+e\left( x\right)
z $, then $e\left( x\right) y=e\left( x\right) z.$
\end{lemma}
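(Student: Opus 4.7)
The plan is to argue by contradiction. Combining the hypothesis $e(x)(y+z) \neq e(x)y + e(x)z$ with subdistributivity (Theorem \ref{subdist}) applied to $e(x), y, z$ gives the strict inequality $e(x)(y+z) < e(x)y + e(x)z$. Since both $e(x)y$ and $e(x)z$ are magnitudes by Axiom \ref{Axiom escala} (and Theorem \ref{assembly7}), absorption (Proposition \ref{theoremassembly}.\ref{assembly0}) reduces this sum to one of the two summands; the hypothesis and conclusion being symmetric in $y$ and $z$, I may assume without loss of generality that $e(x)y + e(x)z = e(x)y$, so $e(x)z \leq e(x)y$ and $e(x)(y+z) < e(x)y$.

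Suppose now for contradiction that $e(x)z < e(x)y$ strictly. I aim to derive the reverse inequality $e(x)y \leq e(x)(y+z) + e(x)z$. The key identity is $(y+z) + (-z) = y + e(z)$, which follows from associativity and Axiom \ref{assemblysim}. Applying subdistributivity to this decomposition, together with Proposition \ref{es=ex} (so that $e(x)(-z) = e(x)z$), yields
\[
e(x)(y + e(z)) \leq e(x)(y+z) + e(x)z.
\]
By Proposition \ref{dist y magnitude} the left-hand side equals $e(x)y + e(x)e(z)$; since $e(x)e(z)$ is a magnitude and Proposition \ref{prop zero}.\ref{e(x)maiorzero} ensures $0 \leq e(x)e(z)$, I get $e(x)y \leq e(x)y + e(x)e(z) \leq e(x)(y+z) + e(x)z$.

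The contradiction is then immediate: the right-hand side is a sum of two magnitudes, each strictly less than $e(x)y$, so by absorption the sum equals one of them and is itself strictly less than $e(x)y$, contradicting $e(x)y \leq e(x)(y+z) + e(x)z$. Hence $e(x)z < e(x)y$ is impossible and $e(x)y = e(x)z$. The main obstacle is identifying the right decomposition of $y$: writing $y = (y+z) + (-z)$ only recovers $y$ up to the magnitude $e(z)$, and Proposition \ref{dist y magnitude} is precisely what absorbs this discrepancy so that subdistributivity produces the reverse inequality needed to close the argument.
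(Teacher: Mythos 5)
Your argument takes a genuinely different route from the paper's, but as written it contains a circularity. The step that computes $e(x)(y+e(z)) = e(x)y + e(x)e(z)$ invokes Proposition \ref{dist y magnitude}. In the paper, the proof of Proposition \ref{dist y magnitude} cites Proposition \ref{dist n non-zeroless}, and the proof of Proposition \ref{dist n non-zeroless} cites precisely Lemma \ref{lemma e(x)y=e(x)z} --- the statement you are proving. So although Proposition \ref{dist y magnitude} appears earlier in the text, logically it sits downstream of this lemma, and you cannot use it here.

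Fortunately the circularity is local and easy to remove: you do not need the equality $e(x)(y+e(z)) = e(x)y + e(x)e(z)$, only the inequality $e(x)y \leq e(x)(y+e(z))$, and that follows directly from Axiom \ref{Axiom Amplification}. Indeed $y \leq y+e(z)$ by Proposition \ref{prop zero}.\ref{e(x)maiorzero} and compatibility with addition; if $y$ is positive, Amplification gives $e(x)y \leq e(x)(y+e(z))$ at once, and if $y$ is negative one applies the same to $-y$ and uses Proposition \ref{es=ex} together with $-(y+e(z)) = -y+e(z)$. With that substitution your chain $e(x)y \leq e(x)(y+e(z)) \leq e(x)(y+z) + e(x)z$ stands, and the absorption argument at the end closes the proof. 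The resulting argument is shorter than the paper's, which instead fixes signs and derives the successive estimates $y < e(y)$, $e(z) < -y$ and $y+y < -z$ before bounding $e(x)z$ by $e(x)y$; your decomposition $y+e(z) = (y+z)+(-z)$ plus subdistributivity replaces all of that. In fact, once you have $e(x)y \leq e(x)(y+z) + e(x)z$ you do not even need the auxiliary hypothesis $e(x)z < e(x)y$: absorption says the right-hand side equals $e(x)(y+z)$ or $e(x)z$; the first alternative contradicts $e(x)(y+z) < e(x)y$, so $e(x)y \leq e(x)z$, and equality follows.
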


\begin{proof}
Assume $e\left( x\right) \left( y+z\right) \neq e\left( x\right) y+e\left(
x\right) z$. Then $e\left( x\right) \left( y+z\right) <e\left( x\right)
y+e\left( x\right) z$ by Theorem \ref{subdist}. We may assume, without loss
of generality that $\left \vert y\right \vert \leq \left \vert z\right \vert 
$. Then by Axiom \ref{Axiom Amplification} 
\begin{equation}
e\left( x\right) y=e\left( x\right) \left \vert y\right \vert \leq e\left(
x\right) \left \vert z\right \vert =e\left( x\right) z.  \label{e(x)y<=e(x)z}
\end{equation}%
As a consequence $e\left( x\right) y+e\left( x\right) z=e\left( x\right) z$
by Proposition \ref{e(x)maiore(y)iff}, and we conclude that 
\begin{equation}
e\left( x\right) \left( y+z\right) <e\left( x\right) z.
\label{e(x)zmaiore(x)(y+z)}
\end{equation}

In order to show that $e\left( x\right) z\leq e\left( x\right) y$, from now
on we assume that $z$ is positive; the case that $z$ is negative is
analogous. We prove that $y$ and $z$ are of opposite sign and "not too
different", in a sense we prove that $y<-z/2$, note that $\left \vert
y\right \vert \leq \left \vert z\right \vert $ implies the lower bound $%
-z+e(y)\leq y+e(z)$. The inequality $y+y<-z$ will be obtained by successive
approximations. We prove firstly that $y<e\left( y\right) $. Suppose towards
a contradiction that $e\left( y\right) \leq y$. Then by compatibility with
addition and Axiom \ref{Axiom Amplification} one has $e\left( x\right) z\leq
e\left( x\right) \left( z+e\left( y\right) \right) \leq e\left( x\right)
\left( z+y\right) $, in contradiction with (\ref{e(x)zmaiore(x)(y+z)}).
Hence $y<e\left( y\right) $ and $\left \vert y\right \vert =-y$.

Secondly we show that $e\left( z\right) <-y$. Suppose towards a
contradiction that $-y\leq e\left( z\right) $. Then 
\begin{equation*}
-y+e\left( z\right) \leq e(z)+e\left( z\right) =e\left( z\right) \text{.}
\end{equation*}
On the other hand, because $y<e\left( y\right) $ 
\begin{equation*}
e\left( z\right) \leq e\left( y\right) +e\left( z\right) \leq-y+e\left(
z\right) .
\end{equation*}
Hence $-y+e\left( z\right) =e\left( z\right) $. But then 
\begin{align*}
e\left( x\right) \left( y+z\right) & =e\left( x\right) \left( -y-z\right)
=e\left( x\right) \left( -y+e\left( z\right) -z\right) \\
& =e\left( x\right) \left( e\left( z\right) -z\right) =e\left( x\right)
\left( -z\right) =e\left( x\right) z,
\end{align*}
in contradiction with (\ref{e(x)zmaiore(x)(y+z)}). Hence $e\left( z\right)
<-y$.

We show now that $y+y<-z$. Suppose towards a contradiction that $-z\leq y+y$%
. Using Axiom \ref{Axiom Amplification}, Theorem \ref{subdist} and (\ref%
{e(x)zmaiore(x)(y+z)}) one has%
\begin{align*}
e\left( x\right) z& =e\left( x\right) \left( z+z-z\right) \leq e\left(
x\right) \left( z+z+y+y\right) \\
& \leq e\left( x\right) \left( y+z\right) +e\left( x\right) \left(
y+z\right) =e\left( x\right) \left( y+z\right) <e\left( x\right) z,
\end{align*}%
which is a contradiction. Hence $y+y<-z$, and also $z+e\left( y\right) \leq
-y-y+e\left( z\right) $.

To finish the proof, using the facts that $y+y<-z$ and $e\left( z\right) <-y$%
, Axiom \ref{Axiom Amplification} and Theorem \ref{subdist} we see that%
\begin{align*}
e\left( x\right) z& \leq e\left( x\right) \left( z+e\left( y\right) \right)
\leq e\left( x\right) \left( -y-y+e\left( z\right) \right) \\
& \leq e\left( x\right) \left( -y-y-y\right) =e\left( x\right) \left(
y+y+y\right) \\
& \leq e\left( x\right) y+e\left( x\right) y+e\left( x\right) y=e\left(
x\right) y.
\end{align*}%
Hence 
\begin{equation}
e\left( x\right) z\leq e\left( x\right) y.  \label{e(x)ylargerthane(x)z}
\end{equation}%
Combining (\ref{e(x)y<=e(x)z}) and (\ref{e(x)ylargerthane(x)z}) one derives
that $e\left( x\right) z=e\left( x\right) y$.
\end{proof}

\begin{proof}[Proof of the necessity of the condition for distributivity of
Theorem \protect \ref{dist chacterization}.]
We need\linebreak \ only to consider the zeroless case. Assume firstly that $%
e\left( x\right) \left( y+z\right) =e\left( x\right) y+e\left( x\right) z$.
One has%
\begin{align*}
xy+xz& =x\left( y+z\right) +e\left( x\right) y+e\left( x\right) z \\
& =x\left( y+z\right) +e\left( x\left( y+z\right) \right) +e\left( x\right)
\left( y+z\right) \\
& =x\left( y+z\right) +xe\left( y+z\right) +e\left( x\right) \left(
y+z\right) +e\left( x\right) \left( y+z\right) \\
& =x\left( y+z\right) +xe\left( y+z\right) +e\left( x\right) \left(
y+z\right) \\
& =x\left( y+z\right) +e\left( x\left( y+z\right) \right) =x\left(
y+z\right) .
\end{align*}%
Secondly, assume that $R\left( x\right) \leq R\left( y\right) +R\left(
z\right) $, where we may suppose that $e\left( x\right) \left( y+z\right)
<e\left( x\right) y+e\left( x\right) z$. Then $e\left( x\right) y=e\left(
x\right) z$ by Lemma \ref{lemma e(x)y=e(x)z} and $e\left( x\right) y\leq
e\left( y\right) x$ by Lemma \ref{Lemma R(x)<=R(y)}. Hence, using
Proposition \ref{dist neutrices} and Axiom \ref{Axiom e(xy)=e(x)y+e(y)x}%
\begin{align*}
x\left( y+z\right) & =x\left( y+z\right) +e\left( x\left( y+z\right) \right)
\\
& =x\left( y+z\right) +e\left( x\right) \left( y+z\right) +xe\left( y\right)
+xe\left( z\right) \\
& =x\left( y+z\right) +e\left( x\right) \left( y+z\right) +xe\left( y\right)
+xe\left( z\right) +e\left( x\right) y \\
& =x\left( y+z\right) +xe\left( y+z\right) +e\left( x\right) y \\
& =x\left( y+z\right) +e\left( x\right) y+e\left( x\right) z \\
& =xy+xz.
\end{align*}
\end{proof}

We now turn to the sufficiency of this condition for distributivity of
Theorem \ref{dist chacterization}. For the zeroless case, the converse of
Lemma \ref{Lemma R(x)<=R(y)} is needed. The proof of this converse uses two
important properties of the unity element. First, one has $e\left( x\right)
u\left( y\right) =e\left( x\right) $ for arbitrary $x,y$, generalizing
Theorem \ref{equivalences}.\ref{eu=xe(u)}, and second the decomposition $%
u\left( x\right) =1+e\left( u\left( x\right) \right) $.

\begin{remark}
\label{u(x)-1}By Axiom \ref{numexterno} there is a precise element $b$ such
that $u\left( x\right) -1=b+e\left( u\left( x\right) -1\right) =b+e\left(
u\left( x\right) \right) $. So one can write $u\left( x\right) =1+b+e\left(
u\left( x\right) \right) $ and we must show that $b$ can be chosen equal to $%
0$.
\end{remark}

We prove first some preliminary results. If an element $x$ is zeroless, the
absolute value of the precise part of its decomposition must be larger than
its magnitude. Also, dividing the magnitude of $x$ by $x$ itself is the same
as dividing the magnitude by any of its precise parts.

\begin{proposition}
\label{proposition e(x)/x=e(x)/a}Let $S$ be a solid. Let $x=a+e\left(
x\right) \in S^{\ast }$ be such that $e\left( a\right) =0$. Then
\end{proposition}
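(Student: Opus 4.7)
The plan is to establish two parts: (i) $e(x) < |a|$, and (ii) $e(x)x^{-1} = e(x)a^{-1}$. First I note that $a \neq 0$, since otherwise $x = 0 + e(x) = e(x)$ contradicts $x \in S^{\ast }$; so by Proposition \ref{precise closed} the inverse $a^{-1}$ exists and is precise, and by Corollary \ref{u(a)=1} one has $u(a) = 1$.

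For (i), I would argue by contradiction. Suppose $|a| \leq e(x)$. Since $|a|$ is positive, Theorem \ref{emaiory e+y=e} gives $e(x) + |a| = e(x)$. If $a \geq 0$ then $a = |a|$ and this already reads $a + e(x) = e(x)$. If $a < 0$ then $-a = |a|$, and combining $-a + e(x) = e(x)$ with $-e(x) = e(x)$ (Proposition \ref{prop sym}.\ref{assembly6}) and $-(y+z) = -y-z$ (Proposition \ref{prop sym}.\ref{assembly5}) gives $a + e(x) = e(x)$ again. Either way $x = a + e(x) = e(x)$, contradicting $x \in S^{\ast }$.

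For (ii), the crux is the identity $e(x) \cdot x = e(x) \cdot a$; from it the conclusion follows by multiplying both sides by $x^{-1}a^{-1}$, using $e(x)u(x) = e(x)$ from Theorem \ref{equivalences}.\ref{eu=xe(u)} together with $u(a) = 1$. To prove the identity I would apply Proposition \ref{dist y magnitude}, which handles precisely \emph{factor times (element plus magnitude)}, to write $e(x) \cdot x = e(x)(a + e(x)) = e(x)a + e(x)e(x)$. Part (i) combined with Axiom \ref{Axiom Amplification} yields $e(x)e(x) \leq e(x)|a|$, and Proposition \ref{es=ex} simplifies $e(x)|a| = e(x)a$ because the factor $e(x)$ is a magnitude. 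Hence $e(x)e(x) \leq e(x)a$, so Proposition \ref{e(x)maiore(y)iff} absorbs the smaller summand and gives $e(x)a + e(x)e(x) = e(x)a$. Thus $e(x)\cdot x = e(x)\cdot a$.

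The main obstacle is disciplined sign management: the case $a<0$ in part (i), and the step $e(x)|a| = e(x)a$ in part (ii), both rely on Proposition \ref{es=ex} and Proposition \ref{prop sym}.\ref{assembly6}. A second delicate point is recognizing that the restricted distributivity of Proposition \ref{dist y magnitude} suffices to expand $e(x)(a+e(x))$, so one does not need to invoke the general distributivity criterion (Theorem \ref{dist chacterization}), which is only proved in the next section and would be circular to use here.
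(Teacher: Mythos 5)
Your proof is correct, and for part (2) it takes a genuinely different route from the paper's. The paper works downstairs with the inverses: it establishes the two inequalities $e(u(x))\le e(x)a^{-1}$ and $e(x)a^{-1}\le e(u(x))$ separately --- the first from $1=aa^{-1}\le xa^{-1}$ and Axiom \ref{Axiom Amplification}, the second via the expansion $e(x)a^{-1}=e(x)x^{-1}\left(1+e(x)a^{-1}\right)$ followed by part (1) and subdistributivity (Theorem \ref{subdist}) --- and then concludes by antisymmetry. You instead prove the single identity $e(x)x=e(x)a$ upstairs, expanding $e(x)(a+e(x))$ with Proposition \ref{dist y magnitude} and absorbing $e(x)e(x)$ into $e(x)a$ via part (1), Axiom \ref{Axiom Amplification}, Proposition \ref{es=ex} and Proposition \ref{e(x)maiore(y)iff} (both products are magnitudes by Axiom \ref{Axiom escala} and Theorem \ref{assembly7}, so the absorption applies), and only then cancel by multiplying with $x^{-1}a^{-1}$, using $e(x)u(x)=e(x)$ and $u(a)=1$. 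This trades the paper's two order estimates on inverses for one absorption argument plus an exact cancellation, which is arguably cleaner; neither route is more elementary, since both depend on Axiom \ref{Axiom distributivity} indirectly (yours through Proposition \ref{dist y magnitude}, the paper's through Theorem \ref{subdist}), and your circularity concern about Theorem \ref{dist chacterization} is correctly resolved, as Proposition \ref{dist y magnitude} precedes this proposition and does not use the criterion. Part (1) is essentially the paper's argument with the case split on the sign of $a$ instead of the sign of $x$; just make explicit the appeal to totality of the order to pass from $\lnot\left(|a|\le e(x)\right)$ to $e(x)<|a|$, and note that the remaining equality $e(x)x^{-1}=e(u(x))$ is exactly Axiom \ref{e(u(x))=e(x)d(x)}.
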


\begin{enumerate}
\item \label{e(x)<moda}$e\left( x\right) <\left \vert a\right \vert $.

\item \label{e(x)/a=e(x)/x}$e\left( x\right) a^{-1}=e\left( x\right)
x^{-1}=e\left( u\left( x\right) \right) $.
\end{enumerate}

\begin{proof}
\ref{e(x)<moda}. Assume firstly that $e\left( x\right) <x$. Suppose that $%
a\leq e\left( x\right) $. Then by compatibility with addition $x=a+e\left(
x\right) \leq e\left( x\right) +e\left( x\right) =e\left( x\right) $, which
is a contradiction. Then $0<a$ by Proposition \ref{order sum}.\ref{y<=x y
positive then x positive}. Hence $e\left( x\right) <a=\left \vert
a\right
\vert $.

Assume secondly that $x<e\left( x\right) $. Then $a+e\left( x\right) <e(x)$,
hence $e(x)<-a$ by compatibility with addition. By transitivity $0<-a$, so $%
-a=\left \vert a\right \vert $. Hence $e\left( x\right) <\left \vert
a\right
\vert $.

\ref{e(x)/a=e(x)/x}. By Part \ref{e(x)<moda} one has $0\leq e\left( x\right)
<\left \vert a\right \vert $. We start by proving that $e\left( x\right)
a^{-1}\leq e\left( u\left( x\right) \right) $. Assume firstly $0<a$. Now $%
a\leq a+e\left( x\right) =x$. Hence $1=aa^{-1}\leq xa^{-1}$. Then by Axiom %
\ref{Axiom Amplification} and Theorem \ref{equivalences}.\ref{eu=xe(u)}%
\begin{equation}
e\left( u\left( x\right) \right) =e\left( x\right) x^{-1}=e\left( x\right)
x^{-1}\cdot 1\leq e\left( x\right) u\left( x\right) a^{-1}=e\left( x\right)
a^{-1}.  \label{desig 1 e(x)/a=e(x)/x}
\end{equation}%
Secondly, assume $a<0$. Then $0<-a$ and $0<\left( -a\right) ^{-1}$ by
Proposition \ref{1/x is positive}. Moreover, $-a\leq -a+e\left( x\right) =-x$
by Proposition \ref{prop zero}.\ref{e(x)maiorzero}. Hence $%
1=(-a)(-a)^{-1}\leq (-x)(-a)^{-1}=xa^{-1}$. Then we derive (\ref{desig 1
e(x)/a=e(x)/x}) as above.

We prove now that $e\left( x\right) a^{-1}\leq e\left( u\left( x\right)
\right) $. Using Theorem \ref{equivalences}.\ref{eu=xe(u)}\ and Proposition %
\ref{u(a)=1} one derives

\begin{align*}
e\left( x\right) a^{-1}& =e\left( x\right) u\left( x\right) a^{-1}=e\left(
x\right) x^{-1}(a+e(x))a^{-1} \\
& =e\left( x\right) x^{-1}\left( 1+e\left( x\right) a^{-1}\right) .
\end{align*}%
Then by Part \ref{e(x)<moda}%
\begin{equation*}
e\left( x\right) a^{-1}\leq e\left( x\right) x^{-1}\left( 1+aa^{-1}\right)
=e\left( x\right) x^{-1}\left( 1+1\right) .
\end{equation*}%
Hence by Theorem \ref{subdist} 
\begin{equation}
e\left( x\right) a^{-1}\leq e\left( x\right) x^{-1}+x^{-1}e\left( x\right)
=x^{-1}e\left( x\right) =e\left( u\left( x\right) \right) .
\label{desig 2 e(x)/a=e(x)/x}
\end{equation}%
The result follows from (\ref{desig 1 e(x)/a=e(x)/x}) and (\ref{desig 2
e(x)/a=e(x)/x}).
\end{proof}

Using the previous proposition one shows that the element $b$ of Remark \ref%
{u(x)-1}\ has to be "small" in the sense that the absolute value of $b$ is
less than or equal to the magnitude of the unity of $x$.

\begin{lemma}
\label{abs(b)<=e(u)}Let $S$ be a solid. Let $x=a+e\left( x\right) \in
S^{\ast }$. Suppose $u\left( x\right) =1+b+e\left( u\left( x\right) \right) $%
. Then $\left \vert b\right \vert \leq e\left( u\left( x\right) \right) $.
\end{lemma}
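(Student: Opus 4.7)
The strategy is to compute $a \cdot u(x)$ in two different ways, equate the results, and extract the information about $b$ by multiplying through by $a^{-1}$.

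On one hand, since $a$ is precise (as in Proposition \ref{proposition e(x)/x=e(x)/a}, we may take $e(a) = 0$), Proposition \ref{Prop dist a(x+y)} supplies full distributivity, yielding
\[
a \cdot u(x) = a(1 + b + e(u(x))) = a + ab + a\, e(u(x)).
\]
By Proposition \ref{proposition e(x)/x=e(x)/a}.\ref{e(x)/a=e(x)/x}, $e(u(x)) = e(x)\, a^{-1}$; combined with $a a^{-1} = u(a) = 1$ (Corollary \ref{u(a)=1}), this gives $a\, e(u(x)) = e(x)$, so $a \cdot u(x) = a + ab + e(x)$.

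On the other hand, starting from $x u(x) = x$ and applying Axiom \ref{Axiom distributivity} to $u(x)(a + e(x))$ yields
\[
u(x) a + u(x) e(x) = u(x) x + e(u(x)) a + e(u(x)) e(x).
\]
Using $u(x) e(x) = e(x)$ (Theorem \ref{equivalences}.\ref{eu=xe(u)}), $u(x) x = x = a + e(x)$, and $e(u(x)) a = e(x)$ (from the same identification as above), then absorbing $e(u(x)) e(x) \le e(x)$ via Lemma \ref{xemaioree} and Theorem \ref{emaiory e+y=e}, this collapses to $u(x) a + e(x) = a + e(x)$. Combining with the first computation through commutativity and the cancellation law (Theorem \ref{Cancelation law}), I obtain $ab + e(x) = e(x)$.

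Since $a \ne 0$ (otherwise $x = a + e(x) = e(x)$ would contradict $x \in S^{\ast }$), $a^{-1}$ is precise by Proposition \ref{precise closed}; multiplying $ab + e(x) = e(x)$ through by $a^{-1}$ using Proposition \ref{Prop dist a(x+y)} and again $a^{-1} e(x) = e(u(x))$ gives $b + e(u(x)) = e(u(x))$. Applying Theorem \ref{emaiory e+y=e} to the positive and negative parts of the precise element $b$ then delivers the desired $|b| \le e(u(x))$. The main technical difficulty is the careful bookkeeping of the magnitude corrections arising from each invocation of Axiom \ref{Axiom distributivity}; each such term must be shown to be absorbed via the absorption results of Section \ref{subsection preservation properties}, and the crucial bridge between the precise and magnitude sides of the argument is the identity $a\, e(u(x)) = e(x)$ supplied by Proposition \ref{proposition e(x)/x=e(x)/a}.
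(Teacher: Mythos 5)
Your proof is correct, but it is organized differently from the paper's. The paper expands $x=xu(x)=(a+e(x))(1+b+e(u(x)))$ in one go, justifying the expansion by the distributivity criterion (the relative uncertainties involved are $0$ or $M$); this produces the identity $e(x)=ab+e(x)(1+b)+e(x)$, and the paper then needs an extra absorption step --- taking magnitudes of both sides and using linearity of $e$ --- to show $e(x)(1+b)\leq e(x)$ before concluding $|ab|\leq e(x)$. You instead compute $a\,u(x)$ in two ways: once by precise-element distributivity (Proposition \ref{Prop dist a(x+y)}), and once by relating $u(x)a$ to $a$ through Axiom \ref{Axiom distributivity} applied to $u(x)(a+e(x))$ together with $u(x)e(x)=e(x)$ (Theorem \ref{equivalences}.\ref{eu=xe(u)}). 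This reaches the same pivotal identity $ab+e(x)=e(x)$ while never producing the awkward term $e(x)(1+b)$, so your bookkeeping is in that respect cleaner; the price is an extra application of the cancellation law and of the identity $a\,e(u(x))=e(x)=e(u(x))\,a$, which you correctly extract from Proposition \ref{proposition e(x)/x=e(x)/a}.\ref{e(x)/a=e(x)/x}. Both arguments finish identically, by dividing by the nonzero precise element $a$. One small citation nit: for the final step, from $b+e(u(x))=e(u(x))$ the conclusion $|b|\leq e(u(x))$ follows directly from Axiom \ref{Axiom e(x)maiory}, which gives both $b\leq e(u(x))$ and $-b\leq e(u(x))$; Theorem \ref{emaiory e+y=e} assumes $b$ positive, which you do not know, so the axiom is the right reference here.
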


\begin{proof}
Because $R\left( e\left( x\right) \right) =M=R\left( e\left( u\left(
x\right) \right) \right) $ and $R(a)=0$, one derives 
\begin{align*}
x& =xu\left( x\right) =\left( a+e\left( x\right) \right) \left( 1+b+e\left(
u\left( x\right) \right) \right) \\
& =(a+e\left( x\right) )(1+b)+\left( a+e\left( x\right) \right) e\left(
u\left( x\right) \right) \\
& =a\left( 1+b\right) +e\left( x\right) \left( 1+b\right) +xe\left( u\left(
x\right) \right) .
\end{align*}%
Then, because $a$ is precise,%
\begin{equation*}
a+e\left( x\right) =a+ab+e\left( x\right) (1+b)+e\left( x\right) .
\end{equation*}%
Hence 
\begin{equation*}
e\left( x\right) =ab+e\left( x\right) \left( 1+b\right) +e\left( x\right) .
\end{equation*}%
Now Proposition \ref{e(x)maiore(y)iff} implies that $e\left( x\right) \left(
1+b\right) \leq e\left( x\right) $, for 
\begin{align*}
e\left( x\right) & =e\left( e\left( x\right) \right) =e\left( ab\right)
+e\left( e\left( x\right) \left( 1+b\right) \right) +e\left( e\left(
x\right) \right) \\
& =e\left( x\right) +e\left( x\right) \left( 1+b\right) .
\end{align*}%
Hence $e\left( x\right) =ab+e\left( x\right) $. Then $\left \vert
ab\right
\vert \leq e\left( x\right) $. Hence $\left \vert b\right \vert
\leq e\left( x\right) a^{-1}=e\left( u\left( x\right) \right) $ by
Proposition \ref{proposition e(x)/x=e(x)/a}.\ref{e(x)/a=e(x)/x}.
\end{proof}

We now show that a unity can be written as the sum of the precise unity $1$
and the imprecision of the unity.

\begin{theorem}
\emph{(Expansion of unity)}\label{Proposition expansion}Let $S$ be a solid
and let $x\in S^{\ast }$. Then $u\left( x\right) =1+e\left( u\left( x\right)
\right) $.
\end{theorem}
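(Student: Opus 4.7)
The plan is to combine Remark \ref{u(x)-1} with Lemma \ref{abs(b)<=e(u)} and then kill the precise ``residue'' using the order-absorption criterion of Theorem \ref{emaiory e+y=e}. Concretely, by Remark \ref{u(x)-1} we already have a representation
\[
u(x)=1+b+e(u(x))
\]
with $b$ precise, and Lemma \ref{abs(b)<=e(u)} gives $|b|\le e(u(x))$. So the entire task reduces to showing that $b+e(u(x))=e(u(x))$, which by associativity yields the claim $u(x)=1+e(u(x))$.

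To prove $b+e(u(x))=e(u(x))$ I would split on the sign of $b$. If $b$ is positive, then $b=|b|\le e(u(x))$ and $b$ is positive, so Theorem \ref{emaiory e+y=e} applied with $y=b$ and the magnitude $e(u(x))$ gives directly $e(u(x))+b=e(u(x))$. If $b$ is negative, apply the same theorem to $-b$, which is positive with $-b=|b|\le e(u(x))$, to obtain $e(u(x))+(-b)=e(u(x))$. Adding $b$ to both sides and using that $b$ is precise, so $b+(-b)=e(b)=0$, one recovers $e(u(x))+b=e(u(x))$.

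Putting the pieces together, $u(x)=1+b+e(u(x))=1+(b+e(u(x)))=1+e(u(x))$, which is the claim. I expect no real obstacle at this step; the substantial work has been done in Lemma \ref{abs(b)<=e(u)} (controlling the size of the precise residue $b$) and in Proposition \ref{proposition e(x)/x=e(x)/a} used there. The only mild subtlety is the sign split for $b$, which is handled by invoking Proposition \ref{prop sym} and precision ($e(b)=0$) to transfer the absorption statement from $-b$ back to $b$. No new structural result about solids is required beyond what has already been established earlier in the excerpt.
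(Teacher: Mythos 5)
Your proof is correct and follows essentially the same route as the paper: decompose $u(x)=1+b+e(u(x))$ via Remark \ref{u(x)-1}, bound $|b|\le e(u(x))$ by Lemma \ref{abs(b)<=e(u)}, and conclude that $b$ is absorbed by $e(u(x))$. The only difference is that the paper asserts the absorption step without comment, while you justify it explicitly via Theorem \ref{emaiory e+y=e} and the sign split on $b$; that added detail is sound.
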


\begin{proof}
By Lemma \ref{abs(b)<=e(u)}\ one may suppose $u\left( x\right) =1+b+e\left(
u\left( x\right) \right) $ with $\left \vert b\right \vert \leq e\left(
u\left( x\right) \right) $. This means that $b+e\left( u\left( x\right)
\right) =e\left( u\left( x\right) \right) $. Hence $u\left( x\right)
=1+e\left( u\left( x\right) \right) $.
\end{proof}

\begin{corollary}
\label{1>e(u(x))}Let $S$ be a solid and let $x\in S^{\ast }$. Then $e\left(
u\left( x\right) \right) <1$.
\end{corollary}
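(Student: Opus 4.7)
The plan is to combine the expansion of unity just proved (Theorem \ref{Proposition expansion}) with the fact that a unity is strictly larger than its own magnitude (Corollary \ref{Cor positive}.\ref{u is positive}), and then use Theorem \ref{emaiory e+y=e} to convert that strict inequality into the desired strict inequality with $1$.

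More precisely, I would argue by contradiction. Assume that $1 \leq e(u(x))$. Since $1$ is positive by Proposition \ref{prop zero}.\ref{0<1}, Theorem \ref{emaiory e+y=e} (applied with the role of ``$y$'' played by $1$ and the role of ``$e(x)$'' played by $e(u(x))$) gives $e(u(x)) + 1 = e(u(x))$. Invoking the expansion of unity, Theorem \ref{Proposition expansion}, I then get
\begin{equation*}
u(x) \;=\; 1 + e(u(x)) \;=\; e(u(x)) + 1 \;=\; e(u(x)),
\end{equation*}
which contradicts Lemma \ref{prop dif}.\ref{udife(u)}, since $x \in S^{\ast}$ forces $u(x) \neq e(u(x))$. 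By totality of the order on the ordered assembly, the negation of $1 \leq e(u(x))$ yields $e(u(x)) < 1$, as required.

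The main obstacle I expect is purely conceptual: in a solid one cannot ``subtract'' $e(u(x))$ from both sides of the identity $e(u(x)) < 1 + e(u(x))$ coming directly from Corollary \ref{Cor positive}.\ref{u is positive} and Theorem \ref{Proposition expansion}, so one must resist the ring-theoretic reflex and instead route the strict inequality through the absorption criterion of Theorem \ref{emaiory e+y=e}. Once that reformulation is in mind, the argument collapses to a one-line contradiction against the zerolessness of $u(x)$.
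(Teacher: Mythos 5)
Your proposal is correct and is essentially the argument the paper leaves implicit: the corollary is stated without proof immediately after the Expansion of Unity, and the intended route is exactly yours, namely that $1\leq e(u(x))$ would force $u(x)=1+e(u(x))=e(u(x))$ via the absorption criterion of Theorem \ref{emaiory e+y=e}, contradicting Lemma \ref{prop dif}.\ref{udife(u)}. Your handling of the final step (totality plus the failure of $1\leq e(u(x))$ giving the strict inequality) is also sound.
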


Ordered fields satisfy the property that $0<1$ and $1\cdot 0=0$. We
generalized these properties to $e(u(x))<u(x)$ and $e(x)u(x)=e(x)$. Within
solids stronger properties are valid. Indeed, if $x$ and $y$ are arbitrary
elements of a solid, then $e(u(x))<1$ and $e(x)u(y)=e(x)$.

\begin{theorem}
\label{e(x)u(y)=e(x)}Let $S$ be a solid. Let $x\in S$ and $y\in S^{\ast}$.
Then $e\left( x\right) u\left( y\right) =e\left( x\right) $.
\end{theorem}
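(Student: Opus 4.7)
The natural approach is to reduce $u(y)$ to its precise part plus a magnitude via the Expansion of Unity, distribute $e(x)$ through this decomposition using the known distributivity-with-a-magnitude result, and then absorb the resulting correction term into $e(x)$.

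\emph{Step 1.} By Theorem~\ref{Proposition expansion} applied to $y\in S^{\ast }$, write
$$
u(y)=1+e(u(y)).
$$
Multiplying on the left by $e(x)$ gives
$$
e(x)u(y)=e(x)\bigl(1+e(u(y))\bigr).
$$

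\emph{Step 2.} Since $e(u(y))$ is a magnitude, Proposition~\ref{dist y magnitude} applies (the sum $z+e(w)$ being $1+e(u(y))$), so
$$
e(x)u(y)=e(x)\cdot 1+e(x)e(u(y))=e(x)+e(x)e(u(y)),
$$
where $e(x)\cdot 1=e(x)$ since $1$ is the unity.

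\emph{Step 3.} It remains to show the correction term $e(x)e(u(y))$ is absorbed into $e(x)$. Note that $e(x)e(u(y))$ is a magnitude by Axiom~\ref{Axiom escala}, and hence positive by Proposition~\ref{prop zero}.\ref{e(x)maiorzero}. Combining Corollary~\ref{1>e(u(x))}, which asserts $e(u(y))<1$, with Axiom~\ref{Axiom Amplification} (the compatibility of order with multiplication by a magnitude, used exactly as in the proof of Lemma~\ref{xemaioree}), we get $e(u(y))\cdot e(x)\leq 1\cdot e(x)=e(x)$, i.e.\ $e(x)e(u(y))\leq e(x)$. Theorem~\ref{emaiory e+y=e} then yields $e(x)+e(x)e(u(y))=e(x)$, and combining with Step~2 completes the proof.

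\emph{Main obstacle.} The only nontrivial point is Step~3: one must reliably pass from the strict scalar inequality $e(u(y))<1$ to the magnitude inequality $e(x)e(u(y))\leq e(x)$, which is precisely the role of Axiom~\ref{Axiom Amplification}. The degenerate case $e(x)=0$ is not a real obstacle, since then Proposition~\ref{x0=0} (together with commutativity) gives $e(x)u(y)=0=e(x)$ directly; but in fact the argument above covers that case uniformly.
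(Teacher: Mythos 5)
Your proof is correct and follows essentially the same route as the paper's: expand $u(y)=1+e(u(y))$ via Theorem \ref{Proposition expansion}, distribute $e(x)$ over this sum, and absorb $e(x)e(u(y))$ using $e(u(y))<1$ together with Axiom \ref{Axiom Amplification}. The only (harmless) differences are bookkeeping: you invoke Proposition \ref{dist y magnitude} for the distribution step and Theorem \ref{emaiory e+y=e} for the absorption, where the paper cites Corollary \ref{Cor.x(y+e(y))=xy+xe(y)} and Proposition \ref{e(x)maiore(y)iff}; your citations are if anything the more precisely applicable ones.
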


\begin{proof}
By Theorem \ref{Proposition expansion}%
\begin{equation*}
e\left( x\right) u\left( y\right) =e\left( x\right) \left( 1+e\left( u\left(
y\right) \right) \right) \text{.}
\end{equation*}%
Then by Corollary \ref{Cor.x(y+e(y))=xy+xe(y)} 
\begin{equation*}
e\left( x\right) u\left( y\right) =e\left( x\right) 1+e\left( x\right)
e\left( u\left( y\right) \right) =e\left( x\right) +e\left( x\right) e\left(
u\left( y\right) \right) \text{.}
\end{equation*}%
By Corollary \ref{1>e(u(x))} and Axiom \ref{Axiom Amplification} one has $%
e\left( x\right) e\left( u\left( y\right) \right) \leq e\left( x\right) $.
Hence $e\left( x\right) u\left( y\right) =e\left( x\right) $ by Proposition %
\ref{e(x)maiore(y)iff}.
\end{proof}

We are now able to derive the converse to Lemma \ref{Lemma R(x)<=R(y)}.

\begin{lemma}
\label{if...then R(x)<=R(y)}Let $S$ be a solid and let $x,y\in S^{\ast}$. If 
$e\left( x\right) y\leq e\left( y\right) x$ then $R\left( x\right) \leq
R\left( y\right) $.
\end{lemma}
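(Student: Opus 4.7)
The plan is to reverse the calculation in the proof of Lemma \ref{Lemma R(x)<=R(y)}, scaling the hypothesis inequality by $x^{-1}y^{-1}$ to extract the inequality of relative uncertainties directly. Writing $R(x)=e(u(x))=e(x)x^{-1}$ and similarly for $y$ (valid since both elements are zeroless), what we must show is $e(x)x^{-1}\leq e(y)y^{-1}$.

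First I would observe that both sides of the hypothesis $e(x)y\leq e(y)x$ are magnitudes: by Axiom \ref{Axiom escala} there exist elements $w_1,w_2$ with $e(x)y=e(w_1)$ and $e(y)x=e(w_2)$, so Theorem \ref{assembly7} gives $e(x)y=e(e(x)y)$ and $e(y)x=e(e(y)x)$. The hypothesis is therefore an inequality between magnitudes, and Proposition \ref{xe(y)=xe(z)} allows me to scale it on the left by the arbitrary element $x^{-1}y^{-1}$ without worrying about its sign:
\begin{equation*}
x^{-1}y^{-1}\cdot e(x)y\;\leq\;x^{-1}y^{-1}\cdot e(y)x.
\end{equation*}

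Next I would simplify the two sides. Rearranging (using commutativity and associativity of multiplication in the solid), the left-hand side becomes $e(x)x^{-1}u(y)$ and the right-hand side becomes $e(y)y^{-1}u(x)$. Since $e(x)x^{-1}=e(u(x))$ by Axiom \ref{e(u(x))=e(x)d(x)} is a magnitude, and analogously $e(y)y^{-1}$ is a magnitude, Theorem \ref{e(x)u(y)=e(x)} collapses each of these products: $e(x)x^{-1}u(y)=e(x)x^{-1}$ and $e(y)y^{-1}u(x)=e(y)y^{-1}$. Hence the inequality reduces to $e(x)x^{-1}\leq e(y)y^{-1}$, which is precisely $R(x)\leq R(y)$.

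The only delicate point is the scaling step: multiplying a magnitude inequality by an arbitrary (possibly negative) element must preserve the order, and this is exactly the content of Proposition \ref{xe(y)=xe(z)}, so no case split on signs is needed. Everything else is bookkeeping via Theorem \ref{e(x)u(y)=e(x)} and the representation $R(\cdot)=e(\cdot)\cdot(\cdot)^{-1}$ for zeroless elements. I do not foresee a substantive obstacle; the proof should be short.
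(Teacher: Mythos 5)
Your proof is correct and follows the same basic computation as the paper's: scale the hypothesis $e(x)y\leq e(y)x$ by $x^{-1}y^{-1}$, rearrange to $e(x)x^{-1}u(y)\leq e(y)y^{-1}u(x)$, and collapse both sides with Theorem \ref{e(x)u(y)=e(x)} to obtain $e(u(x))\leq e(u(y))$. The one genuine difference lies in how the scaling step is justified. The paper first assumes without loss of generality that $x$ and $y$ are positive, so that $x^{-1}y^{-1}$ is positive and Axiom \ref{compat mult} applies. You instead observe that both sides of the hypothesis are magnitudes (Axiom \ref{Axiom escala} together with Theorem \ref{assembly7}), so that Proposition \ref{xe(y)=xe(z)} lets you multiply by $x^{-1}y^{-1}$ regardless of its sign. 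This is a legitimate application of that proposition, and it buys you something real: you avoid the sign reduction entirely, whereas the paper's ``without restriction of generality'' silently relies on facts such as $e(y)(-x)=e(y)x$ and $R(-x)=R(x)$ that are not spelled out there. Your route is, if anything, slightly cleaner; the remaining bookkeeping (commutativity, $e(x)x^{-1}=e(u(x))$ via Axiom \ref{e(u(x))=e(x)d(x)}, and the fact that $x^{-1},y^{-1}$ are zeroless) is handled correctly.
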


\begin{proof}
Suppose that $e\left( x\right) y\leq e\left( y\right) x$. Without
restriction of generality we may assume that $x$ and $y$ are positive. Then $%
e\left( x\right) u\left( y\right) x^{-1}\leq e\left( y\right) y^{-1}u\left(
x\right) $ by Axiom \ref{compat mult}. By Theorem \ref{e(x)u(y)=e(x)} one
has $e\left( x\right) x^{-1}\leq e\left( y\right) y^{-1}$. Hence $R\left(
x\right) \leq R\left( y\right) $.
\end{proof}

\bigskip

\begin{proof}[Proof of the sufficiency of the condition for distributivity
of Theorem \protect \ref{dist chacterization}]
We only need to consider the zeroless case. Assume that $xy+xz=x\left(
y+z\right) $. With this equality, applying cancellation to Axiom \ref{Axiom
distributivity} gives 
\begin{equation*}
e\left( x\left( y+z\right) \right) =e\left( x\left( y+z\right) \right)
+e\left( x\right) y+e\left( x\right) z.
\end{equation*}%
Then 
\begin{equation}
e\left( x\right) y+e\left( x\right) z\leq e\left( x\left( y+z\right) \right)
=e\left( x\right) \left( y+z\right) +xe\left( y\right) +xe\left( z\right) .
\label{desig distr}
\end{equation}%
We consider three cases: (i) $e\left( x\right) y+e\left( x\right) z\leq
e\left( x\right) \left( y+z\right) $, and if (i) does not hold, (ii) $%
e\left( x\right) y+e\left( x\right) z\leq xe\left( y\right) $ and (iii) $%
e\left( x\right) y+e\left( x\right) z\leq xe\left( z\right) .$

(i) One has $e\left( x\right) y+e\left( x\right) z=e\left( x\right) \left(
y+z\right) $ by (\ref{desig distr}) and Theorem \ref{subdist}.

(ii) Lemma \ref{lemma e(x)y=e(x)z} implies that $e\left( x\right) y=e\left(
x\right) z$. Then $e\left( x\right) y\leq xe\left( y\right) $. Hence $%
R\left( x\right) \leq R\left( y\right) \leq R\left( y\right) +R\left(
z\right) $ by Lemma \ref{if...then R(x)<=R(y)}.

(iii) This case is analogous to case (ii).
\end{proof}

This completes the proof of Theorem \ref{dist chacterization}.

\subsection{Equivalent form of the distributivity axiom\label{Subsection
equivalent form}}

We show that the distributivity condition of Axiom \ref{Axiom distributivity}
and formula (\ref{dist charac}) of Theorem \ref{dist chacterization} are
equivalent. In order to do so we need a property of subdistributivity as
well as some special cases of the distributive law. All these properties are
supposed to be shown without using Axiom \ref{Axiom distributivity}, but in
the presence of (\ref{dist charac}).

\begin{proposition}
\label{lemma p eqv axioms}Let $x,y,z\in S$. Suppose (\ref{dist charac})
holds. Then $e\left( x\right) \left( y+z\right) \leq e\left( x\right)
y+e\left( x\right) z$.
\end{proposition}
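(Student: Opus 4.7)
The plan is to apply the distributivity criterion (\ref{dist charac}) to the triple $(e(x), y, z)$. Since $e(x)$ is a magnitude, Proposition \ref{theoremassembly}.\ref{assembly3} gives $e(e(x)) = e(x)$, and by definition $R(e(x)) = M$. Substituting into (\ref{dist charac}), the first disjunct on the right-hand side becomes identical to the equation $e(x)(y+z) = e(x)y + e(x)z$ on the left-hand side, so the resulting biconditional has the logical form $P \Leftrightarrow P \vee Q$, where $Q$ is the condition $M \leq R(y) + R(z)$; classically this is equivalent to the single implication $Q \Rightarrow P$.

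By absorption of magnitudes (Proposition \ref{theoremassembly}.\ref{assembly0}), $R(y) + R(z)$ coincides with the larger of $R(y)$ and $R(z)$, which is $\leq M$ always and equals $M$ precisely when $R(y)=M$ or $R(z)=M$, i.e.\ when $y$ or $z$ is itself a magnitude. Thus the criterion immediately yields the stronger equality $e(x)(y+z) = e(x)y + e(x)z$ whenever one of $y,z$ is a magnitude, and in that case the desired inequality follows at once.

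For the remaining case where both $y$ and $z$ are zeroless, I would argue by contradiction. Supposing $e(x)y + e(x)z < e(x)(y+z)$, absorption among the magnitudes $e(x)y, e(x)z$ gives (after relabelling) $e(x)z \leq e(x)y < e(x)(y+z)$. I would then apply (\ref{dist charac}) to the triple $(x,y,z)$ itself and branch on whether the distributivity $xy+xz = x(y+z)$ holds: in each branch the criterion pins down a precise relationship between $R(x)$ and $R(y)+R(z)$, which via Lemma \ref{Lemma R(x)<=R(y)}, Axiom \ref{Axiom e(xy)=e(x)y+e(y)x}, and the auxiliary equality $xy + xe(z) = x(y+e(z))$ (itself an instance of (\ref{dist charac}) for $(x,y,e(z))$, whose second disjunct $R(x) \leq R(y)+R(e(z)) = M$ holds trivially) can be driven against the strict inequality $e(x)y < e(x)(y+z)$ to reach a contradiction.

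The main obstacle is exactly this zeroless subcase: since the proofs of Theorem \ref{subdist} and Lemma \ref{lemma e(x)y=e(x)z} in the main text rely on Axiom \ref{Axiom distributivity}, neither is available here, so the contradiction must be extracted purely from (\ref{dist charac}), absorption of magnitudes, Axiom \ref{Axiom e(xy)=e(x)y+e(y)x}, and the special cases of distributivity already proved in Subsection \ref{Subsection elementary cases}.
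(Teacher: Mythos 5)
Your reduction of the criterion applied to the triple $(e(x),y,z)$ to the single implication $M\leq R(y)+R(z)\Rightarrow e(x)(y+z)=e(x)y+e(x)z$ is correct and disposes of the case where $y$ or $z$ is a magnitude, but the remaining case --- both $y$ and $z$ zeroless --- is where the entire content of the proposition lies, and your sketch for it does not close. Applying (\ref{dist charac}) to $(x,y,z)$ and branching on whether $xy+xz=x(y+z)$ gives no leverage: in the branch where distributivity fails, the criterion only tells you that $R(y)+R(z)<R(x)$ and $e(x)(y+z)\neq e(x)y+e(x)z$, and via Lemma \ref{Lemma R(x)<=R(y)} the first of these yields $xe(y)\leq e(x)y$ and $xe(z)\leq e(x)z$ --- inequalities that are perfectly consistent with the assumed $e(x)y+e(x)z<e(x)(y+z)$, so no contradiction is reached. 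The real difficulty is not an ``almost opposite'' configuration at all: when $y$ and $z$ are zeroless of the same sign the criterion is silent, and one needs a genuinely new upper bound on $e(x)(y+z)$.

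The missing ingredient is the doubling identity $e(x)(z+z)=e(x)z$, equivalently $e(x)(1+1)=e(x)$. The paper first reduces, assuming $\left\vert y\right\vert \leq\left\vert z\right\vert$, to $e(x)(y+z)\leq e(x)(z+z)$ (the opposite-sign case being immediate from Axiom \ref{Axiom Amplification}); it then writes $z=p+e(z)$ with $p$ precise (Axiom \ref{numexterno}) and proves $e(x)(p+p)=e(x)p$ by combining Lemma \ref{Lemma precise} (distributivity for a precise factor, which does follow from (\ref{dist charac})) with the separation Axiom \ref{scheiding neutrices}: if $e(x)<e(x)(1+1)$ one could insert a precise $q$ strictly between them and derive a contradiction. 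Your proposal never invokes Axiom \ref{scheiding neutrices} nor the decomposition into a precise part plus a magnitude, and without some such substitute the inequality $e(x)(y+z)\leq e(x)y+e(x)z$ cannot be extracted from (\ref{dist charac}), absorption of magnitudes, and Axiom \ref{Axiom e(xy)=e(x)y+e(y)x} alone.
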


The proposition is equivalent to Axiom \ref{Axiom distributivity} in case $%
x=e(x)$, for then it takes the form 
\begin{equation*}
e\left( x\right) y+e\left( x\right) z=e\left( x\right) \left( y+z\right)
+e\left( x\right) y+e\left( x\right) z\text{.}
\end{equation*}%
We now present the special cases of the distributive law needed to prove the
proposition.

\begin{lemma}
\label{Lemma precise}Let $x,y,z\in S$. If $x$ is precise, then $x\left(
y+z\right) =xy+xz$.
\end{lemma}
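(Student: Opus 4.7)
The plan is to apply the distributivity criterion (\ref{dist charac}) of Theorem \ref{dist chacterization} directly, verifying one of its two disjuncts from the fact that $x$ precise means $e(x) = 0$. I would split the argument into the subcases $x \neq 0$ and $x = 0$, since the ``natural'' disjunct to use differs between them.

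In the subcase $x \neq 0$, the unity $u(x)$ is defined, and Proposition \ref{precise closed} shows that $u(x)$ is itself precise. Hence $R(x) = e(u(x)) = 0$, and Axiom \ref{Axiom neut min} immediately yields $R(x) = 0 \leq R(y) + R(z)$. This is exactly the second disjunct of (\ref{dist charac}), so Theorem \ref{dist chacterization} delivers $x(y+z) = xy + xz$.

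In the subcase $x = 0$, the second disjunct need not hold, since by definition $R(0) = M$, the largest magnitude. I would therefore verify the first disjunct $e(x)(y+z) = e(x)y + e(x)z$ instead. Substituting $e(x) = 0$, this reduces to $0\cdot (y+z) = 0\cdot y + 0\cdot z$. Each product $0 \cdot w$ is a magnitude by Axiom \ref{Axiom escala}, and the absorbing property of $0$ for multiplication (the companion of Proposition \ref{x0=0}, as used already in the proof of Proposition \ref{Prop dist a(x+y)}) gives $0\cdot w = 0$. Thus both sides of the first disjunct are $0$, and Theorem \ref{dist chacterization} again concludes $x(y+z) = xy + xz$.

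I do not expect a significant obstacle: once the criterion (\ref{dist charac}) is available, the lemma is essentially a corollary. The only point deserving care is that the case $x = 0$ must be routed through the first disjunct and the absorbing character of zero, rather than through the relative uncertainty comparison that handles every other precise element uniformly.
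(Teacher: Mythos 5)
Your proof is correct. The paper's own proof is shorter and uniform: since $x$ precise means $e(x)=0$, Proposition \ref{x0=0} gives $e(x)(y+z)=0=e(x)y+e(x)z$ directly, so the first disjunct of (\ref{dist charac}) holds for \emph{every} precise $x$, zero or not, and no case split is needed. You instead treat $x\neq 0$ via the second disjunct, computing $R(x)=e(u(x))=0\leq R(y)+R(z)$ (which is fine: $u(x)$ exists and is precise by Proposition \ref{precise closed}, and $0\leq e(w)$ for any magnitude by Proposition \ref{prop zero}), and reserve the first disjunct for $x=0$. This works and introduces no circularity with Axiom \ref{Axiom distributivity}, but the case distinction is superfluous: the first-disjunct verification depends only on $e(x)=0$, not on $x$ itself, so the argument you give for $x=0$ already covers all precise $x$. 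The only thing your detour ``buys'' is a reminder that $R$ behaves discontinuously at magnitudes ($R(0)=M$), which is worth knowing but not needed here.
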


\begin{proof}
By Proposition \ref{x0=0} one has $e\left( x\right) \left( y+z\right)
=0\left( y+z\right) =0$ and $e\left( x\right) y+e\left( x\right) z=0+0=0$.
Hence $e\left( x\right) \left( y+z\right) =e\left( x\right) y+e\left(
x\right) z$ and one concludes that $x\left( y+z\right) =xy+xz$ by (\ref{dist
charac}).
\end{proof}

\begin{lemma}
\label{e(x)(p+p)}Let $x\in S$. Let $p\in S$ be precise. Then $e\left(
x\right) \left( p+p\right) =e\left( x\right) p$.
\end{lemma}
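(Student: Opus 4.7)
My plan has two steps: a reduction, then a magnitude-level computation.

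For the reduction, the case $p = 0$ is immediate from Proposition \ref{x0=0}, so I would assume $p$ is precise and nonzero. Applying Lemma \ref{Lemma precise} to the precise element $p$ with arguments $1$ and $1$ gives $p + p = p(1+1)$. By associativity and commutativity,
\[
e(x)(p+p) = (e(x)\,p)(1+1).
\]
Writing $N := e(x)\,p$, a magnitude by Axiom \ref{Axiom escala}, the claim becomes $N(1+1) = N$.

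To establish $N(1+1) = N$, I would combine the idempotency $N + N = N$ (Proposition \ref{theoremassembly}.\ref{assembly1}) with a distributivity step $N(1+1) = N + N$. This second step is the main obstacle. The natural application of (\ref{dist charac}) with $x' = N$ and $y = z = 1$ is circular: since $e(N) = N$, its first condition merely restates the goal, while its second condition $R(N) \leq 0$ is false because $R(N) = M$. Lemma \ref{Lemma precise} is also inapplicable, as it requires a precise first factor. The equality $N(1+1) = N + N$ should instead follow from the order-theoretic structure, most plausibly via Axiom \ref{Axiom Amplification} together with Proposition \ref{xe(y)=xe(z)}, sandwiching $N(1+1)$ between $N$ and $N$ by exploiting the fact that multiplication by a positive precise element preserves the magnitude class.
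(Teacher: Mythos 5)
Your reduction is sound and matches the paper's strategy in spirit: the case $p=0$ is trivial, Lemma \ref{Lemma precise} gives $p+p=p(1+1)$, and everything comes down to showing $N(1+1)=N$ for the magnitude $N=e(x)p$. Your diagnosis of why (\ref{dist charac}) is circular here is also correct: for a magnitude the first disjunct restates the goal and the second fails since $R(N)=M\not\leq 0$. But the proposal stops exactly where the real work begins. The tools you name deliver only the easy inequality: Axiom \ref{Axiom Amplification} with $e(1)=0\leq 1\leq 1+1$ gives $N=N\cdot 1\leq N(1+1)$, and Proposition \ref{xe(y)=xe(z)} is about comparing $xe(y)$ with $xe(z)$ for magnitudes $e(y)\leq e(z)$, which does not apply since $1+1$ is not a magnitude. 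Neither yields the upper bound $N(1+1)\leq N$, and there is no ``sandwich'': a priori $N(1+1)$ could be a strictly larger magnitude, and nothing in the order-compatibility axioms alone rules this out.

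The missing idea is Axiom \ref{scheiding neutrices}. The paper supposes $N<N(1+1)$, uses that axiom to interpose a non-magnitude $q$ with $N<q<N(1+1)$, and then argues that $N<q/(1+1)$ --- otherwise Lemma \ref{Lemma precise} applied to the precise element $q/(1+1)$ would give $q=q/(1+1)+q/(1+1)\leq N+N=N$ --- whence $N(1+1)\leq q$, contradicting $q<N(1+1)$. This forces $N(1+1)\leq N$ and hence equality. Without invoking the separation axiom (or some equivalent density argument), the step $N(1+1)=N+N$ that your plan hinges on remains unproved, so the proposal has a genuine gap at its central claim.
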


\begin{proof}
By Axiom \ref{Axiom Amplification} one has $e\left( x\right) \leq e\left(
x\right) \left( 1+1\right) $. Suppose that $e\left( x\right) <e\left(
x\right) (1+1)$. By Axiom \ref{scheiding neutrices} there exists a precise
element $q$ such that $e\left( x\right) <q<e\left( x\right) \left(
1+1\right) $. Then $e\left( x\right) <q/\left( 1+1\right) $, otherwise by
Lemma \ref{Lemma precise} one would have $q=q/\left( 1+1\right) +q/\left(
1+1\right) \leq e\left( x\right) +e\left( x\right) =e\left( x\right) $.
Hence $e\left( x\right) \left( 1+1\right) \leq q$, a contradiction. Then $%
e\left( x\right) \left( 1+1\right) \leq e\left( x\right) $ and one concludes
that $e\left( x\right) \left( 1+1\right) =e\left( x\right) $. Then $e\left(
x\right) \left( p+p\right) =e\left( x\right) p$ by Lemma \ref{Lemma precise}.
\end{proof}

\begin{lemma}
\label{Lemma e(x)(p+e(y))}Let $x,y\in S$. Let $p\in S$ be precise. Then $%
x\left( p+e\left( y\right) \right) =xp+xe\left( y\right) $.
\end{lemma}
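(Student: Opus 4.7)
The plan is to apply the distributivity criterion (\ref{dist charac}) directly to the triple $(x, p, e(y))$ and verify that the second disjunct $R(x) \leq R(p) + R(e(y))$ is satisfied unconditionally. The key observation is that $e(y)$ is a magnitude, so by the definition of relative uncertainty one has $R(e(y)) = M$, where $M$ is the largest magnitude guaranteed by Axiom \ref{Axiom neut max}.

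From here, since $R(p)$ is a magnitude and every magnitude is bounded above by $M$, Proposition \ref{e(x)maiore(y)iff} (whose proof does not invoke Axiom \ref{Axiom distributivity}) gives $R(p) + M = M$, so $R(p) + R(e(y)) = M$. On the other hand, for any element $x \in S$, $R(x)$ is either $e(u(x))$ or $M$, and in either case $R(x) \leq M$. Hence
\begin{equation*}
R(x) \leq M = R(p) + R(e(y)).
\end{equation*}

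With this inequality in hand, (\ref{dist charac}) immediately yields $x(p + e(y)) = xp + x e(y)$, which is the desired conclusion.

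There is essentially no obstacle here: the whole point of the argument is recognizing that, because one of the two summands is a magnitude, its relative uncertainty is maximal, and so the ``sharpness cuts'' clause of the criterion is satisfied trivially. The only mild care needed is to check that the auxiliary facts about magnitudes used (namely Proposition \ref{e(x)maiore(y)iff} and the maximality of $M$) are available without appealing to Axiom \ref{Axiom distributivity}, which is indeed the case since they belong to the purely order-theoretic and assembly-theoretic machinery of Section \ref{Algebraic properties of magnitudes}.
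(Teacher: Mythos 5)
Your proof is correct and is essentially the paper's own argument: the paper's proof reads, in full, ``Immediately from (\ref{dist charac}), for $R\left( x\right) \leq R\left( p\right) +R\left( e\left( y\right) \right) =M$,'' and you simply spell out the same verification (that $R(e(y))=M$ because $e(y)$ is a magnitude, and that $R(x)\leq M$ always).
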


\begin{proof}
Immediately from (\ref{dist charac}), for $R\left( x\right) \leq R\left(
p\right) +R\left( e\left( y\right) \right) =M$.
\end{proof}

\begin{proof}[Proof of Proposition \protect \ref{lemma p eqv axioms}]
Suppose without loss of generality that $\left \vert y\right \vert \leq
\left \vert z\right \vert $. If $y$ and $z$ have opposite signs then%
\begin{equation*}
e\left( x\right) \left( y+z\right) \leq e\left( x\right) z\leq e\left(
x\right) y+e\left( x\right) z.
\end{equation*}%
If $y$ and $z$ have the same sign we may assume that they are both positive
by Proposition \ref{es=ex}. Let $z=p+e\left( z\right) $. Then by Lemma \ref%
{Lemma e(x)(p+e(y))} and Lemma \ref{e(x)(p+p)} 
\begin{eqnarray*}
e\left( x\right) \left( y+z\right) &\leq &e\left( x\right) \left( z+z\right)
\\
&=&e\left( x\right) (\left( p+p\right) +e\left( z\right) ) \\
&=&e\left( x\right) \left( p+p\right) +e\left( x\right) e\left( z\right) \\
&=&e\left( x\right) p+e\left( x\right) e\left( z\right) =e\left( x\right)
z\leq e\left( x\right) y+e\left( x\right) z.
\end{eqnarray*}
\end{proof}

\begin{theorem}
Let $x,y,z\in S$. Then (\ref{dist charac}) holds if and only if $%
xy+xz=x\left( y+z\right) +e\left( x\right) y+e\left( x\right) z$.
\end{theorem}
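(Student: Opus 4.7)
The statement asserts the equivalence of (\ref{dist charac}) and the full distributivity equation $xy+xz = x(y+z) + e(x)y + e(x)z$, which is precisely Axiom \ref{Axiom distributivity}.

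The direction from the equation to (\ref{dist charac}) is immediate: if the equation holds, then Axiom \ref{Axiom distributivity} is available, and so Theorem \ref{dist chacterization} applies directly to give (\ref{dist charac}).

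For the converse, assume (\ref{dist charac}). The plan is first to match the magnitudes of both sides of the target equation, and then leverage (\ref{dist charac}) itself via case analysis to close the gap. From Axiom \ref{Axiom e(xy)=e(x)y+e(y)x} I obtain $e(xy+xz) = e(x)y + xe(y) + e(x)z + xe(z)$. For the other side, using Axiom \ref{Axiom e(xy)=e(x)y+e(y)x} together with Proposition \ref{dist neutrices} (which handles $xe(y+z) = xe(y) + xe(z)$), I get
\[
e(x(y+z) + e(x)y + e(x)z) = e(x)(y+z) + xe(y) + xe(z) + e(x)y + e(x)z.
\]
The key intermediate is Proposition \ref{lemma p eqv axioms}, which is available since (\ref{dist charac}) is assumed, and gives $e(x)(y+z) \le e(x)y + e(x)z$; by absorption (Proposition \ref{e(x)maiore(y)iff}) the two magnitudes therefore coincide.

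Next, I split into cases according to whether the right-hand disjunction of (\ref{dist charac}) holds. If it does, (\ref{dist charac}) yields $xy+xz = x(y+z)$; combined with the equality of magnitudes just established, adding $e(x)y + e(x)z$ to $x(y+z)$ leaves it unchanged (these terms sit inside $e(x(y+z))$), so $x(y+z) + e(x)y + e(x)z = x(y+z) = xy+xz$, as required. The main obstacle is the opposite case, where neither disjunct holds and $xy+xz \ne x(y+z)$; here I expect to exploit Lemma \ref{lemma e(x)y=e(x)z}, Lemma \ref{Lemma R(x)<=R(y)}, and the cancellation law (Theorem \ref{Cancelation law}) to show that the precise discrepancy between $xy+xz$ and $x(y+z)$ is absorbed exactly by $e(x)y + e(x)z$, yielding the desired equation.
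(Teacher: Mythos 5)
Your first direction and your magnitude computation are fine, and the case where the disjunction of (\ref{dist charac}) holds is correctly closed. But the converse is not proved: the case you yourself call ``the main obstacle'' --- where neither disjunct holds and $xy+xz\neq x\left( y+z\right) $ --- is left as an expectation rather than an argument, and this is precisely where the content of the theorem lies. Knowing that $xy+xz$ and $x\left( y+z\right) +e\left( x\right) y+e\left( x\right) z$ have the same magnitude does not make them equal; you would still have to control the precise parts, and it is not clear how Lemma \ref{lemma e(x)y=e(x)z}, Lemma \ref{Lemma R(x)<=R(y)} and Theorem \ref{Cancelation law} would do that. Worse, in this direction you may only use consequences of (\ref{dist charac}), and Lemma \ref{lemma e(x)y=e(x)z} is proved in the paper via Theorem \ref{subdist}, whose proof invokes Axiom \ref{Axiom distributivity} --- exactly the statement you are trying to derive --- so appealing to it here is circular unless you first re-derive it from (\ref{dist charac}) alone.

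The paper's proof avoids any case split on (\ref{dist charac}). It writes $x=a+e\left( x\right) $ with $a$ precise (Axiom \ref{numexterno}), expands $x\left( y+z\right) =a\left( y+z\right) +e\left( x\right) \left( y+z\right) $ by Lemma \ref{Lemma e(x)(p+e(y))}, absorbs $e\left( x\right) \left( y+z\right) $ into $e\left( x\right) y+e\left( x\right) z$ by Proposition \ref{lemma p eqv axioms}, splits $a\left( y+z\right) =ay+az$ by Lemma \ref{Lemma precise}, and finally recombines $ay+e\left( x\right) y=xy$ and $az+e\left( x\right) z=xz$ by Lemma \ref{Lemma e(x)(p+e(y))} again; all of these auxiliary statements are established from (\ref{dist charac}) without Axiom \ref{Axiom distributivity}. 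You should either adopt this decomposition or supply a complete, non-circular argument for your remaining case.
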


\begin{proof}
By Theorem \ref{dist chacterization} we only need to prove the necessary
part. Suppose (\ref{dist charac}) holds. By Axiom \ref{numexterno} there is
a precise number $a$ such that $x=a+e\left( x\right) $. By Lemma \ref{Lemma
e(x)(p+e(y))}%
\begin{equation*}
x\left( y+z\right) +e\left( x\right) y+e\left( x\right) z=a\left( y+z\right)
+e\left( x\right) \left( y+z\right) +e\left( x\right) y+e\left( x\right) z.
\end{equation*}%
By Proposition \ref{lemma p eqv axioms} and Lemma \ref{Lemma precise} 
\begin{eqnarray*}
x\left( y+z\right) +e\left( x\right) y+e\left( x\right) z &=&a\left(
y+z\right) +e\left( x\right) y+e\left( x\right) z \\
&=&ay+az+e\left( x\right) y+e\left( x\right) z.
\end{eqnarray*}%
Then by Lemma \ref{Lemma e(x)(p+e(y))} 
\begin{equation*}
x\left( y+z\right) +e\left( x\right) y+e\left( x\right) z=\left( a+e\left(
x\right) \right) y+\left( a+e\left( x\right) \right) z=xy+xz\text{.}
\end{equation*}
\end{proof}

\appendix

\pagebreak

\begin{center}
\textsc{Appendix:\ List of axioms}
\end{center}

\label{Section Axioms}

\begin{enumerate}
\item \textbf{Axioms for addition}

\begin{axiom}
\label{assemblyassoc}$\forall x\forall y\forall z(x+\left( y+z\right)
=\left( x+y\right) +z).$
\end{axiom}

\begin{axiom}
\label{assemblycom}$\forall x\forall y(x+y=y+x).$
\end{axiom}

\begin{axiom}
\label{assemblyneut}$\forall x\exists e\left( x+e=x\wedge \forall f\left(
x+f=x\rightarrow e+f=e\right) \right) .$
\end{axiom}

\begin{axiom}
\label{assemblysim}$\forall x\exists s\left( x+s=e\left( x\right) \wedge
e\left( s\right) =e\left( x\right) \right) .$
\end{axiom}

\begin{axiom}
\label{assemblye(xy)}$\forall x\forall y\left( e\left( x+y\right) =e\left(
x\right) \vee e\left( x+y\right) =e\left( y\right) \right) .$
\end{axiom}

\item \textbf{Axioms for multiplication}

\begin{axiom}
\label{axiom assoc mult}$\forall x\forall y\forall z(x\left( yz\right)
=\left( xy\right) z).$
\end{axiom}

\begin{axiom}
\label{axiom com mult}$\forall x\forall y(xy=yx).$
\end{axiom}

\begin{axiom}
\label{axiom neut mult}$\forall x\neq e\left( x\right) \exists u\left(
xu=x\wedge \forall v\left( xv=x\rightarrow uv=u\right) \right) .$
\end{axiom}

\begin{axiom}
\label{axiom sym mult}$\forall x\neq e\left( x\right) \exists d\left(
xd=u\left( x\right) \wedge u\left( d\right) =u\left( x\right) \right) .$
\end{axiom}

\begin{axiom}
\label{axiom u(xy)}$\forall x\neq e\left( x\right) \forall y\neq e\left(
y\right) \left( u\left( xy\right) =u\left( x\right) \vee u\left( xy\right)
=u\left( y\right) \right) .$
\end{axiom}

\item \textbf{Order axioms}

\begin{axiom}
\label{(OA)reflex}$\forall x(x\leq x).$
\end{axiom}

\begin{axiom}
\label{(OA)antisym}$\forall x\forall y(x\leq y\wedge y\leq x\rightarrow
x=y). $
\end{axiom}

\begin{axiom}
\label{(OA)trans}$\forall x\forall y\forall z(x\leq y\wedge y\leq
z\rightarrow x\leq z).$
\end{axiom}

\begin{axiom}
\label{(OA)total}$\forall x\forall y(x\leq y\vee y\leq x).$
\end{axiom}

\begin{axiom}
\label{(OA)compoper}$\forall x\forall y\forall z\left( x\leq y\rightarrow
x+z\leq y+z\right) .$
\end{axiom}

\begin{axiom}
\label{Axiom e(x)maiory}$\forall x\forall y\left( y+e(x)=e(x)\rightarrow
\left( y\leq e(x)\wedge -y\leq e(x)\right) \right) .$
\end{axiom}

\begin{axiom}
\label{compat mult}$\forall x\forall y\forall z\left( \left( e\left(
x\right) <x\wedge y\leq z\right) \rightarrow xy\leq xz\right) .$
\end{axiom}

\begin{axiom}
\label{Axiom Amplification}$\forall x\forall y\forall z\left( \left( e\left(
y\right) \leq y\leq z\right) \rightarrow e\left( x\right) y\leq e\left(
x\right) z\right) .$
\end{axiom}

\item \textbf{Axioms relating addition and multiplication}

\begin{axiom}
\label{Axiom escala}$\forall x\forall y\exists z(e(x)y=e(z)).$
\end{axiom}

\begin{axiom}
\label{Axiom e(xy)=e(x)y+e(y)x}$\forall x\forall y\left(
e(xy)=e(x)y+e(y)x\right) .$
\end{axiom}

\begin{axiom}
\label{e(u(x))=e(x)d(x)}$\forall x\neq e(x)\left( e(u(x))=e(x)/x\right) .$
\end{axiom}

\begin{axiom}
\label{Axiom distributivity}$\forall x\forall y\forall z\left( xy+xz=x\left(
y+z\right) +e\left( x\right) y+e\left( x\right) z\right) .$
\end{axiom}

\begin{axiom}
\label{Axiom s(xy)=s(x)y}$\forall x\forall y\left( -(xy)=(-x)y\right) .$
\end{axiom}

\item \textbf{Axioms of existence}

\begin{axiom}
\label{Axiom neut min}$\exists m\forall x\left( m+x=x\right) .$
\end{axiom}

\begin{axiom}
\label{Axiom neut mult}$\exists u\forall x\left( ux=x\right) .$
\end{axiom}

\begin{axiom}
\label{Axiom neut max}$\exists M\forall x(e\left( x\right) +M=M).$
\end{axiom}

\begin{axiom}
\label{existencia neutrices}$\exists x\left( e\left( x\right) \neq 0\wedge
e\left( x\right) \neq M\right) .$
\end{axiom}

\begin{axiom}
\label{numexterno}$\forall x\exists a\left( x=a+e\left( x\right) \wedge
e\left( a\right) =0\right) .$
\end{axiom}

\begin{axiom}
\label{scheiding neutrices}$\forall x\forall y(x=e\left( x\right) \wedge
y=e(y)\wedge x<y\rightarrow \exists z(z\neq e(z)\wedge x<z<y)).$
\end{axiom}
\end{enumerate}

\end{document}